\newtheorem{thm}{Theorem}[section]
\newtheorem{lemma}[thm]{Lemma}
\newtheorem{prop}[thm]{Proposition}
\newtheorem{problem}[thm]{Problem}
\theoremstyle{definition}
\newtheorem{remark}[thm]{Remark}
\theoremstyle{definition}
\newtheorem{dfn}[thm]{Definition}
\theoremstyle{definition}
\newtheorem{example}[thm]{Example}
\def\C{\mathbb{C}}
\def\R{\mathbb{R}}
\def\Q{\mathbb{Q}}
\def\II{{\overline{I}}}
\def\P{{\mathbb P}}
\def\F{{\mathbb F}}
\def\<{{\langle}}
\def\>{{\rangle}}
\def\AA{{\overline{A}}}
\begin{document}

\def\ch{{[2k]\choose k}}
\def\rk{{\mathrm{rk}}}
\def\A{{\mathcal{A}}}
\def\S{{\mathcal{S}}}
\newcommand{\LR}[1]{\mathcal{LR}([0,#1])}
\def\V{{\mathcal{V}}}
\def\W{{\mathcal{W}}}
\def\WW{{\widetilde{\mathcal{W}}}}

\def\WO{{\mathcal{W}^\circ}}
\def\Wab{{\WO_{a,b}}}

\def\U{{\mathcal{U}}}
\def\UO{{\mathcal{U}^\circ}}
\def\UW{{\UO(\WO)}}

\def\D{{\mathcal{D}}}
\def\DO{{\mathcal{D}^\circ}}
\def\DW{{\DO(\WO)}}

\def\L{{\mathcal{L}}}
\def\X{{\mathcal{X}}}
\def\Y{{\mathcal{Y}}}
\def\LO{{\mathcal{L}^\circ}}
\def\R{{\mathcal{R}}}
\def\RO{{\mathcal{R}^\circ}}

\def\JR{{\overrightarrow{J}}}
\def\JL{{\overleftarrow{J}}}
\def\I{{\mathcal{I}}}

\def\P{{\mathcal{P}}}
\def\D{{\mathcal{D}}}
\def\F{{\mathcal{F}}}
\def\M{{\mathcal{M}}}
\def\CC{{\mathcal{C}}}
\def\C{{\mathcal{C}}}
\def\B{{\mathcal{B}}}
\def\inn{{\mathrm{in}}}
\def\outt{{\mathrm{out}}}
\def\sp{{\sigma^{-1}\pi}}
\def\QQ{{\widetilde{Q}}}

\def\pii{\pi^{:}}
\def\col{{\mathrm{col}}}

\def\Ired{{\I^\circ}}
\def\act{{\mathrm{N}}}
\def\Sfrak{{\mathfrak{S}}}
\def\Alignments{\mathrm{al}}

\def\It{{\widetilde{I}}}
\def\Jt{{\widetilde{J}}}
\def\Pt{{{P}}}
\def\Qt{{{Q}}}
\def\Rt{{\widetilde{R}}}
\def\St{{\widetilde{S}}}

\title[Weak Separation, Pure Domains and Cluster Distance]{Weak Separation, Pure Domains and Cluster Distance}

\author{Miriam Farber}
\author{Pavel Galashin}
\email{mfarber@mit.edu\\
galashin@mit.edu}
\thanks{The work of the first author was supported by the National Science Foundation Graduate Research Fellowship under Grant No. 1122374}

\address{Department of Mathematics, Massachusetts Institute of Technology, Cambridge MA, USA.}
\keywords{Weak separation, purity conjecture, cluster distance, cluster algebra}

\subjclass[2010]{05E99(primary), 
13F60(secondary)
}
\date\today
\maketitle

\begin{abstract}
Following the proof of the purity conjecture for weakly separated collections, recent years have revealed a variety of wider examples of purity in different settings. In this paper we consider the collection $\mathcal A_{I,J}$ of sets that are weakly separated from two fixed sets $I$ and $J$. We show that all maximal by inclusion weakly separated collections $\mathcal W\subset\mathcal A_{I,J}$ are also maximal by size, provided that $I$ and $J$ are sufficiently ``generic''. We also give a simple formula for the cardinality of $\mathcal W$ in terms of $I$ and $J$. We apply our result to calculate the cluster distance and to give lower bounds on the mutation distance between cluster variables in the cluster algebra structure on the coordinate ring of the Grassmannian. Using a linear projection that relates weak separation to the octahedron recurrence, we also find the exact mutation distances and cluster distances for a family of cluster variables.
\end{abstract}

\section{Introduction}

In 1998, Leclerc and Zelevinsky introduced the notion of weakly separated collections while studying quasicommuting families of quantum minors (see \cite{LZ}). They raised the ``purity conjecture'', which states that all maximal by inclusion collections of pairwise weakly separated subsets of $[n]:=\{1,2\ldots,n\}$ have the same cardinality. This conjecture was proven independently in \cite{P} and \cite{DKK10}. Since then, it motivated the search for other instances of the purity phenomenon. Such instances have been found in \cite{DKK14} using a novel geometric-combinatorial model called \emph{combined tilings}. Furthermore, the work of \cite{P} showed that all maximal weakly separated collections can be obtained from each other by a sequence of mutations. It was also shown by Scott~\cite{Scott,Scott2} that collections of Pl\"ucker coordinates labeled by maximal weakly separated collections form clusters in the cluster algebra structure on the coordinate ring of the Grassmannian.

In this paper, we study a new instance of the purity phenomenon for the collection $\A_{I,J}$ associated with pairs $I,J$ of subsets of $[n]$. Such a collection arises naturally when one studies a notion of cluster distance between two cluster variables, and specifically variables that are  Pl\"ucker coordinates in the cluster algebra structure on the coordinate ring of the Grassmannian. This purity result allows us to compute this distance for any ``generic'' pair of cluster variables. We further reformulate this distance in the context of general cluster algebras.

Let us first motivate the need for a notion of distance in a cluster algebra, concentrating on the example of the Grassmannian. A pair of Pl\"ucker coordinates can appear together in the same cluster if and only if they are labeled by weakly separated subsets of $[n]$. For Pl\"ucker coordinates that \emph{do not} appear together in the same cluster, we would like to estimate how close they are to being weakly separated. In a more general sense, it would be beneficial to have a notion that would measure how close are two cluster variables from appearing in the same cluster. When they do appear in the same cluster we say that the distance between them is zero.

Section~\ref{sec:Preliminaries} develops this notion (defined in equation (\ref{eqd1})) and introduces the notions of being weakly separated and ``generic'' mentioned above. Section~\ref{sec:Mainresult} states our main results. Section~\ref{section:further} provides the necessary background on domains inside and outside simple closed curves and plabic graphs, used in the proof of our main result. We prove Theorem~\ref{thm:LR} in Section~\ref{section:proofLR}. In Sections~\ref{sect:iso}, \ref{sec:description} and \ref{sec:Chord_separation} we develop some tools that will be useful to us in Section~\ref{section:proof} where we prove Theorems~\ref{thm:main} and \ref{thm:main_nc}. Finally, Section~\ref{section:unbalanced} gives a formula for the mutation distance (introduced in \cite{FP}) for a family of pairs of cluster variables and relates the corresponding optimal sequence of mutations with that of the octahedron recurrence.

\section{Preliminaries}
\label{sec:Preliminaries}
For $0 \leq k \leq n$, we denote by $[n] \choose k$ the collection\footnote{throughout the paper, we reserve the word ``set'' for subsets of $[n]$ while we use the word ``collection'' for subsets of $2^{[n]}$.} of $k$-element subsets of $[n]$. For $I\subset [n]$, we denote by $\bar I=[n]\setminus I$ its complement in $[n]$. For two subsets $I,J\subset [n]$ we write $I<J$ if $\max(I)<\min(J)$. We say that $I$ \emph{surrounds} $J$ if $I \setminus J$ can be partitioned as $I_1 \sqcup I_2$ such that $I_1 < J \setminus I < I_2$. We denote by $I \bigtriangleup J$ the symmetric difference $(I \setminus J) \cup (J \setminus I)$.
\begin{dfn}[\cite{LZ}]
 Two sets $S,T\subset [n]$ are called \emph{weakly separated} if at least one of the following holds:
 \begin{itemize}
  \item $|S|\leq |T|$ and $S$ surrounds $T$;
  \item $|T|\leq |S|$ and $T$ surrounds $S$.
 \end{itemize}
\end{dfn}

\begin{remark}\label{rmk:ws_definition}
 This definition has a particularly simple meaning when $I$ and $J$ have the same size. Consider a convex $n$-gon with vertices labeled by numbers $1$ through $n$. Then it is easy to see that two subsets $I$ and $J$ of the same size are weakly separated iff the convex hull of the vertices from the set $I\setminus J$ does not intersect the convex hull of the vertices from the set $J\setminus I$.
\end{remark}

\begin{dfn}
A collection $\CC \subset 2^{[n]}$ is called \emph{weakly separated} if any two of its elements are weakly separated. It is called \emph{maximal weakly separated} if it is weakly separated and is not contained in any other weakly separated collection.
\end{dfn}

\begin{dfn}[see \cite{DKK14}]
A collection $\A\subset 2^{[n]}$ is called \emph{a pure domain} if all maximal (by inclusion) weakly separated collections of sets from $\A$ have the same size. In this case, the size of all such collections is called \emph{the rank of $\A$} and denoted $\rk \A$.
\end{dfn}

The following two surprising results go under the name ``purity phenomenon'' and were conjectured in \cite{LZ} and \cite{Scott}, respectively. Both of them were proven independently in \cite{P} and in \cite{DKK10}:

\begin{thm}\label{thm:purity_of_n}
The collection $2^{[n]}$ is a pure domain of rank ${n \choose 2}+n+1$.
\end{thm}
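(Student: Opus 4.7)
My plan is to exhibit one explicit maximal weakly separated collection of the claimed size, then show every other such collection has the same size via a mutation/connectedness argument.

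First I would take
\[
\CC_0 \;=\; \{\emptyset\} \cup \{[i,j] : 1 \leq i \leq j \leq n\},
\]
the empty set together with all contiguous intervals. A short case analysis (nested, overlapping, or disjoint pairs of intervals) shows any two elements of $\CC_0$ are weakly separated: in each case the smaller interval surrounds the larger, placing the gap in the symmetric difference entirely into one of $I_1, I_2$. The cardinality is $1+\binom{n+1}{2}=\binom{n}{2}+n+1$, the desired number. To see maximality by inclusion, observe that if $S \subset [n]$ is not an interval, one can pick $i<j<k$ with $i,k\in S$ and $j\notin S$; then $S$ is not weakly separated from the singleton $\{j\}\in\CC_0$, since no partition $\{j\}=I_1\sqcup I_2$ satisfies $I_1 < S\setminus\{j\} < I_2$ when $j$ is flanked in $S$.

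Next I would introduce the square flip: if a weakly separated collection $\CC$ contains $S\cup\{a,c\}$ together with all four ``sides'' $S\cup\{a,b\}, S\cup\{b,c\}, S\cup\{c,d\}, S\cup\{a,d\}$ for some $S\subset[n]\setminus\{a,b,c,d\}$ and $a<b<c<d$, then $(\CC\setminus\{S\cup\{a,c\}\})\cup\{S\cup\{b,d\}\}$ is again weakly separated. Each side is routinely weakly separated from $S\cup\{b,d\}$, while the two diagonals $S\cup\{a,c\}$ and $S\cup\{b,d\}$ cannot coexist since $\{a,c\}$ and $\{b,d\}$ interlace and no partition of one can surround the other. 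Flips evidently preserve cardinality, and maximality by inclusion forces exactly one of the two diagonals to be present whenever all four sides are.

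The heart of the proof is then to show that any two maximal weakly separated collections of $2^{[n]}$ are connected by a sequence of flips. I would attempt induction on $n$: isolate the subcollection of sets containing the letter $n$, bring it to a canonical ``fan'' form by flips, apply the inductive hypothesis to the rest, and then reassemble. Carrying this out rigorously is the main obstacle and is precisely the content of \cite{P} (using reduced plabic graphs, whose face labels realize maximal weakly separated collections and where square moves correspond to flips, so that the face count of any such plabic graph yields the common size) and of \cite{DKK10} (using combined tilings and an Euler-characteristic count of tiles). Once connectedness is in hand, size-invariance of flips together with the explicit computation for $\CC_0$ yields the common rank $\binom{n}{2}+n+1$.
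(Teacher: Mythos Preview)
The paper does not prove this theorem; it is quoted as a known result from \cite{P} and \cite{DKK10}, so there is no in-paper argument to compare against. Your explicit collection $\CC_0$ and its count are fine, and your maximality check is correct.

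However, your mutation strategy has a genuine gap: square flips alone do \emph{not} connect all maximal weakly separated collections in $2^{[n]}$. Concretely, take $n=3$. Then ${3\choose 2}+3+1=7$, and one checks that the unique non-weakly-separated pair in $2^{[3]}$ is $\{2\}$ versus $\{1,3\}$, so the two maximal collections are $2^{[3]}\setminus\{\{2\}\}$ and $2^{[3]}\setminus\{\{1,3\}\}$. But a square flip requires four distinct elements $a<b<c<d$ in $[n]$, which do not exist for $n=3$; hence no square flip is available and these two maximal collections cannot be linked by your moves. More generally, a square flip replaces $S\cup\{a,c\}$ by $S\cup\{b,d\}$, both of size $|S|+2$, so it never changes the cardinality of the mutated set; in $2^{[n]}$ the sets have varying sizes and one must pass between them. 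The actual proofs use additional moves: \cite{DKK10} employs \emph{raising} and \emph{lowering} flips (exchanging a set for one of adjacent size), and the plabic-graph approach of \cite{P} handles $2^{[n]}$ by embedding it into a fixed-cardinality setting in $[2n]\choose n$ rather than by square moves inside $2^{[n]}$ itself. Your inductive scheme cannot succeed without introducing such extra moves; once you do, you are essentially reproducing one of the cited proofs.
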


\begin{thm}\label{thm:purity_of_nchoosek}
The collection $[n] \choose k$ is a pure domain of rank $k(n-k)+1$.
\end{thm}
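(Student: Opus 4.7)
The plan is to reduce Theorem~\ref{thm:purity_of_nchoosek} to the structure theory of mutations for weakly separated collections, following the plabic-graph approach of Postnikov~\cite{P}.

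First I would exhibit one maximal weakly separated collection $\CC_0 \subset \binom{[n]}{k}$ of size $k(n-k)+1$. A concrete candidate is the \emph{rectangles collection} consisting of all sets of the form $\{1,2,\ldots,r\} \cup \{r+s+1,\, r+s+2,\, \ldots,\, r+s+k-r\}$ for $0 \le r \le k$ and $0 \le s \le n-k$ (with duplicates removed). A direct count---separating the case $s=0$ or $r=k$, both of which collapse to $\{1,\ldots,k\}$---shows that the number of distinct such sets is exactly $k(n-k)+1$. Pairwise weak separation follows immediately from Remark~\ref{rmk:ws_definition}, since the symmetric difference of any two rectangles collection sets decomposes into at most two arcs on the $n$-gon that can be separated by a chord.

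Next I would establish the \emph{square move lemma}: given $T \in \binom{[n]}{k-2}$ disjoint from $\{a,b,c,d\}$ with $a<b<c<d$, if a weakly separated collection $\CC$ contains $Tab$, $Tbc$, $Tcd$, $Tad$, and $Tac$, then $\CC' := (\CC \setminus \{Tac\}) \cup \{Tbd\}$ is again weakly separated. This is a direct verification from the definition, and the move clearly preserves cardinality.

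The heart of the proof, and the main obstacle, is the \emph{connectivity claim}: every maximal-by-inclusion weakly separated collection $\CC \subset \binom{[n]}{k}$ can be transformed into $\CC_0$ by a finite sequence of square moves. Since each move preserves cardinality, this immediately yields $|\CC| = |\CC_0| = k(n-k)+1$. The danger is that $\CC$ might be ``stuck'' at a smaller size with no square move applicable. To rule this out I would pursue one of two strategies: (i) an inductive \emph{peeling} argument showing that $\CC$ can always be mutated to contain a distinguished cyclic interval such as $\{1,2,\ldots,k\}$, after which the subsets of $\CC$ either contain $\{1,\ldots,k-1\}$ or avoid $n$, reducing purity for $[n]$ to purity for $[n-1]$; or (ii) the geometric encoding of \cite{P, DKK10}, where $\CC$ corresponds to a plabic tiling of a convex $n$-gon, and maximality by inclusion forces the tiling to cover the entire polygon, so that Euler's formula determines the face count to be $k(n-k)+1$. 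Either route requires care, because one must show that any local obstruction to mutation (a set that cannot be flipped) is actually incompatible with the maximality of $\CC$.
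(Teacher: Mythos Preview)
The paper does not prove this theorem; it is stated in Section~\ref{sec:Preliminaries} as a known background result, with the proof attributed to~\cite{P} and~\cite{DKK10}. So there is no in-paper proof to compare against---only the cited references.

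Your outline follows the strategy of~\cite{P}. The first two steps (exhibiting the rectangles collection of size $k(n-k)+1$ and verifying the square-move lemma, which is Proposition~\ref{prop:mutations} in the paper) are routine and correctly described. The third step---connectivity of all maximal-by-inclusion collections under square moves---is the entire difficulty, and your proposal does not establish it: you list two possible routes without executing either. Route~(ii), the plabic-tiling argument, is essentially what~\cite{P} does, but the nontrivial content there is the bijection between maximal-by-inclusion weakly separated collections and reduced plabic graphs with the given decorated permutation; once that is in hand, the face count follows from the structure of reduced graphs. Your sentence ``maximality by inclusion forces the tiling to cover the entire polygon'' is precisely the statement that requires a real argument, and it is not obvious---this is where the substantive work of~\cite{P} and~\cite{DKK10} lies. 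Route~(i) as described is too vague to evaluate; the reduction to $[n-1]$ is not as clean as suggested, since removing an element does not straightforwardly preserve maximality.

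In short: your plan is correct in spirit and aligned with the literature the paper cites, but as written it defers the one genuinely hard step rather than proving it.
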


The latter result has a stronger version that relates pairs of maximal weakly separated collections by an operation that is called \emph{a mutation}.
\begin{prop}[see \cite{LZ}, \cite{Scott}]\label{prop:mutations}
Let $S \in {[n] \choose k-2}$ and let $a,b,c,d$ be cyclically ordered elements of $[n]\setminus S$. Suppose a maximal weakly separated collection $\mathcal{C}$ contains $Sab, Sbc, Scd, Sda$ and $Sac$. Then $\mathcal{C}'=(\mathcal{C} \setminus \{Sac\}) \cup \{Sbd\}$ is also a maximal weakly separated collection.
\end{prop}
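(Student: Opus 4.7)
The plan is to show $\mathcal{C}'$ is a weakly separated collection and then invoke Theorem~\ref{thm:purity_of_nchoosek} for maximality. Since $Sac$ and $Sbd$ have symmetric difference $\{a,b,c,d\}$ in which $\{a,c\}$ and $\{b,d\}$ cyclically interleave, Remark~\ref{rmk:ws_definition} shows that $Sac$ and $Sbd$ are not weakly separated; hence $Sbd \notin \mathcal{C}$, giving $|\mathcal{C}'|=|\mathcal{C}|=k(n-k)+1$. Once $\mathcal{C}'$ is known to be weakly separated, any strictly larger weakly separated collection containing it would contradict the size bound of Theorem~\ref{thm:purity_of_nchoosek}, so $\mathcal{C}'$ is then automatically maximal.

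The remaining task is to verify that $Sbd$ is weakly separated from every $T \in \mathcal{C} \setminus \{Sac\}$. The four comparisons of $Sbd$ against $Sab,Sbc,Scd,Sda$ are trivial, since each symmetric difference has size $2$. For a general $T$, the content of the proposition reduces to the following local lemma: if $T \in {[n]\choose k}$ satisfies $T \ne Sac$ and is weakly separated from each of the five sets $Sab,Sbc,Scd,Sda,Sac$, then $T$ is weakly separated from $Sbd$. The hypothesis $T\ne Sac$ is essential (taking $T=Sac$ makes the five conditions hold trivially while the conclusion fails), and it holds automatically for every $T\in\mathcal{C}\setminus\{Sac\}$.

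To prove the local lemma I would argue geometrically using the $n$-gon picture of Remark~\ref{rmk:ws_definition}. Assume for contradiction that $T$ and $Sbd$ are not weakly separated; then there exist cyclically ordered points $p_1,p_2,p_3,p_4$ on the $n$-gon with $p_1,p_3 \in T\setminus Sbd$ and $p_2,p_4 \in Sbd\setminus T$ (or vice versa). The four arcs cut out by $a,b,c,d$ tightly constrain where each $p_i$ may lie: weak separation of $T$ from $Sab,Sbc,Scd,Sda$ forbids interleavings crossing each of the four arc boundaries, while weak separation from $Sac$ forbids the remaining ``diagonal'' interleaving along $\{a,c\}$. A short case analysis on $T\cap\{a,b,c,d\}$, cut down using the involutive symmetry $(a,b,c,d)\mapsto(c,d,a,b)$ that preserves all five hypotheses, shows that every admissible configuration of $p_1,\ldots,p_4$ produces a forbidden interleaving witnessed by one of the five hypotheses, a contradiction.

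The main obstacle is organizing this case analysis cleanly, since one must simultaneously track $T\cap\{a,b,c,d\}$, the distribution of $T\setminus S$ across the four arcs, and the positions of the witnessing points $p_i$. Combining the $\mathbb{Z}/2$ symmetry above with a coarse split by $|T\cap\{a,b,c,d\}|$ cuts the work down to a manageable handful of local configurations, each yielding an immediate contradiction with one of the five assumed weak-separation relations.
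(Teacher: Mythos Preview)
The paper does not supply its own proof of this proposition; it simply cites \cite{LZ} and \cite{Scott}. Your outline is essentially the classical argument from \cite{LZ}: reduce to the ``local lemma'' that any $T\neq Sac$ weakly separated from $Sab,Sbc,Scd,Sda,Sac$ is automatically weakly separated from $Sbd$, and then check this by a case analysis on the interleaving witnesses. That reduction and the geometric picture you describe are correct, and your observation that the hypothesis $T\neq Sac$ is essential is exactly the point.

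One difference worth noting: you deduce maximality of $\mathcal{C}'$ from Theorem~\ref{thm:purity_of_nchoosek}, which is legitimate in the logical order of this paper but was not available to \cite{LZ}, where purity was still a conjecture. The original argument instead observes that the local lemma is symmetric in the roles of $\{a,c\}$ and $\{b,d\}$, so if some $T'$ were weakly separated from all of $\mathcal{C}'$ (in particular from $Sab,Sbc,Scd,Sda,Sbd$) with $T'\neq Sbd$, the lemma gives $T'$ weakly separated from $Sac$ and hence from all of $\mathcal{C}$, contradicting maximality of $\mathcal{C}$. Either route is fine; yours is shorter once purity is in hand.

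The only soft spot is that you describe the case analysis as ``short'' and leave it as a sketch. In practice the analysis (tracking which of $p_2,p_4$ lie in $S$ versus in $\{b,d\}$, and then locating $p_1,p_3$ relative to the four arcs) takes a page or so to write out carefully; it is routine but not entirely trivial. If you intend this as a complete proof rather than an outline, that step needs to be filled in.
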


Here by $Sab$ we mean $S\cup\{a,b\}$. We say that $\CC'$ and $\CC$ as above are \emph{connected by a mutation}\footnote{Such a mutation is usually called a \emph{square move}. There is a more general notion of a \emph{cluster mutation}, but it is beyond the scope of this paper.}.
\begin{thm}[see \cite{P}]\label{thm:purity_of_nchoosek_mutations}
The collection $[n] \choose k$ is a pure domain of rank $k(n-k)+1$. Moreover, any two maximal weakly separated collections in $[n] \choose k$ are connected by a sequence of mutations.
\end{thm}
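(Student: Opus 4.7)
The plan is to establish both the purity and the mutation connectivity simultaneously by exhibiting a rich family of maximal weakly separated collections via \emph{reduced plabic graphs}, and then showing that every maximal weakly separated collection in ${[n] \choose k}$ arises this way. The virtue of this approach, due to Postnikov, is that the natural local move on a plabic graph (the \emph{square move}) literally implements the mutation of Proposition~\ref{prop:mutations} on face labels, so mutation connectivity of collections will follow from the corresponding statement for plabic graphs.

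First, I would associate to each reduced plabic graph $G$ of type $(k,n)$, i.e.\ one whose strand permutation is $i \mapsto i + k \pmod n$, a labeling of its faces by elements of ${[n] \choose k}$: the label $\lambda(F)$ of a face $F$ consists of the indices $i$ such that the $i$-th strand has $F$ on its left. A short local analysis around an edge shows that labels of adjacent faces differ by a single element, and more globally that the collection $\mathcal{F}(G) := \{\lambda(F) : F \text{ a face of } G\}$ is weakly separated. A count using Euler's formula combined with the reducedness hypothesis gives $|\mathcal{F}(G)| = k(n-k) + 1$. Square moves on $G$ (flipping a quadrilateral face with alternating black/white boundary vertices) alter exactly one face label in a pattern matching Proposition~\ref{prop:mutations}, and Postnikov's theorem that reduced plabic graphs of fixed type are connected by square moves supplies the mutation connectivity on the graph side.

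The main obstacle, and the heart of the argument, is the converse: showing that every maximal weakly separated $\mathcal{C} \subset {[n] \choose k}$ equals $\mathcal{F}(G)$ for some reduced plabic graph $G$. I would construct a candidate $G$ as the planar dual of a \emph{plabic tiling} built from $\mathcal{C}$ itself: vertices of the tiling are the labels in $\mathcal{C}$, white 2-cells correspond to inclusion-maximal subcollections of the form $\{S \cup \{a\} : a \in T\} \subset \mathcal{C}$ with $|S| = k-1$, and black 2-cells correspond to inclusion-maximal subcollections of the form $\{R \setminus \{a\} : a \in R\} \subset \mathcal{C}$ with $|R| = k+1$. Weak separation guarantees that these 2-cells embed as convex polygons in a combinatorially consistent way, and the face labels of the resulting dual graph recover $\mathcal{C}$ by construction. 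The delicate point is verifying that when $\mathcal{C}$ is maximal, this tiling actually covers the full disk and the dual is a \emph{reduced} plabic graph of the correct type; equivalently, whenever the tiling has an uncovered region, one must exhibit an extra $k$-subset weakly separated from every element of $\mathcal{C}$, contradicting maximality. This exchange-type argument, rather than the plabic-graph bookkeeping, is where the real work lies.
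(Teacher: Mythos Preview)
The paper does not prove this theorem; it is quoted as background from \cite{P} (Oh--Postnikov--Speyer), so there is no proof in the paper to compare against. Your sketch is a faithful outline of the argument in \cite{P}: face labels of reduced plabic graphs give weakly separated collections of the right size, square moves realize the mutations of Proposition~\ref{prop:mutations}, and the substantive converse---that every maximal weakly separated collection in ${[n]\choose k}$ is of the form $\mathcal{F}(G)$---is established via the plabic-tiling construction you describe. Your identification of the ``delicate point'' (maximality forces the tiling to cover the disk, so that the dual is reduced of the correct type) is exactly where the work in \cite{P} is concentrated; the actual argument there proceeds by first showing the tiling is a CW complex homeomorphic to a disk and then reading off reducedness, rather than by a direct exchange-type contradiction, but the content is the same.
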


\begin{remark}\label{rmk:simple_closed_curve}
The list of collections known to be pure domains is not restricted to just $2^{[n]}$ and $[n]\choose k$. In \cite{P} both Theorem~\ref{thm:purity_of_n} and Theorem~\ref{thm:purity_of_nchoosek} above can be seen as special cases of the purity phenomenon for the collections inside a \emph{positroid}, while in \cite{DKK10} these collections are inside and outside of a \emph{generalized cyclic pattern}. Note that in both \cite{P} and \cite{DKK10}, the collections are required to lie, in a sense, \emph{inside and outside a specific simple closed curve} (see Definition~\ref{dfn:in_out} and Remark~\ref{rmk:positroid}). In this paper we present a new instance of the purity phenomenon which does not fit into this context, see Figure~\ref{fig:max_collection}. The domains that we discuss arise naturally when dealing with distances between cluster variables in the cluster algebra structure on the coordinate ring of the Grassmannian.
\end{remark}

\begin{dfn}
For $n\geq k\geq 0$, the {\it Grassmannian} $ \operatorname{Gr}(k,n)$ (over $\mathbb{R}$) is the space of $k$-dimensional linear subspaces in $\mathbb{R}^n$. It can be identified with the space of real $k\times n$ matrices of rank $k$ modulo row operations.
The $k\times k$ minors of $k\times n$-matrices form projective coordinates on the Grassmannian, called
the {\it Pl\"ucker coordinates}, that are denoted by $\Delta_I$, where $I \in {[n] \choose k}$.
\end{dfn}

A stronger version of Theorem~\ref{thm:purity_of_nchoosek_mutations} has been shown by Scott:
\begin{thm}[see~\cite{Scott,Scott2}] For $\C \subset {[n] \choose k}$, $\C$ is a maximal weakly separated collection iff the set of Pl\"ucker coordinates $\{\Delta_I\}_{I \in \C}$ is a cluster in the cluster algebra structure on the coordinate ring of $Gr(k,n)$.
\end{thm}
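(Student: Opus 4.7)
The plan is to prove both directions by combining an explicit base case with the mutation connectivity of Theorem~\ref{thm:purity_of_nchoosek_mutations}. For the forward direction (maximal weakly separated $\Rightarrow$ cluster), first I would exhibit one particular maximal weakly separated collection $\CC_0\subset\binom{[n]}{k}$ whose Pl\"ucker coordinates are known to form a seed of the cluster algebra on $\mathcal{O}(\mathrm{Gr}(k,n))$; a convenient choice is the ``rectangles'' collection coming from the standard rectangular reduced plabic graph, whose quiver can be read off directly from the graph and whose Pl\"ucker coordinates form the initial cluster in the usual construction of the Grassmannian cluster structure. This first step is really an appeal to the existing construction of the cluster algebra on the Grassmannian rather than something I would reprove from scratch.

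The key combinatorial step is to show that the square move of Proposition~\ref{prop:mutations} is a cluster mutation. This is precisely the three-term short Pl\"ucker relation
\[
\Delta_{Sac}\,\Delta_{Sbd} \;=\; \Delta_{Sab}\,\Delta_{Scd} \;+\; \Delta_{Sad}\,\Delta_{Sbc},
\]
interpreted as an exchange relation, provided the local quiver at $\Delta_{Sac}$ has neighbors $\Delta_{Sab},\Delta_{Sbc},\Delta_{Scd},\Delta_{Sda}$ with the alternating orientation dictated by this relation. I would verify this compatibility inductively along a sequence of square moves: by Theorem~\ref{thm:purity_of_nchoosek_mutations} any maximal weakly separated $\CC$ is reachable from $\CC_0$ via square moves, and since each move transforms the Pl\"ucker labels and the underlying quiver in the same combinatorial way, $\{\Delta_I\}_{I\in\CC}$ is a cluster. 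Cardinalities match on both sides, since a cluster contains exactly $k(n-k)+1$ variables, which is also the rank in Theorem~\ref{thm:purity_of_nchoosek}, so maximality is automatic.

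The main obstacle is the converse rigidity: a cluster of Pl\"ucker coordinates, reached from $\CC_0$ by an arbitrary sequence of mutations, need not a priori pass through Pl\"ucker clusters at intermediate stages, and at the final stage one must rule out the possibility that the same cluster could be reached via exchange relations that are not short Pl\"ucker relations and hence do not correspond to square moves. The cleanest way around this is the plabic-graph / alternating-strand-diagram formalism previewed in the paper's later sections: Pl\"ucker clusters correspond bijectively to reduced plabic graphs of type $(k,n)$, and cluster mutations at a Pl\"ucker coordinate correspond to the square move on the graph. Once this dictionary is in place, a cluster of Pl\"ucker coordinates is necessarily indexed by a maximal weakly separated collection, and both implications of the theorem follow from purely local checks on plabic graphs.
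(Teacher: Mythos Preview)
The paper does not prove this theorem at all: it is stated with the attribution ``see~\cite{Scott,Scott2}'' and no argument is supplied. There is therefore nothing in the paper to compare your proposal against; the result is being quoted as background.

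As for the merits of your sketch on its own terms: the forward direction is essentially the standard route, though note a logical-order issue --- Theorem~\ref{thm:purity_of_nchoosek_mutations} (mutation-connectedness) is a result of Oh--Postnikov--Speyer that postdates and partly relies on Scott's work, so invoking it to ``prove'' Scott's theorem risks circularity unless you are careful about what you take as primitive. The step ``verify inductively that the local quiver at $\Delta_{Sac}$ has exactly the four neighbours $\Delta_{Sab},\Delta_{Sbc},\Delta_{Scd},\Delta_{Sda}$ with alternating orientation'' is not automatic: the quiver of a Pl\"ucker seed is the dual graph of the corresponding plabic graph, and one really needs the plabic picture to know that a square move \emph{is} a cluster mutation rather than just a three-term Pl\"ucker identity. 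For the converse you correctly identify the genuine difficulty --- a cluster of Pl\"ucker coordinates might in principle be reached through intermediate seeds containing non-Pl\"ucker variables --- but your resolution (``once the plabic dictionary is in place, both implications follow'') is exactly what Scott and then Oh--Postnikov--Speyer actually prove. So your proposal is a faithful summary of the cited literature rather than an independent argument, which is the appropriate stance here since the paper itself treats the theorem as a black box.
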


This theorem associates maximal weakly separated collections with clusters, and $k$-tuples in ${[n] \choose k}$ with cluster variables, which are the Pl\"ucker coordinates. Two cluster variables can appear in the same cluster iff they are weakly separated. This trait leads to the following natural question: Given any pair of cluster variables -- how far are they from appearing in the same cluster?
More formally, let $I,J \in {[n] \choose k}$. Define:
\begin{equation}\label{eqd1}
\resizebox{.915\hsize}{!}{$d(I,J)=k(n-k)+1-\max \Big\{|\C_1 \cap \C_2| : \C_1,\C_2 \subset {[n] \choose k},\ I \in \C_1, J \in \C_2  \Big\}$}
\end{equation}
such that both $\C_1$ and $\C_2$ on the right hand side are weakly separated collections. Theorem~\ref{thm:purity_of_nchoosek} implies that $d(I,J)=0$ iff $I$ and $J$ are weakly separated, in which case we can take $\C_1=\C_2$ to be any maximal weakly separated collection containing $I$ and $J$. Thus, $d(\cdot,\cdot)$ measures how close a pair of $k$-tuples is to being weakly separated. This notion can be extended to any cluster algebra, by replacing $k(n-k)+1$ with the rank of the algebra and letting $\C_1$ and $\C_2$ be clusters. This defines a distance between cluster variables. Another (and different) notion of distance between clusters was studied in \cite{Oh}.

For a fixed $n$, we say that $I\subset [n]$ is an \emph{interval} if $I$ is of the form $[a,b]=\{a,a+1,\ldots,b-1, b\}$. If $b<a$ then we consider the elements in $[a,b]$ modulo $n$. For example, if $n=6$ then $[2,4]=\{2,3,4\}$ and $[5,2]=\{5,6,1,2\}$.
Note that $d(\cdot,\cdot)$ does not satisfy the triangle inequality since if $I$ is an interval then $d(I,J)=0$ for any $J$. Therefore both $\C_1$ and $\C_2$ always contain the following $n$ \emph{boundary intervals}:
\[\B_{k,n}=\Big\{\{1,2,\ldots,k\},\{2,3,\ldots,k+1\},\ldots, \{n,1,2,\ldots,k-1\}\Big\},\]
and thus $0 \leq d(I,J) \leq k(n-k)+1-n=(k-1)(n-k-1)$ for any pair $I, J \in {[n]\choose k}$.
\begin{dfn}
For any pair of $k$-tuples $I,J\in {[n]\choose k}$ we define the domain $\A_{I,J}\subset {[n]\choose k}$ to be the collection of all $k$-tuples that are weakly separated from both $I$ and $J$.
\end{dfn}
We can now define $d(\cdot,\cdot)$ equivalently as follows:
\begin{equation}\label{eqd2}
d(I,J)=k(n-k)+1-\max \Big\{|\C| : \C \subset \A_{I,J} \textrm{ is weakly separated } \Big\}.
\end{equation}
The optimal collection $\C$ on the right hand side of (\ref{eqd2}) would be just $\C_1 \cap \C_2$ for the optimal pair $\C_1$ and $\C_2$ in~(\ref{eqd1}). The equivalence of the two definitions follows from Theorem~\ref{thm:purity_of_nchoosek}.
\begin{example}
Let $I=\{1,2,4\}, J=\{3,5,6\} \in {[6] \choose 3}$. Since $I$ and $J$ are not weakly separated,
\[\max \Big\{|\C| \mid \C \subset \A_{I,J} \textrm{ is weakly separated } \Big\}< k(n-k)+1=10.\]
This maximum actually equals $8$ and is achieved at
\[\C=\B_{3,6}\cup\Big\{\{1,2,5\},\{1,3,4\}\Big\}.\]
Therefore,
\[d(I,J)=k(n-k)+1-8=2.\]

Let us consider another pair $\widehat{I}=\{1,3,5\}, \widehat{J}=\{2,4,6\} \in {[6] \choose 3}$. This time, the maximal size of a weakly separated collection in $\A_{\widehat{I},\widehat{J}}$ is $6$ and the only such collection is $\B_{3,6}$.
Hence, $d(\widehat{I},\widehat{J})=(k-1)(n-k-1)=4.$ Note that this value is the largest one that $d(\cdot,\cdot)$ can take for $n=6$ and $k=3$. This example can be generalized to any $k$ with $n=2k$: for $I:=\{1,3,\dots,2k-1\}$, we will see in Lemma~\ref{lemma:characterizationI} that $\A_{I,\bar I}=\B_{k,2k}$ and thus $d(I,\bar I)=(k-1)(n-k-1).$
\end{example}
Thus finding the maximal size of a weakly separated collection in $\A_{I,J}$ enables us to calculate $d(I,J)$. In this paper, we go further and show that for any "generic" pair $I$ and $J$, $\A_{I,J}$ is a pure domain, and we find its rank. This not only gives us the value of $d(I,J)$ for such pairs, but also introduces a new class of pure domains with a different structure from the previously known classes.

\section{Main results}
\label{sec:Mainresult}
In this section we state our main result, which deals with the purity of $\A_{I,J}$ for \emph{balanced} pairs $I,J$. We first discuss the case in which $I$ and $J$ form a complementary pair of sets (so $I \in {[2k] \choose k}$ and $J=\bar I=[2k] \setminus I$) and then proceed to the general case.
\begin{dfn}\label{dfn:intervals_associated_with_A}
Fix any integer $k$ and let $I\in {[2k] \choose k}$ be a set. Then $I$ and its complement $\II$ partition the circle (with numbers $1,2,\ldots,2k$) into an even number of intervals $I=P_1\cup P_3 \cup \ldots \cup P_{2u-1}$, $\II=P_2\cup P_4 \cup \ldots \cup P_{2u}$ for some $u \geq 1$, where $P_i$ are intervals for any $1 \leq i \leq 2u$ and $P_1<P_2<P_3<\dots<P_{2u-1}<P_{2u}$ (these inequalities are ``modulo $2k$''). We say that $\{P_i\}_{i=1}^{2u}$ are the \emph{intervals associated with $I$}. We also let $p_i:=|P_i|$ be their cardinalities and say that $(p_1,p_2,\dots,p_{2u})$ is the \emph{partition of the circle associated with $I$}. We say that $I$ is \emph{balanced} if for any $i\neq j\in [2u]$ we have $p_i+p_j<k$.
\end{dfn}

See Figure~\ref{fig:balanced} for examples of balanced and non-balanced sets.

\begin{figure}
 \centering

\begin{tabular}{|c|c|}\hline
 & \\
\begin{tikzpicture}[scale=0.6,every node/.style={scale=0.6}]
\node[draw,ellipse,white,fill=black] (node1) at (4.00,0.00) {$1$};
\node[draw,ellipse,white,fill=black] (node2) at (3.60,1.74) {$2$};
\node[draw,ellipse,white,fill=black] (node3) at (2.49,3.13) {$3$};
\node[draw,ellipse,black,fill=white] (node4) at (0.89,3.90) {$4$};
\node[draw,ellipse,black,fill=white] (node5) at (-0.89,3.90) {$5$};
\node[draw,ellipse,white,fill=black] (node6) at (-2.49,3.13) {$6$};
\node[draw,ellipse,white,fill=black] (node7) at (-3.60,1.74) {$7$};
\node[draw,ellipse,black,fill=white] (node8) at (-4.00,0.00) {$8$};
\node[draw,ellipse,black,fill=white] (node9) at (-3.60,-1.74) {$9$};
\node[draw,ellipse,black,fill=white] (node10) at (-2.49,-3.13) {$10$};
\node[draw,ellipse,white,fill=black] (node11) at (-0.89,-3.90) {$11$};
\node[draw,ellipse,white,fill=black] (node12) at (0.89,-3.90) {$12$};
\node[draw,ellipse,black,fill=white] (node13) at (2.49,-3.13) {$13$};
\node[draw,ellipse,black,fill=white] (node14) at (3.60,-1.74) {$14$};
\draw[] (0.00,0.00) -- (5.07,-1.16);
\draw[] (0.00,0.00) -- (2.26,4.69);
\draw[] (0.00,0.00) -- (-2.26,4.69);
\draw[] (0.00,0.00) -- (-5.07,1.16);
\draw[] (0.00,0.00) -- (-2.26,-4.69);
\draw[] (0.00,0.00) -- (2.26,-4.69);
\node[anchor=center] (nodeP0) at (1.77,0.98) {\scalebox{1.5}{$p_1=3$}};
\node[anchor=center] (nodeP1) at (0,2.86) {\scalebox{1.5}{$p_2=2$}};
\node[anchor=center] (nodeP2) at (-2.04,1.41) {\scalebox{1.5}{$p_3=2$}};
\node[anchor=center] (nodeP3) at (-1.77,-0.98) {\scalebox{1.5}{$p_4=3$}};
\node[anchor=center] (nodeP4) at (0.10,-2.8) {\scalebox{1.5}{$p_5=2$}};
\node[anchor=center] (nodeP5) at (2.04,-1.41) {\scalebox{1.5}{$p_6=2$}};
\node[anchor=west] (nodeI) at (4.16,4.16) {\scalebox{1.7}{$\bullet\in I$}};
\node[anchor=west] (nodeJ) at (4.16,3.33) {\scalebox{1.7}{$\circ\in \bar I$}};
\end{tikzpicture}
&
\begin{tikzpicture}[scale=0.6,every node/.style={scale=0.6}]
\node[draw,ellipse,white,fill=black] (node1) at (4.00,0.00) {$1$};
\node[draw,ellipse,white,fill=black] (node2) at (3.60,1.74) {$2$};
\node[draw,ellipse,white,fill=black] (node3) at (2.49,3.13) {$3$};
\node[draw,ellipse,black,fill=white] (node4) at (0.89,3.90) {$4$};
\node[draw,ellipse,black,fill=white] (node5) at (-0.89,3.90) {$5$};
\node[draw,ellipse,white,fill=black] (node6) at (-2.49,3.13) {$6$};
\node[draw,ellipse,white,fill=black] (node7) at (-3.60,1.74) {$7$};
\node[draw,ellipse,black,fill=white] (node8) at (-4.00,0.00) {$8$};
\node[draw,ellipse,black,fill=white] (node9) at (-3.60,-1.74) {$9$};
\node[draw,ellipse,black,fill=white] (node10) at (-2.49,-3.13) {$10$};
\node[draw,ellipse,black,fill=white] (node11) at (-0.89,-3.90) {$11$};
\node[draw,ellipse,white,fill=black] (node12) at (0.89,-3.90) {$12$};
\node[draw,ellipse,white,fill=black] (node13) at (2.49,-3.13) {$13$};
\node[draw,ellipse,black,fill=white] (node14) at (3.60,-1.74) {$14$};
\draw[] (0.00,0.00) -- (5.07,-1.16);
\draw[] (0.00,0.00) -- (2.26,4.69);
\draw[] (0.00,0.00) -- (-2.26,4.69);
\draw[] (0.00,0.00) -- (-5.07,1.16);
\draw[] (0.00,0.00) -- (-0.00,-5.20);
\draw[] (0.00,0.00) -- (4.07,-3.24);
\node[anchor=center,color=red] (nodeP0) at (1.77,0.98) {\scalebox{1.5}{$p_1=3$}};
\node[anchor=center] (nodeP1) at (0,2.86) {\scalebox{1.5}{$p_2=2$}};
\node[anchor=center,color=black] (nodeP2) at (-2.04,1.41) {\scalebox{1.5}{$p_3=2$}};
\node[anchor=center,color=red] (nodeP3) at (-1.41,-1.41) {\scalebox{1.5}{$p_4=4$}};
\node[anchor=center,color=black] (nodeP4) at (1.21,-2.24) {\scalebox{1.5}{$p_5=2$}};
\node[anchor=center,color=black] (nodeP5) at (2.60,-1.15) {\scalebox{1.5}{$p_6=1$}};
\node[anchor=west] (nodeI) at (4.16,4.16) {\scalebox{1.7}{$\bullet\in I$}};
\node[anchor=west] (nodeJ) at (4.16,3.33) {\scalebox{1.7}{$\circ\in \bar I$}};
\end{tikzpicture}\\
 & \\
$I=\{1\ 2\ 3,6\ 7,11\ 12\}$ is balanced & $I=\{1\ 2\ 3,6\ 7,12\ 13\}$ is {\bf not} balanced\\
 &   since $3+4=p_1+p_4\geq k=7$ \\\hline
\end{tabular}
\caption{\label{fig:balanced} A balanced set (left) and a non-balanced set (right) in $[14]\choose 7$.}
\end{figure}

For an even number $t$, we denote by $\P_t$ the collection of all sets $I$ for which $2u=t$ ($2u$ is taken from the definition above). Clearly, $I$ and $\II$ are weakly separated iff $I \in \P_2$. In addition, note that if $I \in \P_4$ then it is not balanced, since $p_1+p_3=p_2+p_4=k$. We study the structure of $\A_{I,\II}$ and $d(I,\II)$ for $I\in \P_4$ in the last section and relate them to the octahedron recurrence. Below is our main result for the complementary case.

\begin{thm}
\label{thm:main}
 Let $I\in\ch$ be balanced and let $(p_1,p_2,\dots,p_{2u})$ be the partition of the circle associated with $I$. Then $\A_{I,\II}$ is a pure domain of rank
\begin{equation}\label{eq:rankofW}
2k+\sum_{i=1}^{2u} {p_i\choose 2}.
\end{equation}
In other words, any maximal (by inclusion) weakly separated collection $\W\subset\A_{I,\II}$ has size given by (\ref{eq:rankofW}).
\end{thm}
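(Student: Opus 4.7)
The plan is to use the chord-separation characterization of weak separation (Remark~\ref{rmk:ws_definition}) to describe $\A_{I,\II}$ explicitly, and then to deduce purity and the rank formula by reducing to a domain of ``inside and outside a simple closed curve'' type (Remark~\ref{rmk:simple_closed_curve}).

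First I would classify each vertex of the $2k$-gon by its membership in $K$ and in $I$, yielding four types: $K\cap I$, $K\setminus I$, $I\setminus K$, and $\II\setminus K$. Then $K\in\A_{I,\II}$ iff there is a chord $\alpha$ separating $K\setminus I$ from $I\setminus K$ together with a chord $\beta$ separating $K\cap I$ from $\II\setminus K$. A case analysis using the balanced hypothesis should show that one may always take $\alpha$ and $\beta$ to cross, cutting the disk into four arcs. On two opposite arcs the $K$-membership is then forced (uniformly ``in $K$'' on one arc and ``not in $K$'' on the other), while on the remaining two opposite arcs the $I$-membership is forced --- so these two arcs must sit inside a single $P_i\subseteq I$ and a single $P_j\subseteq\II$ respectively. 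This yields a canonical parameterization of every element of $\A_{I,\II}$ by a pair of crossing chords together with the $K$-membership pattern on the two ``$I$-forced'' arcs.

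Next I would upgrade this parameterization to an identification of $\A_{I,\II}$ with the pure domain of $k$-subsets sitting inside and outside a simple closed curve on the disk, obtained by tracing $\alpha$, $\beta$ and appropriate arcs along the intervals $P_1,\dots,P_{2u}$, in such a way that weak separation is preserved in both directions. Purity of $\A_{I,\II}$ then follows from the general inside/outside purity cited in Remark~\ref{rmk:simple_closed_curve}. To pin down the rank, I would exhibit one explicit maximal weakly separated collection: the $2k$ cyclic boundary intervals of $[2k]$, together with, for each $P_i$, a family of $\binom{p_i}{2}$ additional $k$-sets coming from proper sub-intervals of length at least two inside $P_i$ (combined with a fixed ``background'' shifted boundary interval). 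Summing the contributions gives the claimed total $2k+\sum_i\binom{p_i}{2}$.

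The main obstacle lies in the first step: constructing the crossing chord configuration in full generality, and then verifying that the identification of the second step is truly compatible with weak separation in both directions. The balanced hypothesis $p_i+p_j<k$ enters precisely here, since it is what guarantees that the two ``$I$-forced'' arcs can each be placed inside a single $P_i$ without wrapping around or overlapping, so that a valid crossing chord configuration always exists for each $K\in\A_{I,\II}$. Without balancedness some pair of intervals is too large, the chord endpoints are forced into conflicting positions, and the reduction to a known pure domain breaks down --- matching the fact that purity genuinely fails outside the balanced regime, as already signaled by the $\P_4$ examples to be treated separately in Section~\ref{section:unbalanced}.
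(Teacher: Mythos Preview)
Your first step---the classification of elements $K\in\A_{I,\II}$ via a pair of chords cutting the disk into four arcs, with the two ``$I$-forced'' arcs each confined to a single $P_i$---is essentially correct and matches the paper's Lemma~\ref{lemma:characterizationI}. The balanced hypothesis is used exactly where you say, to rule out the two non-crossing configurations.

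The second step, however, contains a genuine gap. You propose to identify $\A_{I,\II}$ with the domain outside a \emph{single fixed} simple closed curve determined by $I$ alone, and then invoke the known inside/outside purity. This does not work: there is no such curve. Remark~\ref{rmk:simple_closed_curve}, which you cite, says precisely the opposite---that $\A_{I,\II}$ is a new instance of purity that \emph{does not} fit into the inside/outside framework of a fixed curve. The caption of Figure~\ref{fig:max_collection} makes this explicit: the curve $\S$ constructed in the paper depends on the particular maximal weakly separated collection $\W$, not just on $I$. Concretely, in Example~\ref{example_I2} there are four maximal weakly separated collections in $\A_{I,\II}$, and no single curve has all four as its outside domain.

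What the paper actually does is more delicate: given a maximal weakly separated collection $\W\subset\A_{I,\II}$, it builds a generalized cyclic pattern $\S=\S(\W)$ depending on $\W$ (via sequences $\St^i_t$ produced by the chord-separation machinery of Section~\ref{sec:Chord_separation}), proves $\W\subset\D_\S^\outt\subset\A_{I,\II}$, and then shows that $\rk\D_\S^\outt$ is independent of $\W$ by identifying $\S$ (up to the isomorphism of Section~\ref{sect:iso}) with the Grassmann necklace of the canonical permutation $\tau_I\circ\tau_{k,2k}$. Your proposal is missing both the $\W$-dependent construction of $\S$ and the argument that the resulting rank is constant. Your explicit collection in the third step does give the lower bound, but without the curve-per-$\W$ argument you have no matching upper bound.
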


\begin{remark}\label{unbalancedremark}
This theorem fails for some non-balanced sets. But note that if the set $I$ is chosen uniformly at random then it is clear that $I$ will be balanced with probability close to $1$ for large values of $k$, so the ``balancedness'' property can be thought of as the analogue of being a ``generic'' set.
\end{remark}

\begin{figure}
 \centering
\scalebox{0.52}{
\begin{tikzpicture}[yscale=0.4,xscale=0.4,every node/.style={scale=0.7}]
\node[draw,ellipse,black,fill=white] (node123456) at (7.50,25.79) {\scalebox{1.3}{$1\,2\,3\,4\,5\,6$}};
\node[draw,ellipse,black,fill=white] (node123457) at (4.50,20.67) {\scalebox{1.3}{$1\,2\,3\,4\,5\,7$}};
\node[draw,ellipse,black,fill=white] (node1234512) at (20.49,23.54) {\scalebox{1.3}{$1\,2\,3\,4\,5\,12$}};
\node[draw,ellipse,black,fill=white] (node12341112) at (27.99,19.65) {\scalebox{1.3}{$1\,2\,3\,4\,11\,12$}};
\node[draw,ellipse,black,fill=white] (node12351112) at (19.24,19.35) {\scalebox{1.3}{$1\,2\,3\,5\,11\,12$}};
\node[draw,ellipse,black,fill=white] (node123101112) at (27.99,15.15) {\scalebox{1.3}{$1\,2\,3\,10\,11\,12$}};
\node[draw,ellipse,black,fill=white] (node129101112) at (20.49,11.25) {\scalebox{1.3}{$1\,2\,9\,10\,11\,12$}};
\node[draw,ellipse,black,fill=white] (node139101112) at (15.75,14.07) {\scalebox{1.3}{$1\,3\,9\,10\,11\,12$}};
\node[draw,ellipse,black,fill=white] (node189101112) at (7.50,9.00) {\scalebox{1.3}{$1\,8\,9\,10\,11\,12$}};
\node[draw,ellipse,black,fill=white] (node234567) at (-7.50,25.79) {\scalebox{1.3}{$2\,3\,4\,5\,6\,7$}};
\node[draw,ellipse,black,fill=white] (node345678) at (-20.49,23.54) {\scalebox{1.3}{$3\,4\,5\,6\,7\,8$}};
\node[draw,ellipse,black,fill=white] (node345679) at (-15.75,20.72) {\scalebox{1.3}{$3\,4\,5\,6\,7\,9$}};
\node[draw,ellipse,black,fill=white] (node3456710) at (-8.00,20.42) {\scalebox{1.3}{$3\,4\,5\,6\,7\,10$}};
\node[draw,ellipse,black,fill=white] (node456789) at (-27.99,19.65) {\scalebox{1.3}{$4\,5\,6\,7\,8\,9$}};
\node[draw,ellipse,black,fill=white] (node4567910) at (-19.50,16.52) {\scalebox{1.3}{$4\,5\,6\,7\,9\,10$}};
\node[draw,ellipse,black,fill=white] (node5678910) at (-27.99,15.15) {\scalebox{1.3}{$5\,6\,7\,8\,9\,10$}};
\node[draw,ellipse,black,fill=white] (node67891011) at (-20.49,11.25) {\scalebox{1.3}{$6\,7\,8\,9\,10\,11$}};
\node[draw,ellipse,black,fill=white] (node67891012) at (-15.75,14.07) {\scalebox{1.3}{$6\,7\,8\,9\,10\,12$}};
\node[draw,ellipse,black,fill=white] (node789101112) at (-7.50,9.00) {\scalebox{1.3}{$7\,8\,9\,10\,11\,12$}};
\fill [opacity=0.2,black] (node123456.center) -- (node123457.center) -- (node234567.center) -- cycle;
\fill [opacity=0.2,black] (node1234512.center) -- (node12341112.center) -- (node12351112.center) -- cycle;
\fill [opacity=0.2,black] (node123101112.center) -- (node129101112.center) -- (node139101112.center) -- cycle;
\fill [opacity=0.2,black] (node345678.center) -- (node345679.center) -- (node456789.center) -- cycle;
\fill [opacity=0.2,black] (node345679.center) -- (node3456710.center) -- (node4567910.center) -- cycle;
\fill [opacity=0.2,black] (node456789.center) -- (node4567910.center) -- (node5678910.center) -- cycle;
\fill [opacity=0.2,black] (node67891011.center) -- (node67891012.center) -- (node789101112.center) -- cycle;
\draw[line width=0.12mm,black] (node123456) -- (node123457);
\draw[line width=0.12mm,black] (node123456) -- (node1234512);
\draw[line width=0.12mm,black] (node123456) -- (node234567);
\draw[line width=0.12mm,black] (node123457) -- (node123456);
\draw[line width=0.12mm,black] (node123457) -- (node1234512);
\draw[line width=0.12mm,black] (node123457) -- (node234567);
\draw[line width=0.12mm,black] (node1234512) -- (node123456);
\draw[line width=0.12mm,black] (node1234512) -- (node123457);
\draw[line width=0.12mm,black] (node1234512) -- (node12341112);
\draw[line width=0.12mm,black] (node1234512) -- (node12351112);
\draw[line width=0.12mm,black] (node12341112) -- (node1234512);
\draw[line width=0.12mm,black] (node12341112) -- (node12351112);
\draw[line width=0.12mm,black] (node12341112) -- (node123101112);
\draw[line width=0.12mm,black] (node12351112) -- (node1234512);
\draw[line width=0.12mm,black] (node12351112) -- (node12341112);
\draw[line width=0.12mm,black] (node12351112) -- (node123101112);
\draw[line width=0.12mm,black] (node123101112) -- (node12341112);
\draw[line width=0.12mm,black] (node123101112) -- (node12351112);
\draw[line width=0.12mm,black] (node123101112) -- (node129101112);
\draw[line width=0.12mm,black] (node123101112) -- (node139101112);
\draw[line width=0.12mm,black] (node129101112) -- (node123101112);
\draw[line width=0.12mm,black] (node129101112) -- (node139101112);
\draw[line width=0.12mm,black] (node129101112) -- (node189101112);
\draw[line width=0.12mm,black] (node139101112) -- (node123101112);
\draw[line width=0.12mm,black] (node139101112) -- (node129101112);
\draw[line width=0.12mm,black] (node139101112) -- (node189101112);
\draw[line width=0.12mm,black] (node189101112) -- (node129101112);
\draw[line width=0.12mm,black] (node189101112) -- (node139101112);
\draw[line width=0.12mm,black] (node189101112) -- (node789101112);
\draw[line width=0.12mm,black] (node234567) -- (node123456);
\draw[line width=0.12mm,black] (node234567) -- (node123457);
\draw[line width=0.12mm,black] (node234567) -- (node345678);
\draw[line width=0.12mm,black] (node345678) -- (node234567);
\draw[line width=0.12mm,black] (node345678) -- (node345679);
\draw[line width=0.12mm,black] (node345678) -- (node456789);
\draw[line width=0.12mm,black] (node345679) -- (node345678);
\draw[line width=0.12mm,black] (node345679) -- (node3456710);
\draw[line width=0.12mm,black] (node345679) -- (node456789);
\draw[line width=0.12mm,black] (node345679) -- (node4567910);
\draw[line width=0.12mm,black] (node3456710) -- (node345679);
\draw[line width=0.12mm,black] (node3456710) -- (node4567910);
\draw[line width=0.12mm,black] (node456789) -- (node345678);
\draw[line width=0.12mm,black] (node456789) -- (node345679);
\draw[line width=0.12mm,black] (node456789) -- (node4567910);
\draw[line width=0.12mm,black] (node456789) -- (node5678910);
\draw[line width=0.12mm,black] (node4567910) -- (node345679);
\draw[line width=0.12mm,black] (node4567910) -- (node3456710);
\draw[line width=0.12mm,black] (node4567910) -- (node456789);
\draw[line width=0.12mm,black] (node4567910) -- (node5678910);
\draw[line width=0.12mm,black] (node5678910) -- (node456789);
\draw[line width=0.12mm,black] (node5678910) -- (node4567910);
\draw[line width=0.12mm,black] (node5678910) -- (node67891011);
\draw[line width=0.12mm,black] (node5678910) -- (node67891012);
\draw[line width=0.12mm,black] (node67891011) -- (node5678910);
\draw[line width=0.12mm,black] (node67891011) -- (node67891012);
\draw[line width=0.12mm,black] (node67891011) -- (node789101112);
\draw[line width=0.12mm,black] (node67891012) -- (node5678910);
\draw[line width=0.12mm,black] (node67891012) -- (node67891011);
\draw[line width=0.12mm,black] (node789101112) -- (node189101112);
\draw[line width=0.12mm,black] (node789101112) -- (node67891011);
\draw[line width=0.12mm,black] (node234567) -- (node3456710);
\draw[line width=0.12mm,black] (node67891012) -- (node789101112);
\node[draw,ellipse,black,fill=white] (node123456) at (7.50,25.79) {\scalebox{1.3}{$1\,2\,3\,4\,5\,6$}};
\node[draw,ellipse,black,fill=white] (node123457) at (4.50,20.67) {\scalebox{1.3}{$1\,2\,3\,4\,5\,7$}};
\node[draw,ellipse,black,fill=white] (node1234512) at (20.49,23.54) {\scalebox{1.3}{$1\,2\,3\,4\,5\,12$}};
\node[draw,ellipse,black,fill=white] (node12341112) at (27.99,19.65) {\scalebox{1.3}{$1\,2\,3\,4\,11\,12$}};
\node[draw,ellipse,black,fill=white] (node12351112) at (19.24,19.35) {\scalebox{1.3}{$1\,2\,3\,5\,11\,12$}};	
\node[draw,ellipse,black,fill=white] (node123101112) at (27.99,15.15) {\scalebox{1.3}{$1\,2\,3\,10\,11\,12$}};
\node[draw,ellipse,black,fill=white] (node129101112) at (20.49,11.25) {\scalebox{1.3}{$1\,2\,9\,10\,11\,12$}};
\node[draw,ellipse,black,fill=white] (node139101112) at (15.75,14.07) {\scalebox{1.3}{$1\,3\,9\,10\,11\,12$}};
\node[draw,ellipse,black,fill=white] (node189101112) at (7.50,9.00) {\scalebox{1.3}{$1\,8\,9\,10\,11\,12$}};
\node[draw,ellipse,black,fill=white] (node234567) at (-7.50,25.79) {\scalebox{1.3}{$2\,3\,4\,5\,6\,7$}};
\node[draw,ellipse,black,fill=white] (node345678) at (-20.49,23.54) {\scalebox{1.3}{$3\,4\,5\,6\,7\,8$}};
\node[draw,ellipse,black,fill=white] (node345679) at (-15.75,20.72) {\scalebox{1.3}{$3\,4\,5\,6\,7\,9$}};
\node[draw,ellipse,black,fill=white] (node3456710) at (-8.00,20.42) {\scalebox{1.3}{$3\,4\,5\,6\,7\,10$}};
\node[draw,ellipse,black,fill=white] (node456789) at (-27.99,19.65) {\scalebox{1.3}{$4\,5\,6\,7\,8\,9$}};
\node[draw,ellipse,black,fill=white] (node4567910) at (-19.50,16.52) {\scalebox{1.3}{$4\,5\,6\,7\,9\,10$}};
\node[draw,ellipse,black,fill=white] (node5678910) at (-27.99,15.15) {\scalebox{1.3}{$5\,6\,7\,8\,9\,10$}};
\node[draw,ellipse,black,fill=white] (node67891011) at (-20.49,11.25) {\scalebox{1.3}{$6\,7\,8\,9\,10\,11$}};
\node[draw,ellipse,black,fill=white] (node67891012) at (-15.75,14.07) {\scalebox{1.3}{$6\,7\,8\,9\,10\,12$}};
\node[draw,ellipse,black,fill=white] (node789101112) at (-7.50,9.00) {\scalebox{1.3}{$7\,8\,9\,10\,11\,12$}};
\end{tikzpicture}
}
\caption{\label{fig:max_collection} A maximal by inclusion weakly separated collection $\W\subset\A_{I,\overline I}$ for $I=\{1,4,5,8,9,10\}\in{[12]\choose 6}$. We have $(p_1,\dots,p_6)=(1,2,2,2,3,2)$. Since $I$ is balanced, the size of this collection equals $12+0+1+1+1+3+1=19$. The white and black triangles form a \emph{plabic tiling} from~\cite{P}. The whole collection lies outside a simple closed polygonal chain $\S$ that we introduce in Section~\ref{section:proof}. This chain $\S$ depends on $\W$, unlike the ones in~\cite{P} and~\cite{DKK10,DKK14}.}
\end{figure}
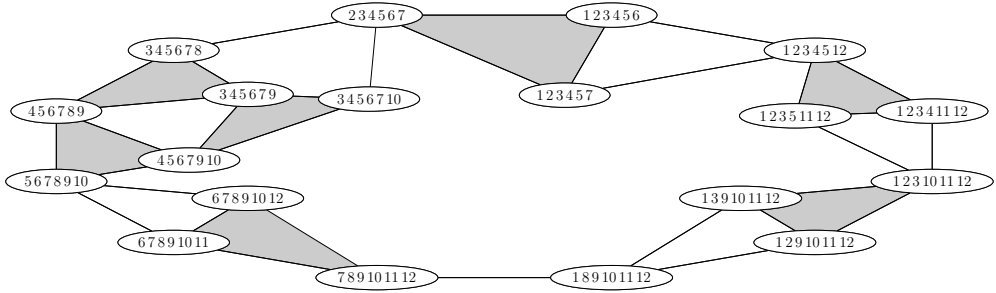


\begin{example}\label{example_I2}
Let
\[I=\{1,2,4,6,8\}, \II=\{3,5,7,9,10\} \in {[10] \choose 5}.\]
Then
\[\A_{I,\II}=\B_{5,10}\cup\Big\{\{1,2,3,4,9\},\{1,3,4,5,6\},\{2,7,8,9,10\},\{5,6,7,8,10\}\Big\}.\]
Consider a graph with vertex set $\A_{I,\II}$ and an edge between $S$ and $T$ iff $S$ and $T$ are weakly separated. Then the vertices labeled by the elements of $\B_{5,10}$ are connected to each other and to all other vertices and the remaining four vertices form the square in Figure~\ref{fig:square}. There are no triangles in Figure~\ref{fig:square}, so there are $4$ maximal weakly separated collections in $\A_{I,\II}$:

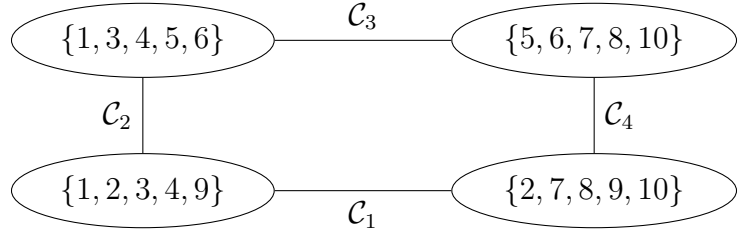
\begin{figure}
 \centering
\begin{tikzpicture}

 \node[draw,ellipse] (bl) at (0,0)  {$\{1,2,3,4,9\}$};
 \node[draw,ellipse] (tl) at (0,2)  {$\{1,3,4,5,6\}$};
 \node[draw,ellipse] (tr) at (6,2)  {$\{5,6,7,8,10\}$};
 \node[draw,ellipse] (br) at (6,0)  {$\{2,7,8,9,10\}$};
 \draw (bl)--(br) node [midway, below] {$\C_1$}  --(tr) node [midway, right] {$\C_4$} --(tl) node [midway, above] {$\C_3$} --(bl) node [midway, left] {$\C_2$};
\end{tikzpicture}
 \caption{\label{fig:square} Elements of $\A_{I,\II}\setminus \B_{5,10}$. Edges correspond to weakly separated pairs.}
\end{figure}

\[\C_1=\B_{5,10}\cup\Big\{\{1,2,3,4,9\},\{2,7,8,9,10\}\Big\},\ \C_2=\B_{5,10}\cup\Big\{\{1,2,3,4,9\},\{1,3,4,5,6\}\Big\},\]
\[\C_3=\B_{5,10}\cup\Big\{\{1,3,4,5,6\},\{5,6,7,8,10\}\Big\},\ \C_4=\B_{5,10}\cup\Big\{\{2,7,8,9,10\},\{5,6,7,8,10\}\Big\}.\]

All of them are of size $12$, and hence $\A_{I,\II}$ is a pure domain of rank $12$, which agrees with the statement of Theorem~\ref{thm:main} since $12=2\cdot 5+ {2 \choose 2}+ {2 \choose 2}$. A more elaborate example for Theorem~\ref{thm:main} is given in Figure~\ref{fig:max_collection}.

\end{example}

Another interesting instance of purity that we have discovered is the following ``left-right purity phenomenon'' which also does not lie inside just one simple closed curve as we have noted in Remark~\ref{rmk:simple_closed_curve}. For a positive integer $n$, denote by $\LR{n}$ the collection of all subsets $I\subset [0,n]:=\{0,1,\dots,n\}$ such that $I$ contains \textit{exactly one} of the elements $0$ and $n$. Then we obtain the following description of maximal weakly separated collections inside $\LR{n}$:

\begin{thm}
\label{thm:LR}
\begin{enumerate}
 \item The domain $\LR{n}$ is a pure domain of rank ${n\choose 2}+n+1=\rk 2^{[n]}$;
 \item\label{item:LR_sequence} For each maximal weakly separated collection $\W\subset\LR{n}$ and for all $m=0,\dots , n-1$ there is a unique set $S_m\subset [n-1]$ of size $m$ such that both $S_m\cup \{0\}$ and $S_m\cup \{n\}$ belong to $\W$. For these sets, we have

 $$\emptyset=S_0\subset S_1\subset\dots\subset S_{n-1}=[n-1].$$
\end{enumerate}
\end{thm}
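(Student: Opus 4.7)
The plan is to reduce the problem to the purity of $2^{[n-1]}$ (Theorem~\ref{thm:purity_of_n}) via projections. Given a maximal weakly separated collection $\W\subset\LR{n}$, decompose $\W=\W_0\sqcup\W_n$ based on whether $0$ or $n$ lies in the set, and define the projections $\A_0=\{S\setminus\{0\}:S\in\W_0\}$ and $\A_n=\{S\setminus\{n\}:S\in\W_n\}$, both subsets of $2^{[n-1]}$. Then $|\W|=|\A_0|+|\A_n|=|\A_0\cup\A_n|+|\A_0\cap\A_n|$.

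The first key step is the following characterization, obtained by a direct case analysis of the surrounds condition: for $T,S\subset[n-1]$, the sets $T\cup\{0\}$ and $S\cup\{n\}$ are weakly separated in $[0,n]$ if and only if $T\setminus S<S\setminus T$, meaning $\max(T\setminus S)<\min(S\setminus T)$ when both are nonempty (the condition is trivial when either difference is empty). The key observation is that $n$ is the maximum of $S\cup\{n\}$ and $0$ is the minimum of $T\cup\{0\}$, forcing the ``upper'' and ``lower'' pieces in the surrounds decomposition to collapse to specific forms.

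Two consequences follow. First, $\A_0\cup\A_n$ is a weakly separated collection in $2^{[n-1]}$: within each of $\A_0,\A_n$ weak separation is preserved by removing the common element, and the cross condition $T\setminus S<S\setminus T$ implies $T$ and $S$ are weakly separated in $[n-1]$. By Theorem~\ref{thm:purity_of_n} applied to $2^{[n-1]}$, we have $|\A_0\cup\A_n|\leq\binom{n-1}{2}+n$. Second, $\A_0\cap\A_n$ is totally ordered by inclusion: for distinct $T,T'$ both in it, applying the characterization to the pairs $(T\cup\{0\},T'\cup\{n\})$ and $(T'\cup\{0\},T\cup\{n\})$ yields both $T\setminus T'<T'\setminus T$ and $T'\setminus T<T\setminus T'$, which forces one of these differences to be empty. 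A chain in $2^{[n-1]}$ has at most $n$ elements, so $|\A_0\cap\A_n|\leq n$, and combining gives $|\W|\leq\binom{n-1}{2}+2n=\binom{n}{2}+n+1$.

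The main obstacle is establishing the matching lower bound: $|\A_0\cup\A_n|=\binom{n-1}{2}+n$ and $|\A_0\cap\A_n|=n$. The anchor elements $\emptyset$ and $[n-1]$ always lie in $\A_0\cap\A_n$ because $\{0\},\{n\},[0,n-1],[1,n]$ are each weakly separated from every element of $\LR{n}$ and hence lie in $\W$. Suppose for contradiction that the chain has a gap, i.e., consecutive chain elements $S_i\subsetneq S_{i+1}$ with $|S_{i+1}\setminus S_i|\geq 2$. Then one argues there is a choice of $j\in S_{i+1}\setminus S_i$ such that setting $T=S_i\cup\{j\}$ allows both $T\cup\{0\}$ and $T\cup\{n\}$ to be added to $\W$: the key-lemma cross-conditions are automatic for every element of $\A_0\cap\A_n$ (since $T$ is comparable to each chain element), and the remaining conditions involving non-chain elements of $\A_0$ and $\A_n$ admit a valid $j$ by a combinatorial argument using the structure of $\A_0\cup\A_n$. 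Similarly, if $\A_0\cup\A_n$ is not maximal in $2^{[n-1]}$, then some $T\in 2^{[n-1]}\setminus(\A_0\cup\A_n)$ is weakly separated from all of $\A_0\cup\A_n$, and we verify that the obstructions preventing $T\cup\{0\}$ from being added (arising from some $S\in\A_n$) and those preventing $T\cup\{n\}$ from being added (arising from some $S'\in\A_0$) cannot simultaneously occur, yielding an augmentation of $\W$. Both augmentations contradict maximality, forcing the lower bound and completing Part~(1). Part~(2) then follows immediately: $\A_0\cap\A_n$ is a saturated chain of $n$ elements, with a unique $S_m$ of each size $m\in\{0,1,\dots,n-1\}$, starting at $\emptyset$ and ending at $[n-1]$.
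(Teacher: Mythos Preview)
Your approach via the projections $\A_0,\A_n\subset 2^{[n-1]}$ is genuinely different from the paper's, and the upper-bound half is clean and correct: the characterization of when $T\cup\{0\}$ and $S\cup\{n\}$ are weakly separated, the fact that $\A_0\cup\A_n$ is weakly separated in $2^{[n-1]}$, and that $\A_0\cap\A_n$ is a chain, all go through and give $|\W|\leq\binom{n}{2}+n+1$ immediately from Theorem~\ref{thm:purity_of_n}.

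The lower bound, however, is where the real work lies, and you have not done it. Both of your key claims are asserted without proof:
\begin{itemize}
\item For chain saturation, you write that ``one argues there is a choice of $j\in S_{i+1}\setminus S_i$'' that works, ``by a combinatorial argument using the structure of $\A_0\cup\A_n$''. This is precisely the content of the paper's inductive construction in Section~\ref{section:proofLR}: one must actually \emph{specify} which $j$ to take (the paper takes $j$ minimal such that $S_i\cup\{j\}$ remains to the right of every left subset) and then verify all four weak-separation conditions, which requires a genuine contradiction argument playing a blocking right subset against a blocking left subset. You have not supplied this.
\item For the maximality of $\A_0\cup\A_n$ in $2^{[n-1]}$, you claim the two obstructions ``cannot simultaneously occur'' but give no argument. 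This is not obvious: you must rule out the existence of $S\in\A_n$ and $S'\in\A_0$ with $\max(T\setminus S)\geq\min(S\setminus T)$ and $\max(S'\setminus T)\geq\min(T\setminus S')$, using only that $T$ is weakly separated from each of $S,S'$ in $[n-1]$ and that $S'\setminus S<S\setminus S'$. The paper sidesteps this issue entirely: after building the chain it constructs a simple cyclic pattern $\P$ from it and invokes the purity of $\D_\P^\inn$ and $\D_\P^\outt$ from~\cite{DKK14} to compute $\rk\D_\P^\outt$ directly.
\end{itemize}
In short, the decomposition $|\W|=|\A_0\cup\A_n|+|\A_0\cap\A_n|$ is elegant and the upper bound is complete, but the two statements you label ``one argues'' and ``we verify'' are exactly the substance of the theorem and need actual proofs before this can stand as an alternative argument.
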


\begin{remark}
 There is a simple bijection $\phi:\LR{n}\to 2^{[n]}$ that just removes the zero: $\phi(I):=I\setminus \{0\}$ for $I\in \LR{n}$. It is a bijection because $0\in I$ iff $n\not\in \phi(I)$. Moreover, if two sets from $\LR{n}$ were weakly separated then their images are also going to be weakly separated, but the converse is not true. To give a counterexample, consider $n=4$ and take two sets $I,J\in\LR{4}$ defined by $I=\{0,2,3\},\ J=\{1,4\}$. They are not weakly separated, but their images $\phi(I)=\{2,3\}$ and $\phi(J)=\{1,4\}$ are. This is why Theorem \ref{thm:LR} is not a simple consequence of Theorem \ref{thm:purity_of_n}.
\end{remark}

\begin{remark}
 The second part of Theorem \ref{thm:LR} follows directly from \cite[Corollary 3.4]{LZ}. An almost identical but slightly different notion appears in \cite[Definition 6]{DKK10} as a \textit{left-right pair}. However neither of these papers mentions the purity of the corresponding domain.
\end{remark}

Next, we generalize Theorem \ref{thm:main} to the case of not necessarily complementary subsets. Namely, take any two subsets $I,J\in {[n]\choose m}$ and put $k:=|I\setminus J|=|J\setminus I|$. After we ignore the elements from $I\cap J$ and $\overline{J\cup I}$, we get two complementary sets $\It,\Jt\in {[2k]\choose k}$. We say that $I$ and $J$ form a \emph{balanced pair} if the set $\It$ is balanced.

\begin{thm}
\label{thm:main_nc}
 Let $I,J\in{[n]\choose m}$ form a balanced pair and let $\It,\Jt,k$ be as above. Let $(p_1,p_2,\dots,p_{2u})$ be the partition of the circle associated with $\It$. Then $\A_{I,J}$ is a pure domain of rank
 $$m(n-m)-k^2+2k+\sum_{i=1}^{2u} {p_i\choose 2}.$$
\end{thm}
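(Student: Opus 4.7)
The plan is to reduce Theorem~\ref{thm:main_nc} to the complementary case of Theorem~\ref{thm:main} by isolating the behaviour of $S \in \A_{I,J}$ on the ``fixed'' regions $C := I \cap J$ and $D := \overline{I \cup J}$ from its behaviour on the symmetric difference $I \triangle J$. The cyclic ordering of $[n]$ decomposes into arcs of the four types $I\cap J$, $I\setminus J$, $J\setminus I$, $\overline{I\cup J}$, and the reduced pair $(\It,\Jt)\in{[2k]\choose k}$ is obtained by collapsing the arcs of $C$ and $D$. Crucially, $(\It,\Jt)$ inherits the same interval partition $(p_1,\dots,p_{2u})$, so the balanced hypothesis transfers to the reduced pair.

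The key construction is the restriction map $\pi\colon \A_{I,J} \to 2^{[2k]}$ defined by $\pi(S):=S\cap (I\triangle J)$ after the natural relabeling of $I\triangle J$ as $[2k]$. Using the chord/convex-hull characterization of weak separation from Remark~\ref{rmk:ws_definition}, I would show that $S \in \A_{I,J}$ iff (i) $\pi(S)$ is weakly separated from both $\It$ and $\Jt$ on $[2k]$, and (ii) on each arc of $C$ or $D$, the set $S$ restricts to a (possibly empty) consecutive sub-arc. Condition (ii) exactly captures the requirement that the two chords separating $S\setminus I$ from $I\setminus S$ and $S\setminus J$ from $J\setminus S$ can coexist without being forced to cross inside a $C$- or $D$-arc.

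I would then split $\A_{I,J}$ into a ``generic'' part (where $|\pi(S)|=k$) and a ``boundary'' part (where $|\pi(S)|\neq k$). On the generic part, $\pi$ surjects onto $\A_{\It,\Jt}$; by Theorem~\ref{thm:main} applied to the balanced set $\It$, a maximal weakly separated collection in the generic part contains exactly $2k+\sum_{i=1}^{2u}{p_i\choose 2}$ elements. On the boundary part, a direct enumeration of sub-arcs of $C$ and $D$ (combined with cyclic-interval extensions on $I\triangle J$) gives $m(n-m)-k^2=k(|C|+|D|)+|C|\cdot|D|$ boundary sets, and one shows that every such set is forced to lie in any maximal weakly separated $\W \subset \A_{I,J}$, by an inclusion argument analogous to why $\B_{m,n}$ is contained in every maximal collection of $m$-subsets.

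The main obstacle is the equivalence (i)$\Leftrightarrow$(ii) in the key construction together with the simultaneous-realizability claim: one must show that every maximal weakly separated collection in $\A_{\It,\Jt}$ genuinely lifts to a maximal collection in $\A_{I,J}$ with all $m(n-m)-k^2$ boundary sets added, without any new pairwise weak-separation obstruction arising between boundary and generic sets. The natural route is to extend the simple closed polygonal chain $\S$ of Section~\ref{section:proof} so that it canonically traverses each arc of $C$ and each arc of $D$, thereby realizing the non-complementary problem as a fibre bundle over the complementary one. Assembling the generic count from Theorem~\ref{thm:main} with the boundary count yields the total rank $m(n-m)-k^2+2k+\sum_{i=1}^{2u}{p_i\choose 2}$, as required.
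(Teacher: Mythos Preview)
Your reduction strategy is natural but it runs in the opposite direction from the paper and, more importantly, has a genuine gap at its core.

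First, a structural point: in the paper, Theorem~\ref{thm:main} is not proved independently---it is deduced as the special case $n=2k$, $m=k$ of Theorem~\ref{thm:main_nc}. So invoking Theorem~\ref{thm:main} as a black box is circular unless you supply a separate proof of it.

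The substantive gap is in your treatment of the ``generic'' part. The map $\pi\colon S\mapsto S\cap(I\triangle J)$ is many-to-one even on the fibre $|\pi(S)|=k$: two sets $S,S'\in\A_{I,J}$ with $\pi(S)=\pi(S')$ can differ on the $C$- and $D$-arcs, and nothing you have said forces such $S,S'$ to be non-weakly-separated from one another. Consequently there is no reason why a maximal weakly separated collection inside the generic fibre should have size exactly $\rk\A_{\It,\Jt}$; the fibres of $\pi$ carry extra combinatorial content that does not collapse. Your proposed characterisation ``(i) $\pi(S)$ is weakly separated from $\It,\Jt$ and (ii) $S$ restricts to a consecutive sub-arc on each $C$- or $D$-arc'' is also not the right description: compare with Lemma~\ref{lemma:characterizationI}, which shows that the behaviour of $S$ on $C\cup D$ is governed by the positions of the four points $\alpha,\beta,\gamma,\delta$, not arc-by-arc. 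In particular, $S$ need not meet each $C$-arc in a sub-arc, and the constraints couple different arcs together. Similarly, the assertion that there are exactly $m(n-m)-k^2$ ``boundary'' sets and that each is forced into every maximal $\W$ is a substantial claim that does not follow from the $\B_{m,n}$ analogy; those boundary intervals are weakly separated from \emph{every} $m$-subset, whereas your boundary sets are not.

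What the paper actually does is quite different. Given a maximal $\W\subset\A_{I,J}$, it uses chord-separation (Lemmas~\ref{lemma:chord} and~\ref{lemma:subsets_8}) to build, for each interval $\Pt_i$, a flag $\emptyset=\St_0^i\subset\cdots\subset\St_{p_i}^i=\Pt_i$ adapted to $\W$, and from these assembles a generalized cyclic pattern $\S$ of length $2k$ \emph{depending on $\W$}. Lemma~\ref{lemma:outside} shows $\W\subset\D_\S^\outt\subset\A_{I,J}$, so $\W$ is maximal inside $\D_\S^\outt$. The rank of $\D_\S^\outt$ is then computed via the complementary-pair identity~\eqref{eqn:ranks} together with Lemma~\ref{lemma:iso}, which identifies $\proj\S$ with the Grassmann necklace of the canonical permutation $\tau_{\It}\circ\tau_{k,2k}$ and hence gives $\rk\D_\S^\inn=1+k^2-\sum_i\binom{p_i}{2}$. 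The projection $\proj$ does appear, but only to compare $\S$ with a necklace in $\ch$; there is no attempt to project all of $\W$ and no splitting into generic versus boundary.
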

Note that the additional term $m(n-m)-k^2$ is nothing but the difference of ranks $\rk {[n]\choose m}-\rk \ch$. In terms of the distance $d(I,J)$, we have the following

\begin{thm}\label{cor:upperbounddistance}
Let $I,J\in{[n]\choose m}$ form a balanced pair and let $k:=|I\setminus J|=|J\setminus I|$ be as above. Then
\[d(I,J)=1+k^2-2k-\sum_{i=1}^{2u} {p_i\choose 2}.\]
If $I$ and $J$ do not form a balanced pair then
\[d(I,J)\leq 1+k^2-2k-\sum_{i=1}^{2u} {p_i\choose 2}.\]
\end{thm}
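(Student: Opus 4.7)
Both halves of the theorem reduce to computing the largest weakly separated subcollection of $\A_{I,J}$. Indeed, writing the defining formula~(\ref{eqd2}) for ${[n]\choose m}$ (whose rank is $m(n-m)+1$) gives
\[
d(I,J) = m(n-m)+1-M(I,J), \qquad M(I,J):=\max\bigl\{|\C|:\C\subset\A_{I,J}\text{ weakly separated}\bigr\},
\]
so the whole problem is to pin down $M(I,J)$.

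\textbf{Balanced case.} Here I would just invoke Theorem~\ref{thm:main_nc}: purity of $\A_{I,J}$ means that every maximal-by-inclusion weakly separated subcollection already achieves the maximum, so
\[M(I,J)=\rk\A_{I,J}=m(n-m)-k^{2}+2k+\sum_{i=1}^{2u}\binom{p_{i}}{2}.\]
Substituting into the display above yields $d(I,J)=1+k^{2}-2k-\sum_{i}\binom{p_{i}}{2}$ with no additional work.

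\textbf{Non-balanced case.} Now I need the one-sided bound $M(I,J)\ge m(n-m)-k^{2}+2k+\sum_{i}\binom{p_{i}}{2}$, since balancedness is not available to drive the full purity argument. My plan is to exhibit an explicit weakly separated collection $\W\subset\A_{I,J}$ of this size. The construction should mimic the extremal example in Figure~\ref{fig:max_collection}: start with the $n$ boundary intervals; for each arc $P_{i}$ of length $p_{i}$ in the partition of the circle associated with $\It$, append a pre-packaged block of $\binom{p_{i}}{2}$ sets obtained by varying one element of a boundary interval within the arc $P_{i}$; and finally enlarge by the $m(n-m)-k^{2}$ additional Pl\"ucker labels that come from embedding the reduced problem on ${[2k]\choose k}$ back into ${[n]\choose m}$ via the fixed indices $I\cap J$ and $\overline{I\cup J}$. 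The key point I would emphasize is that this recipe depends only on the cyclic sequence $(p_{1},\dots,p_{2u})$, not on any inequality between the $p_{i}$.

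\textbf{Main obstacle.} The hard part is the verification phase of the non-balanced construction: I must check (i) pairwise weak separation inside $\W$, (ii) weak separation of every $S\in\W$ from both $I$ and $J$, and (iii) the size count $|\W|=m(n-m)-k^{2}+2k+\sum_{i}\binom{p_{i}}{2}$. Step (iii) is bookkeeping and step (i) is handled arc-by-arc. Step (ii) is where the non-balanced regime could misbehave, because when $p_{i}+p_{j}\ge k$ some sets in the ${[2k]\choose k}$ picture that ``look safe'' in an individual arc can fail to be weakly separated from $I$ or $\II$. I would forestall this by choosing the block of sets contributed by $P_{i}$ so that each set differs from the ``reference'' boundary interval only within the single arc $P_{i}$; then $S\bigtriangleup I$ and $S\bigtriangleup J$ are contained in a single arc, and weak separation follows from the convex-hull description of Remark~\ref{rmk:ws_definition}. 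Once this local-support property of the construction is established, both halves of Theorem~\ref{cor:upperbounddistance} follow from the single identity $d(I,J)=m(n-m)+1-M(I,J)$.
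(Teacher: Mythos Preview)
Your overall strategy matches the paper's. The balanced case is exactly as you say: substitute the rank from Theorem~\ref{thm:main_nc} into the definition~(\ref{eqd2}) of $d(\cdot,\cdot)$. For the non-balanced inequality the paper also proceeds by exhibiting an explicit weakly separated collection inside $\A_{I,J}$ of the required size; this is Theorem~\ref{thm:lowerboununbalanced}, whose proof writes down the collection $\C=\bigl(\cup_i T_i\bigr)\cup\bigl(\cup_{ij}M_{ij}\bigr)\cup\bigl(\cup_i B_i\bigr)$ and checks it by hand. (That theorem in fact gives a strictly sharper bound, with extra summands $(a_i+b_j-k+1)\chi_{i,j}$ coming from pairs of arcs whose lengths sum to at least $k$; your construction corresponds to dropping the $M_{ij}$ blocks.)

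There is, however, a genuine error in your justification of step~(ii). You assert that if $S$ differs from a reference boundary interval only within a single arc $P_i$, then $S\bigtriangleup I$ and $S\bigtriangleup J$ are each contained in a single arc. This is false: a boundary interval $B$ already meets $I$ and $J$ across many arcs, so $S\bigtriangleup I=(S\bigtriangleup B)\bigtriangleup(B\bigtriangleup I)$ will typically span several arcs. In the complementary case $n=2k$ your claim is vacuous: each arc lies entirely in $I$ or entirely in $\II$, so no $S\in{[2k]\choose k}$ with $S\neq I$ can have $S\bigtriangleup I$ supported on a single arc. The correct verification, as carried out in the proof of Theorem~\ref{thm:lowerboununbalanced}, is different: one checks that for each $S$ in the constructed collection the set $S\setminus I$ is either a single cyclic interval or a union of two cyclic intervals with no element of $I\setminus S$ lying between them on one of the two sides, and likewise for $J$; weak separation then follows from Remark~\ref{rmk:ws_definition}. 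So your plan is right but the local-support argument you sketch for step~(ii) does not go through and must be replaced by this interval-structure check.
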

The first part of this theorem follows from Theorem~\ref{thm:main_nc}. For the second part, we have an even stronger upper bound, see Theorem~\ref{thm:lowerboununbalanced}.

\section{Further notations and background}\label{section:further}
We denote by $<_i$ the cyclically shifted linear order on $[n]$:

$$i <_i i+1 <_i \ldots <_i n <_i 1 <_i \ldots <_i i-1.$$

Recall that for two sets $A,B\subset [n]$ we write $A<B$ whenever $\max(A)<\min(B)$. In addition, we write $A \prec_i B$ if \[A=\{a_1<_ia_2<_i\ldots<_ia_t\},\quad B=\{b_1<_ib_2<_i\ldots<_ib_r\}\]
with $t \leq r$ and $a_m \leq_i b_m$ for all $1 \leq m \leq t$. By $A\prec B$ we mean $A\prec_1 B$.




%


\subsection{Pure domains inside and outside a simple closed curve}
\label{subsect:dkk}

In this subsection we discuss the approaches of \cite[Section 9]{P} and \cite{DKK14} regarding domains inside and outside simple closed curves. We start with defining a map that appears in both of the papers and justifies the geometric intuition that we are using, for example, while thinking about simple closed curves. Let us fix $n$ vectors $\xi_1,\xi_2, \ldots, \xi_n \in \mathbb{R}^2$ so that the points $(0,\xi_1,\dots,\xi_n)$ are the vertices of a convex $n+1$-gon in clockwise order.
Define:
$$Z_n=\{\lambda_1\xi_1+\ldots+\lambda_n\xi_n \mid 0 \leq \lambda_i \leq 1, i=1,2,\ldots,n\}.$$
We identify a subset $I \subset [n]$ with the point $\sum_{i \in I} \xi_i$ in $Z_n$. Note that if two subsets $I$ and $J$ are weakly separated then the corresponding points are different. Indeed, suppose $|I|\leq |J|$ and $I$ surrounds $J$. The latter implies that there exists a vector $v\in\mathbb{R}^2$ such that for any $i\in I$ and $j\in J$, $\<v,\xi_i\><\<v,\xi_j\>$, where $\<\cdot,\cdot\>$ denotes the standard inner product in $\mathbb{R}^2$. Since $|I|\leq |J|$, we have
\[\sum_{i\in I}\<v,\xi_i\><\sum_{j\in J}\<v,\xi_j\>,\]
and therefore $\sum_{i\in I}\xi_i\neq \sum_{j\in J}\xi_j.$

Now if we have a sequence $\S=(S_0,S_1,\dots,S_r=S_0)$ of subsets, we can always view it as a piecewise linear closed curve $\zeta_{\S}$ obtained by concatenating the line-segments connecting consecutive points $S_{i-1}$ and $S_i$ for $i=1,2\ldots,r$. We will see that if $\S$ satisfies certain properties then the corresponding curve will be simple (i.e. non self-intersecting). Let us consider the easiest example of such a curve:

\begin{dfn} (see \cite{DKK14})
A \textit{simple cyclic pattern} is a sequence $\S=(S_1,S_2,\ldots,S_r=S_0)$ of subsets of $[n]$ such that
\begin{enumerate}
 \item $\S$ is weakly separated;
 \item the sets in $\S$ are pairwise distinct;
 \item $|S_{i-1}\bigtriangleup S_i|=1$ for $i=1,\dots,r$.
\end{enumerate}
For $i\in [r]$, $S_i$ is called a \textit{slope} if $|S_{i-1}| \neq |S_{i+1}|$.
\end{dfn}
Let $\D_{\S}:=\{X \subset [n] \mid X \textrm{ is weakly separated from } \S\}$. We will describe bellow how to write $\D_{\S}$ as a union of two pure domains: $\D_{\S}^\inn$ and $\D_{\S}^\outt$.
For $h=0,1,\ldots,n$, let:
$$\A_h:=\{S_i \mid S_i \textrm{ is a slope and } |S_i|=h\},$$
$$\X_h:=\{X \in \D_{\S} \mid |X|=h \textrm{ and } S_i \prec X \textrm{ for an odd number of } S_i \in \A_h\},$$
$$\Y_h:=\{X \in \D_{\S} \mid |X|=h \textrm{ and } S_i \prec X \textrm{ for an even number of } S_i \in \A_h\}.$$

\begin{dfn} (see \cite{DKK14})
\label{dfn:in_out}
$$\D_{\S}^\inn:=\S \cup \X_0 \cup \X_1 \cup \ldots \cup \X_n,
\D_{\S}^\outt:=\S \cup \Y_0 \cup \Y_1 \cup \ldots \cup \Y_n.$$
\end{dfn}

Note that $$\D_{\S}^\inn \cap \D_{\S}^\outt= \S \textrm{ and } \D_{\S}^\inn \cup \D_{\S}^\outt= \D_{\S}.$$

\begin{thm} (see \cite{DKK14})\label{thm:purity_of_simplecyclic}
For a simple cyclic pattern $\S$, the domains $\D_{\S}^\inn$ and $\D_{\S}^\outt$ are pure. Moreover, every pair $X,Y$ such that $X \in \D_{\S}^\inn$ and $Y\in \D_{\S}^\outt$ is weakly separated.
\end{thm}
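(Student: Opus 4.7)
The plan is to argue geometrically using the embedding $I \mapsto \xi_I := \sum_{i \in I} \xi_i$ into the zonogon $Z_n$, and to show that $\zeta_\S$ is a simple closed curve whose interior and exterior regions correspond bijectively to $\D_\S^\inn \setminus \S$ and $\D_\S^\outt \setminus \S$.

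First I would check that $\zeta_\S$ is actually a \emph{simple} closed curve. By hypothesis $|S_{i-1} \bigtriangleup S_i|=1$, so each edge of $\zeta_\S$ is a translate of some $\pm\xi_j$, and the weak separation of $\S$ forces the sequence of directions around $\zeta_\S$ to rotate monotonically (in a sense analogous to Leclerc--Zelevinsky's ``rhombic'' strips). Any self-intersection of $\zeta_\S$ between two non-adjacent edges would exhibit two sets in $\S$ whose associated minors fail the surrounding condition of Remark~\ref{rmk:ws_definition}, contradicting weak separation of $\S$. This simple closed curve bounds an interior region $R^\inn$ and an exterior region $R^\outt$ inside $Z_n$.

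Next I would give a Jordan-curve interpretation of the parity condition defining $\X_h$ and $\Y_h$. Fix $h$ and a set $X$ with $|X|=h$ weakly separated from $\S$. A generic oriented ray emanating from $\xi_X$ in $Z_n$ crosses $\zeta_\S$ only at edges separating a set of size $h{-}1$ from a set of size $h{+}1$, namely at slopes $S_i \in \A_h$, and the condition $S_i \prec X$ singles out precisely those slopes whose corresponding edge lies on the ``correct side'' of $X$, i.e., is actually crossed by the ray. Thus the parity in the definitions of $\X_h,\Y_h$ is the $\bmod\ 2$ winding number of $X$ with respect to $\zeta_\S$, which by the Jordan curve theorem equals the indicator of $\xi_X \in R^\inn$. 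This yields $\D_\S^\inn \cap \D_\S^\outt = \S$ and $\D_\S^\inn \cup \D_\S^\outt = \D_\S$.

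The mutual weak separation claim can then be handled by contradiction: if $X \in \D_\S^\inn$ and $Y \in \D_\S^\outt$ were not weakly separated, Remark~\ref{rmk:ws_definition} (extended to unequal sizes via surrounding) would produce a linear functional separating $\xi_{X \setminus Y}$ from $\xi_{Y \setminus X}$, and translating this into a segment in $Z_n$ connecting $\xi_X$ and $\xi_Y$ shows it must cross $\zeta_\S$ in a way that forces some $S_j \in \S$ to fail to be weakly separated from $X$ or $Y$, contradicting $X, Y \in \D_\S$.

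The hard step is purity of $\D_\S^\inn$ itself (and symmetrically $\D_\S^\outt$). Here I would follow DKK's combined-tiling approach: a maximal weakly separated collection $\W \subset \D_\S^\inn$ corresponds to a tiling of the closed region $\overline{R^\inn}$ by elementary rhombi and ``lenses'' whose vertices are exactly the points $\xi_T$ for $T \in \W$. Euler's formula applied to this tiling expresses $|\W|$ purely in terms of the geometry of $R^\inn$ (number of edges of $\zeta_\S$, signed areas of the rhombi covering $R^\inn$, and the boundary contribution of $\S$), giving a value independent of $\W$. Verifying that maximal weakly separated collections are in bijection with such tilings, and that the tiling-to-collection correspondence is complete, is the technical heart of the argument and the principal obstacle; an alternative is to show that any two maximal collections in $\D_\S^\inn$ are connected by square moves of Proposition~\ref{prop:mutations} (which preserve both size and the inside/outside dichotomy, since a mutation changes $Sac$ to $Sbd$ at the same level $|S|+2$ and keeps the parity count in $\X_{|S|+2}$ invariant modulo replacing one crossing by another), reducing purity to connectivity of this mutation graph. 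Either route gives the common cardinality and completes the proof.
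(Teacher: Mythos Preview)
The paper does not prove this theorem. It is quoted from \cite{DKK14} and used as a black box (most prominently in Section~\ref{section:proofLR}); the accompanying Proposition~\ref{prop:geometricdef} is likewise only cited. There is therefore no ``paper's own proof'' to compare your proposal against.

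Your sketch does trace the broad shape of the Danilov--Karzanov--Koshevoy argument: embed into the zonogon, show $\zeta_\S$ is simple, identify the parity count in $\X_h/\Y_h$ with the mod~$2$ winding number, and then invoke combined tilings for purity. But as written it is a plan, not a proof, and a couple of the steps are looser than you seem to acknowledge. In your Jordan-curve step, the assertion that a generic ray from $\xi_X$ meets $\zeta_\S$ ``only at edges separating a set of size $h{-}1$ from a set of size $h{+}1$'' is not what happens: every edge of $\zeta_\S$ joins sets whose sizes differ by~$1$, and a horizontal ray at level~$h$ can in principle meet many of them. What one actually needs is to choose the ray in a fixed ``horizontal'' direction so that crossings at level~$h$ occur precisely at the slopes in $\A_h$, and then to check that the crossing condition matches $S_i\prec X$; this is where the real work in \cite{DKK14} is. Your mutual-separation paragraph is also too vague: the existence of a separating functional for $X\setminus Y$ and $Y\setminus X$ does not by itself force any particular $S_j\in\S$ to fail weak separation with $X$ or $Y$; DKK argue this via the structure of their tilings, not by a direct linear-algebra contradiction. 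Finally, as you yourself note, the purity step is the substantive content and neither of the two routes you name (combined tilings, mutation connectivity) is carried out. If the goal is a self-contained proof, you will need to go to \cite{DKK14} for the details.
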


From the geometric point of view, the following proposition holds:

\begin{prop} (see \cite{DKK14})\label{prop:geometricdef}
\begin{enumerate}
 \item For a simple cyclic pattern $\S$, the curve $\zeta_{\S}$ is non-self-intersecting,
and therefore it subdivides $Z_n$ into two closed regions $\R_{\S}^\inn$ and $\R_{\S}^\outt$ such that
$\R_{\S}^\inn \cap \R_{\S}^\outt= \zeta_\S \textrm{ and } \R_{\S}^\inn \cup \R_{\S}^\outt= Z_n.$
\item We have $\D_{\S}^\inn=\D_{\S} \cap \R_{\S}^\inn$ and $\D_{\S}^\outt=\D_{\S} \cap \R_{\S}^\outt$.
\end{enumerate}
\end{prop}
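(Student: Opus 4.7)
The plan has two stages. Stage~1 establishes the topological content of part~(1): $\zeta_\S$ is a simple closed polygonal curve, so that the Jordan curve theorem decomposes $Z_n$ into $\R_\S^\inn$ and $\R_\S^\outt$. Stage~2 matches the combinatorial partition of Definition~\ref{dfn:in_out} with this geometric one via a crossing-parity argument.

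For stage~1, each edge $[S_{i-1}, S_i]$ of $\zeta_\S$ is a translate of $\pm\xi_a$ for the unique $a\in S_{i-1}\bigtriangleup S_i$. Simplicity reduces to ruling out two kinds of bad intersections: (a) two edges with the same direction $\pm\xi_a$ overlapping in a nondegenerate segment; and (b) two edges with distinct directions $\pm\xi_a,\pm\xi_b$ crossing transversely at an interior point. Case~(a) forces two distinct sets in $\S$ to map to the same point of $Z_n$, contradicting the fact (proved just after the definition of $Z_n$) that distinct weakly separated sets have distinct images. Case~(b) produces an explicit parallelogram-shaped configuration of four sets; analyzing the coefficients of the $\xi_k$'s appearing in the difference $S_{j-1}-S_{i-1}$, together with the convex position of $\xi_1,\dots,\xi_n$, extracts from this configuration a pair in $\S$ failing the surround-comparability required for weak separation, contradicting condition~(1) in the definition of a simple cyclic pattern. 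Once $\zeta_\S$ is shown to be a simple closed curve in $\mathbb{R}^2$, the Jordan curve theorem provides bounded and unbounded components of its complement; intersecting with the convex set $Z_n$ and taking closures gives $\R_\S^\inn$ and $\R_\S^\outt$, and the union/intersection identities are immediate.

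For stage~2, fix $X\in\D_\S\setminus\S$ of size $h$. I would determine the side of $X$ by a crossing count: choose a generic continuous path $\gamma\subset Z_n$ from $X$ to a point on $\partial Z_n$ (which lies in $\R_\S^\outt$) that avoids all vertices of $\zeta_\S$; then $X\in\R_\S^\inn$ iff $\gamma$ meets $\zeta_\S$ in an odd number of transverse crossings. The key geometric step is to route $\gamma$ so that the only crossings with $\zeta_\S$ occur at edges incident to slopes of size exactly $h$, and then to check that each such crossing at a slope $S_i\in\A_h$ contributes to the count precisely when $S_i\prec X$. Granting this, the total crossing parity equals the parity of $|\{S_i\in\A_h : S_i\prec X\}|$, which matches the definitions of $\X_h$ and $\Y_h$ exactly: $X\in\X_h$ iff $X\in\R_\S^\inn$, and $X\in\Y_h$ iff $X\in\R_\S^\outt$. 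Adjoining $\S$ and taking the union over $h$ yields the two equalities claimed in part~(2).

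The main obstacle, and the technical heart of the proof, is the geometric-combinatorial dictionary in stage~2: the identification of ``$\gamma$ crosses $\zeta_\S$ near the slope $S_i$'' with the combinatorial condition $S_i\prec X$ requires both a local analysis at each slope (using that the direction of $\zeta_\S$ changes from ``size-increasing'' to ``size-decreasing'' at local size-extrema) and a global argument that the order $\prec$ on $h$-element subsets of $[n]$ agrees with the natural order of their images along the level-$h$ portion of $Z_n$ traversed by $\gamma$. This dictionary, together with the parallelogram case analysis in stage~1, should be the main technical work; once it is in place, the rest of the proof is routine Jordan-curve and crossing-count machinery.
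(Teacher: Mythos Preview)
The paper does not give its own proof of this proposition: it is stated with the attribution ``(see \cite{DKK14})'' and no argument follows. So there is no in-paper proof to compare against; the authors are simply importing the result from Danilov--Karzanov--Koshevoy.

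Your proposal is a reasonable outline of how one might prove the statement from scratch, and the overall architecture---Jordan curve theorem for part~(1), crossing-parity argument for part~(2)---is sound. A couple of points deserve caution. In Stage~1, case~(a), the claim that an overlap of two parallel edges forces two distinct elements of $\S$ to have the same image in $Z_n$ is not immediate: two segments in the same direction can overlap without sharing an endpoint, so you would need an additional argument (e.g.\ using integrality, or the fact that the sizes of adjacent sets differ by exactly one) to reach a contradiction. In Stage~2, you correctly flag the main technical content: matching the combinatorial condition ``$S_i\prec X$'' with the geometric condition ``$\gamma$ crosses $\zeta_\S$ near the slope $S_i$'' is the real work, and your outline does not yet supply it. In particular, routing $\gamma$ so that it only crosses edges incident to slopes of size $h$, and then establishing that $\prec$ on $h$-subsets agrees with the linear order along the relevant level set, requires a careful choice of $\gamma$ (for instance, staying within the affine slice $\{\sum_i \lambda_i = h\}$ and moving monotonically with respect to a suitable linear functional) and a verification that weak separation of $X$ from $\S$ prevents ambiguous or spurious crossings. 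These are the steps that carry the substance of the proof in \cite{DKK14}.
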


Besides simple cyclic patterns we will also need \emph{generalized cyclic patterns}. However, we will only need them for the case when all sets from $\S$ have the same size, so we give simplified versions of a definition and a theorem from \cite{DKK14}.

\begin{dfn}
A \textit{generalized cyclic pattern} is a sequence $\S=(S_1,S_2,\ldots,S_r=S_0)$ of subsets of $[n]$ such that
\begin{enumerate}
 \item $\S$ is weakly separated;
 \item the sets in $\S$ are pairwise distinct;
 \item the sets in $\S$ all have the same size;
 \item $|S_{i-1} \bigtriangleup S_i|=2$.
\end{enumerate}
\end{dfn}

For a generalized cyclic pattern, we define $\D_{\S}$ to be the set of all $X\subset [n]$ that are weakly separated from all elements in $\S$ such that the size of $X$ is the same as the size of the elements in $\S$. This is another restriction on sizes which does not appear in \cite{DKK14}.

Unfortunately, there is no definition of $\D_\S^\inn$ in the literature similar to Definition \ref{dfn:in_out}. Following \cite{DKK14}, in order to define $\D_{\S}^\inn$ and $\D_{\S}^\outt$, we use the geometric construction from above. Part (2) in Proposition~\ref{prop:geometricdef} serves as the definition in this case:

\begin{dfn}
For a generalized cyclic pattern $\S$ satisfying properties (\ref{item:non_self_int_1}) and (\ref{item:non_self_int_2}) below, the curve $\zeta_{\S}$ is non-self-intersecting,
and therefore it subdivides $Z_n$ into two closed regions $\R_{\S}^\inn$ and $\R_{\S}^\outt$ such that
\[\R_{\S}^\inn \cap \R_{\S}^\outt= \zeta_\S \textrm{ and } \R_{\S}^\inn \cup \R_{\S}^\outt= Z_n.\]
In this case we define the domains $\D_{\S}^\inn=\D_{\S} \cap \R_{\S}^\inn$ and $\D_{\S}^\outt=\D_{\S} \cap \R_{\S}^\outt$.
\end{dfn}

\begin{thm}[see \cite{DKK14}]\label{thm:purity_of_generalizedcyclic}
Let $\S$ be a generalized cyclic pattern (with subsets of the same size) satisfying the following two properties:\\
\begin{enumerate}
 \item \label{item:non_self_int_1}$\S$ contains no quadruple $S_{p-1},S_p,S_{q-1},S_q$ such that $\{S_{p-1},S_p\}=\{Xi,Xk\}$ and $\{S_{q-1},S_q\}=\{Xj,Xl\}$, where $i<j<k<l$;
 \item \label{item:non_self_int_2} $\S$ contains no quadruple $S_{p-1},S_p,S_{q-1},S_q$ such that $\{S_{p-1},S_p\}=\{X \setminus i,X \setminus k\}$ and $\{S_{q-1},S_q\}=\{X \setminus j,X \setminus l\}$, where $i<j<k<l$.
\end{enumerate}
Then the domains $\D_{\S}^\inn$ and $\D_{\S}^\outt$ are pure, and every element of $\D_{\S}^\inn$ is weakly separated from any element of $\D_{\S}^\outt$.
\end{thm}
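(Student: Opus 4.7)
My plan follows a geometric/tiling approach in three ingredients: (i) verify that the polygonal curve $\zeta_\S$ is simple under the hypotheses (\ref{item:non_self_int_1}) and (\ref{item:non_self_int_2}); (ii) prove that any $X\in\D_\S^\inn$ is weakly separated from any $Y\in\D_\S^\outt$ by translating the geometric separation across $\zeta_\S$ into the combinatorial surrounding condition; (iii) establish purity of $\D_\S^\inn$ and $\D_\S^\outt$ via an Euler-style count of a tiling of $\R_\S^\inn$ (respectively $\R_\S^\outt$) by triangles whose vertices are the zonotope images of triples of pairwise weakly separated sets of $\W$.

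For step (i), each edge of $\zeta_\S$ corresponds to a pair $\{S_{i-1},S_i\}=\{X\cup\{a\},X\cup\{b\}\}$ and is a translate of $\xi_b-\xi_a$ based at the zonotope image of $X$. A direct case analysis shows that the interiors of two such edges can meet only if the four labels involved are cyclically interleaved and the underlying base sets differ by exactly these labels in one of two dual configurations, precisely the configurations excluded by (\ref{item:non_self_int_1}) and (\ref{item:non_self_int_2}). Hence $\zeta_\S$ is a simple closed curve, it bounds two regions $\R_\S^\inn$ and $\R_\S^\outt$, and the definitions of $\D_\S^\inn$ and $\D_\S^\outt$ become unambiguous.

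For step (ii), I would start from the observation already used in the excerpt: if one set of a given size surrounds another, then a suitable linear functional separates their $\xi$-representatives. Given $X$ and $Y$ as above, $\zeta_\S$ geometrically separates their zonotope images; choose a linear functional $v$ whose level set passes transversally through $\zeta_\S$ between them. Since every $S_i\in\S$ is weakly separated from both $X$ and $Y$, the $v$-values on the elements of $X\triangle Y$ are constrained in such a way that $X\setminus Y$ and $Y\setminus X$ end up on opposite sides of a chord of the $n$-gon, which by Remark~\ref{rmk:ws_definition} means one surrounds the other.

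Step (iii) would proceed by setting up a bijection between maximal weakly separated collections $\W\subset\D_\S^\inn$ and tilings of $\R_\S^\inn$ whose $0$-cells are the zonotope images of the sets in $\W$. The cardinality of $\W$ is then determined by the Euler characteristic of the tiling, which depends only on the region, so purity follows; local flip moves between tilings realize the mutations of Proposition~\ref{prop:mutations}, giving mutation-connectivity as a bonus. The main obstacle is step (ii): the separating functional $v$ that works for the two barycenters need not induce the correct cyclic ordering on the individual $\xi_i$'s, so one must exploit the local slope structure of $\zeta_\S$ near the chord from the representative of $X$ to that of $Y$ and use the weak separation of each $S_i\in\S$ from both $X$ and $Y$ to bootstrap point-level separation into the full cardinality-and-order separation demanded by weak separation.
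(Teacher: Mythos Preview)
This theorem is not proved in the present paper; it is quoted from \cite{DKK14}, so there is no in-paper proof to compare against. Your outline is in the right spirit (the DKK14 argument does proceed through tilings of the regions bounded by $\zeta_\S$), but there is a genuine gap in step~(ii).

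The gap is the passage from ``$\zeta_\S$ geometrically separates the zonotope images of $X$ and $Y$'' to ``there is a linear functional $v$ whose level set passes between them and witnesses weak separation.'' A simple closed curve in the plane need not be star-shaped or convex, so there is no reason a single line separates a given interior point from a given exterior point; any line can cross $\zeta_\S$ many times. Even if you produced such a $v$, you would only know $\langle v,\sum_{i\in X}\xi_i\rangle<\langle v,\sum_{j\in Y}\xi_j\rangle$, which is far weaker than the pointwise statement $\langle v,\xi_i\rangle<\langle v,\xi_j\rangle$ for $i\in X\setminus Y$, $j\in Y\setminus X$ that encodes surrounding. The easy implication goes the other way (surrounding $\Rightarrow$ separating functional for the barycenters), and you are trying to reverse it with no mechanism in place. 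Your closing paragraph acknowledges exactly this obstacle but does not resolve it: invoking ``the local slope structure of $\zeta_\S$'' and the weak separation of each $S_i$ from $X$ and $Y$ is not an argument, and in fact counterexamples to the naive functional approach exist already for short patterns.

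What \cite{DKK14} actually does is different: they build a global object (a \emph{combined tiling}) on all of $Z_n$ whose vertex set is a maximal weakly separated collection, show that $\zeta_\S$ embeds as a subcomplex, and then the in/out separation statement becomes a structural fact about how tiles on opposite sides of the curve interact, rather than a linear-algebra separation. Purity is then read off from the tiling as you suggest in step~(iii), but only after this machinery is in place. Your step~(iii) sketch is reasonable once (ii) is established, but (ii) as written does not go through.
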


\subsection{Grassmann necklaces and decorated permutations}

In this subsection, all the definitions and results are from \cite{P} and \cite{PBig}.
We now define \textit{Grassmann necklaces} and several objects associated with them. A Grassmann necklace is an important instance of a generalized cyclic pattern, and one reason for that is that for the case of Grassmann necklaces the ranks of $\D_\S^\inn$ and $\D_\S^\outt$ can be calculated explicitly.
\begin{dfn}\label{dfn:dec_perm}
A \textit{Grassmann necklace} is a sequence $\I =(I_1,\ldots,I_n,I_{n+1}=I_1)$ of $k$-element subsets of $[n]$ such that for all $i \in [n]$,
\begin{equation}\label{phi}
I_{i+1}=\left\{
                     \begin{array}{ll}
                       I_i\setminus \{i\} \cup \{j\} \textrm{ for some } j \in [n], & \hbox{if $i \in I_i$;} \\
                       I_i, & \hbox{if $i \notin I_i$}
                     \end{array}
                   \right.
\end{equation}
$\I$ is called \textit{connected} if $I_i \neq I_j$ for $i \neq j$. It is easy to check that every connected Grassmann necklace is a generalized cyclic pattern. Every non-connected Grassmann necklace $\I$ can be transformed into a generalized cyclic pattern denoted $\Ired$ by removing all the adjacent repetitions from $\I$.

\end{dfn}

\begin{remark} \label{rmk:positroid}
It follows also from \cite{P} that the collection $\D_\Ired^\inn$ admits a simpler description:
$$\D_\Ired^\inn=\D_\I\cap \Big\{J \in {[n] \choose k} \mid I_i\prec_i J \textrm{ for all } i\in [n] \Big\}.$$
In other words, $\D_\Ired^\inn$ contains all elements of $\D_\Ired$ that are \emph{inside the positroid} associated with $\I$. For the definition of a positroid, see~\cite[Definition 4.2]{P}.
\end{remark}

It turns out that a convenient way of encoding a Grassmann necklace is to use \textit{decorated permutations}.

\begin{dfn}\label{dfn:decorated_perm}
A \emph{decorated permutation} $\pii=(\pi,\col)$ is a permutation $\pi \in \Sfrak_n$ together with a coloring function $\col$ from the set of fixed points $\{i \mid \pi(i) = i \}$ to  $\{1,-1\}$. For $i,j \in [n]$, $\{i,j\}$ forms an \textit{alignment in} $\pi$ if $i,\pi(i),\pi(j),j$ are cyclically ordered (and all distinct). The number of alignments in $\pi$ is denoted by $\Alignments(\pi)$, and the \textit{length} $\ell(\pii)$ is defined to be $k(n-k)-\Alignments(\pi)$.
\end{dfn}
We now describe a bijection between Grassmann necklaces and decorated permutations. Given a Grassmann necklace $\I$, define $\pii_\I=(\pi_\I,\col_\I)$ as follows:
\begin{itemize}
  \item If $I_{i+1}=I_i\setminus \{i\} \cup \{j\}$ for $j \neq i$ then $\pi_\I(i)=j$.
  \item If $I_{i+1}=I_i$ and $i \notin I_i$ (resp., $i \in I_i$) then $\pi_\I(i)=i$ and $\col_\I(i)=1$ (resp., $\col_\I(i)=-1$).
\end{itemize}
We refer the reader to \cite{P} for the construction of the inverse of this map.


We define $\ell(\I)$ to be $\ell(\pii_\I)$, where $\pii_\I$ is the associated decorated permutation of $\I$.
\begin{thm}[see \cite{P}]\label{thm:weak_separation_necklace}
Fix any Grassmann necklace $\I$. Every maximal weakly separated
collection in $\D_{\I}^\inn$ has cardinality $\ell(\I)+1$. Any two maximal weakly separated
collections in $\D_{\I}^\inn$ are linked by a sequence of mutations.
\end{thm}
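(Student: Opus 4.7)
The plan is to reduce the theorem to the combinatorics of \emph{plabic graphs}, the tool introduced by Postnikov for exactly this purpose. Recall that a plabic graph $G$ is a planar graph drawn in a disk with $n$ labeled boundary vertices and internal vertices colored black or white; its \emph{trip permutation} $\pii_G$ is obtained by following paths that turn maximally right at each black vertex and maximally left at each white one. To each face $f$ of a reduced plabic graph $G$ with $\pii_G = \pii_\I$ one associates a label $F(f) \subset [n]$: put $i \in F(f)$ iff the trip starting at boundary vertex $i$ has $f$ on its left. The forward direction of the proof consists of the statements, all established in~\cite{P}, that (i) all such labels have the same size $k$, (ii) the collection of face labels is weakly separated, (iii) the boundary face labels reconstruct exactly the Grassmann necklace $\I$, so by Remark~\ref{rmk:positroid} all face labels lie in $\D_\I^\inn$, and (iv) the total number of faces equals $\ell(\I)+1$. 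Point (iv) I would verify by showing that both the face count and the alignment count $\Alignments(\pi)$ transform in a matched way under the elementary local moves on plabic graphs, so the identity can be checked on a single normal form (e.g.\ a rectangles-diagram associated with the positroid).

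The heart of the proof, and where I expect the main obstacle, is the converse: every maximal weakly separated $\W \subset \D_\I^\inn$ is the face-label collection of some reduced plabic graph with trip permutation $\pii_\I$. The strategy is to realize $\W$ geometrically as a \emph{plabic tiling}, using the point map $I \mapsto \sum_{i \in I}\xi_i$ from Section~\ref{subsect:dkk}: the pairwise weak separation condition forces the symmetric-difference polygons associated to elements of $\W$ to fit together into a planar subdivision of a region determined by $\I$. If $\W$ does not already have size $\ell(\I)+1$, I would exhibit an interior region of this subdivision that is not a minimal black/white triangle pair, extract from it a square-move configuration $Sab, Sbc, Scd, Sda \in \W$ with neither $Sac$ nor $Sbd$ in $\W$, and show that at least one of $Sac, Sbd$ may be adjoined while preserving both weak separation and membership in $\D_\I^\inn$, contradicting maximality. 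The most delicate point is maintaining membership in $\D_\I^\inn$: here one would use the explicit description $I_i \prec_i J$ for all $i$ from Remark~\ref{rmk:positroid}, together with the observation that the candidate $k$-subset lies in the convex region bounded by its four neighbors, all of which already satisfy this condition, so the $\prec_i$-inequalities propagate.

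Once the bijection between maximal weakly separated subcollections of $\D_\I^\inn$ and reduced plabic graphs with trip permutation $\pii_\I$ is established, both conclusions follow at once. All such plabic graphs share the common face count $\ell(\I)+1$, giving the purity statement. For mutation-connectedness, one invokes Postnikov's reduction theorem that any two reduced plabic graphs with the same trip permutation are connected by a sequence of square moves; by construction, each square move on a plabic graph is precisely a mutation of the face-label collection in the sense of Proposition~\ref{prop:mutations}, and therefore any two maximal weakly separated collections in $\D_\I^\inn$ are connected by a sequence of such mutations.
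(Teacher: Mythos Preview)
The paper does not prove this theorem at all: it is stated with the attribution ``see~\cite{P}'' and is imported wholesale as a background result from Oh--Postnikov--Speyer. There is therefore no proof in the paper to compare against.

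That said, your outline is an accurate summary of the argument in~\cite{P}. The forward direction (face labels of a reduced plabic graph with trip permutation $\pii_\I$ form a weakly separated collection in $\D_\I^\inn$ of size $\ell(\I)+1$) and the mutation-connectedness via Postnikov's move-equivalence theorem are exactly as you describe. The converse is also handled in~\cite{P} via plabic tilings, essentially as you sketch, though the actual argument there does not proceed by finding a square configuration and adjoining $Sac$ or $Sbd$; rather, one shows directly that the tiling associated to a maximal $\W$ is already a complete plabic tiling (every $2$-cell is a black or white clique), from which the dual plabic graph is read off. Your proposed contradiction step---locating an under-tiled region and inserting a new element---would work but is more roundabout than necessary, and the ``convexity'' justification you give for preserving the $\prec_i$-conditions is too vague to stand as written: the positroid condition $I_i \prec_i J$ is not obviously preserved under taking elements in a geometric convex hull, and in~\cite{P} this is handled by a separate combinatorial argument.
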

Note that Theorem~\ref{thm:purity_of_nchoosek} is a special case of the theorem above, by setting $I_i=\{i,i+1,\ldots,i+k-1\} \subset [n]$ for all $i$ (the entries are taken modulo $n$).

\subsection{Plabic graphs}
Another reason for the importance of Grassmann necklaces is that there is an especially nice geometric intuition that helps to understand the structure of the corresponding weakly separated collections. In this subsection, we continue citing the results from \cite{P} and \cite{PBig}.

\begin{dfn}
A \textit{plabic graph} (planar bicolored graph) is a planar undirected graph $G$ drawn inside a disk with vertices colored in black or white colors. The vertices on the boundary of the disk, called the \emph{boundary vertices}, are labeled in clockwise order by the elements of $[n]$.
\end{dfn}
\begin{dfn}
A \textit{strand} in a plabic graph $G$ is a directed path that satisfies the ``rules of the road'': at every black vertex it makes a sharp right turn, and at every white vertex it makes a sharp left turn.
\end{dfn}
\begin{dfn}(see \cite[Theorem 13.2]{PBig}) \label{dfn:reduced}
A plabic graph $G$ is called \textit{reduced} if the following holds:
\begin{itemize}
  \item A strand cannot be a closed loop in the interior of $G$.
  \item If a strand passes through the same edge twice then it must be a simple loop that starts and ends at a boundary leaf.
  \item Given any two strands, if they have two edges $e$ and $e'$ in common then one strand should be directed from $e$ to $e'$ while the other strand should be directed from $e'$ to $e$.
\end{itemize}
\end{dfn}
Any strand in a reduced plabic graph $G$ connects two boundary vertices. We associate a decorated permutation (also called \textit{strand permutation}) $\pii_G=(\pi_G,\col_G)$ with $G$ for which $\pi_G(j)=i$ if the strand that starts at a boundary vertex $j$ ends at a boundary vertex $i$. We say that such strand \emph{is labeled by} $i$. If $\pi_G(i)=i$ then $i$ must be connected to a boundary leaf $v$, and $\col(i)=+1$ if $v$ is white and $\col(i)=-1$ if $v$ is black.

Let us now describe three types of moves on a plabic graph that preserve its decorated permutation:

(M1) Pick a square with trivalent vertices alternating in colors as in Figure~\ref{move1}. Then we can switch the colors of all the vertices.\\
\begin{figure}[h!]
\centering
\includegraphics[height=0.6in]{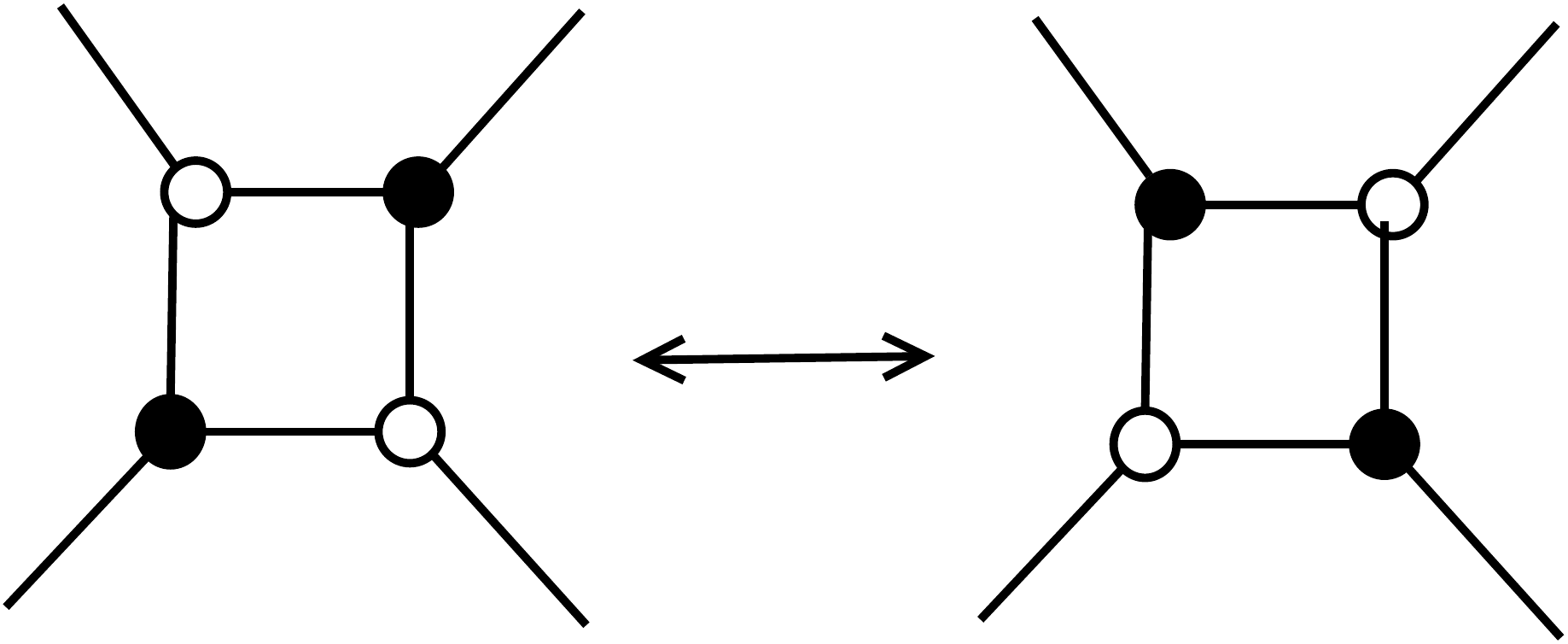}
\caption{(M1) Square move}
\label{move1}
\end{figure}

(M2) Given two adjoint vertices of the same color, we can contract them into one vertex as in Figure~\ref{move2}.\\
\begin{figure}[h!]
\centering
\includegraphics[height=0.4in]{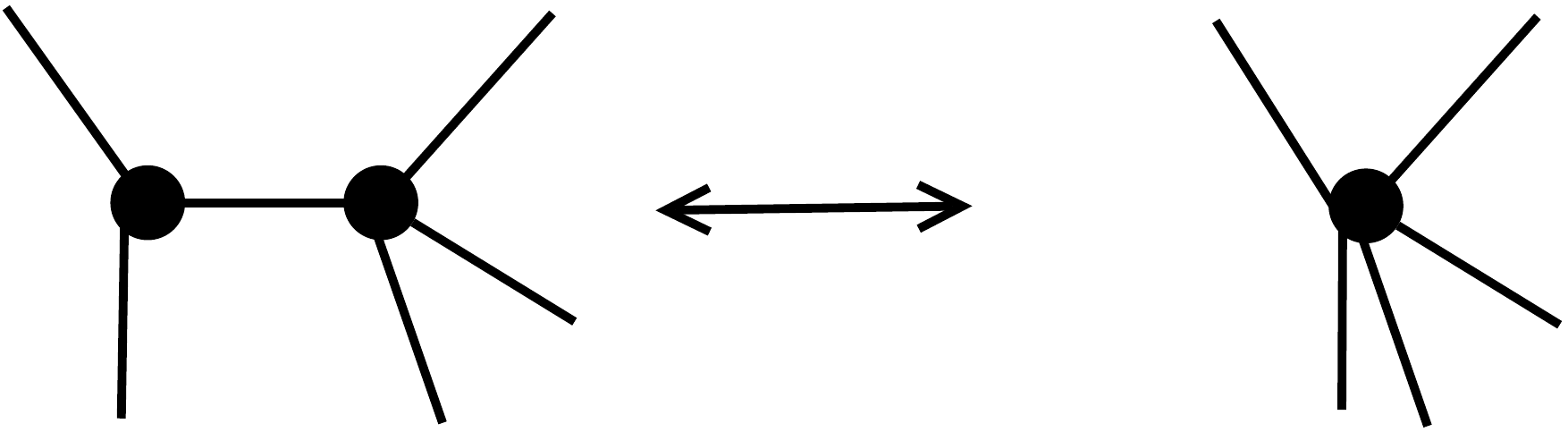}
\caption{(M2)}
\label{move2}
\end{figure}

(M3) We can insert or remove a vertex inside an edge. See Figure~\ref{move3}.
\begin{figure}[h!]
\centering
\includegraphics[height=0.08in]{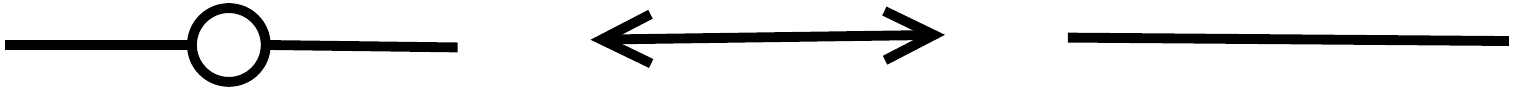}
\caption{(M3)}
\label{move3}
\end{figure}

Figures \ref{move1},\ref{move2}, and \ref{move3} are taken from \cite{PBig}.

\begin{thm}[see \cite{PBig}]
Let $G$ and $G'$ be two reduced plabic graphs with the same number of boundary vertices. Then $\pi_G=\pi_{G'}$ if and only if $G'$ can be obtained from $G$ by a sequence of moves (M1)–(M3).
\end{thm}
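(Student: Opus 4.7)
The plan is to prove the two directions separately. The \emph{if} direction is a local verification: for each of (M1), (M2), (M3) one checks that the strands entering the pictured neighborhood exit through the same boundary points in the same pairing, and that reducedness (Definition~\ref{dfn:reduced}) is preserved. For (M1) this reduces to observing that in a trivalent square of alternating colors, a strand entering on any side turns consistently into the same outgoing side irrespective of which of the two alternating colorings is used; for (M2) and (M3) the verification is even more transparent since they only alter a single color-uniform vertex or a single subdivision point, which no strand "sees" globally.

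For the \emph{only if} direction, the strategy is to construct a canonical \emph{normal form} $G_\pii$ for a reduced plabic graph depending only on the decorated permutation $\pii$, and then to show that any reduced plabic graph with strand permutation $\pii$ can be transformed by (M1)--(M3) into $G_\pii$. The natural choice is a BCFW-style bridge decomposition: pick an index $i$ such that uncrossing the strands meeting at positions $i,i+1$ on the boundary decreases $\ell(\pii)$ by exactly one, attach the corresponding bridge gadget (a black--white edge spanning the $(i,i+1)$ boundary segment), and recurse on the smaller decorated permutation $\pii'$. One then shows that different admissible bridge orderings give normal forms related by (M1) together with pairs of commuting bridges whose equivalence via (M2), (M3) is a direct local computation, so that $G_\pii$ is well defined up to moves.

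The main induction is on $\ell(\pii)$. The base case $\ell(\pii)=0$ produces a plabic graph with no interior crossings, and any such reduced graph is equivalent under (M2)--(M3) to a disjoint union of boundary leaves compatible with the fixed-point coloring of $\pii$. For the inductive step, given reduced plabic graphs $G,G'$ with $\pi_G=\pi_{G'}=\pi$, I would use (M2) and (M3) to \emph{migrate} one crossing of each graph out to the chosen boundary bridge location; removing that common bridge yields two reduced plabic graphs realizing the smaller decorated permutation $\pii'$, so by the inductive hypothesis they are linked by moves, and reinstalling the bridge links $G$ to $G'$.

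The hard part will be the migration step. Pushing a strand crossing to a prescribed boundary edge without breaking the reducedness conditions of Definition~\ref{dfn:reduced} requires that every intervening obstruction be resolvable by (M1)--(M3). This is essentially Postnikov's central technical lemma on reduced plabic graphs, equivalent to the statement that two strands in a reduced graph cross at most once (no "parallel bigons") and that any excess crossing can be removed by moves. Once this local normalization lemma is in place, the inductive recursion above is straightforward bookkeeping.
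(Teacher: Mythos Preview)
The paper does not prove this theorem at all: it is stated with the attribution ``see \cite{PBig}'' and no proof is given, since it is quoted as background from Postnikov's work on total positivity and plabic graphs. There is therefore nothing in the paper to compare your proposal against.

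As for the content of your sketch, it is a reasonable high-level outline of one way to approach the result, and is close in spirit to the bridge-decomposition (BCFW-style) arguments that appear in the literature building on \cite{PBig}. You correctly identify the substantive difficulty: the ``migration'' lemma guaranteeing that a crossing can be pushed to a prescribed boundary location using only (M1)--(M3) while preserving reducedness. That lemma is genuinely nontrivial and is where essentially all the work lies; what you have written is a plan rather than a proof, and the plan stands or falls on that step. Postnikov's original argument in \cite{PBig} is organized somewhat differently (via reductions and a characterization of reducedness in terms of the number of faces), but your inductive scheme on $\ell(\pii)$ is a legitimate alternative route provided the migration lemma is established.
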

We conclude this subsection with a theorem from~\cite{P} that describes the relation between reduced plabic graphs and weakly separated collections. We first describe a certain labeling of the faces by subsets of $[n]$. Given a reduced plabic graph $G$, we place $i$ inside every face $F$ that appears to the left of the strand $i$. We apply this process for every $i \in [n]$, and the label of $F$ is the set of all $i$'s placed inside $F$. We denote by $\F(G)$ the collection of labels that occur on each face of the graph $G$. It was shown in \cite{PBig} that all the faces in $G$ are labeled by the same number of strands.
\begin{thm}[see \cite{P}]
For a decorated permutation $\pii_\I$ and the corresponding Grassmann necklace $\I$, a collection $\CC$ is a maximal weakly separated collection inside $\D_\I$ if and only if it has the form $\CC=\F(G)$ for a reduced plabic graph
$G$ with strand permutation $\pi_\I$. In particular, a maximal weakly separated collection $\CC$ in ${[n] \choose k}$ has the form $\CC=\F(G)$ for a reduced plabic graph $G$ with strand permutation
\begin{equation}\label{eq:taunk}
\pi(i)=i+k\pmod n,\quad i=1,2,\ldots,n.
\end{equation}
\end{thm}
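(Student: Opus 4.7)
The plan is to prove the equivalence by combining a direct combinatorial analysis of reduced plabic graphs with the rank formula of Theorem~\ref{thm:weak_separation_necklace}. For the ``if'' direction, given a reduced plabic graph $G$ with strand permutation $\pi_\I$, I would first verify $\F(G)\subseteq\D_\I$: the boundary face between boundary vertices $i-1$ and $i$ inherits the label $I_i$ from the Grassmann necklace, since by the ``rules of the road'' a strand passes to the left of this face precisely when it exits in the cyclic arc corresponding to the elements of $I_i$. Then I would check that any two face labels $\F(F_1)$ and $\F(F_2)$ are weakly separated by tracking the strands that separate the two faces; crossing each such strand toggles a single element in the label, and the alternation of black/white turning rules forces the toggled elements to avoid the interleaving pattern $i<j<k<l$ that would obstruct weak separation.

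For maximality of $\F(G)$, I would invoke the standard face count for reduced plabic graphs: Euler's formula together with the reducedness axioms of Definition~\ref{dfn:reduced} yields exactly $k(n-k)-\Alignments(\pi_G)+1=\ell(\pii_\I)+1$ faces. By Theorem~\ref{thm:weak_separation_necklace}, this matches the cardinality of any maximal weakly separated collection in $\D_\I^\inn$, so $\F(G)$ is maximal by inclusion inside $\D_\I$ (and in fact inside $\D_\I^\inn$, which suffices).

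For the converse, I would show that every maximal weakly separated collection $\CC \subset \D_\I$ arises as $\F(G_\CC)$ for a reduced plabic graph $G_\CC$ obtained via a \emph{plabic tiling}: white vertices come from ``white cliques'' $\{S\cup a, S\cup b,\ldots\}\subset\CC$ sharing a common $(k-1)$-subset $S$, and black vertices from ``black cliques'' $\{T\setminus a, T\setminus b,\ldots\}\subset\CC$ sharing a common $(k+1)$-superset $T$. Edges join cliques that share two elements of $\CC$ differing by one index, and the dual of the resulting two-dimensional complex provides $G_\CC$. The principal obstacle is verifying that $G_\CC$ is reduced and that $\pi_{G_\CC}=\pi_\I$; the cleanest route uses the mutation-connectivity of maximal collections in $\D_\I^\inn$ (the second part of Theorem~\ref{thm:weak_separation_necklace}), because a square move on $\CC$ translates to an (M1) move on $G_\CC$. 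One then fixes a single explicit base case (e.g.\ a staircase collection) where both sides agree, and propagates the identification along the mutation graph.

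Finally, the special case $\pi(i)=i+k\pmod n$ in the second sentence of the theorem is the immediate translation, via Definition~\ref{dfn:dec_perm}, of the ``uniform'' Grassmann necklace $I_i=\{i,i+1,\ldots,i+k-1\}$ for $\binom{[n]}{k}$: each recursion step $I_{i+1}=I_i\setminus\{i\}\cup\{i+k\}$ reads off $\pi_\I(i)=i+k$.
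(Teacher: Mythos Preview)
The paper does not prove this theorem: it is quoted verbatim as a result of Oh--Postnikov--Speyer (reference~\cite{P}) and is used here only as background. There is therefore no ``paper's own proof'' to compare your proposal against.

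Your sketch is a reasonable high-level outline of the argument in~\cite{P}: face labels are weakly separated, the face count matches $\ell(\pii_\I)+1$, and the converse is obtained by building the plabic tiling dual from black and white cliques and propagating along mutations. A couple of points, however, are glossed over in a way that hides real work. First, you conflate $\D_\I$ and $\D_\I^\inn$: the theorem as stated concerns maximal weakly separated collections inside $\D_\I$, and one needs the nontrivial fact (Remark~\ref{rmk:positroid}) that any such collection actually lies in $\D_\I^\inn$ before Theorem~\ref{thm:weak_separation_necklace} applies. Second, your argument for weak separation of two face labels (``the alternation of black/white turning rules forces the toggled elements to avoid the interleaving pattern'') is not a proof; in~\cite{P} this step goes through the geometry of plabic tilings and is not a one-line observation. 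Third, the claim that the plabic tiling built from cliques is a genuine two-dimensional CW complex whose dual is a reduced plabic graph is the technical heart of~\cite{P}, and your proposal to bypass it via mutation-connectivity is circular: the mutation-connectivity in Theorem~\ref{thm:weak_separation_necklace} is itself proved in~\cite{P} \emph{via} the plabic-graph correspondence you are trying to establish.
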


\subsection{The canonical decorated permutation associated with $\A_{A,\AA}$}
Recall that for a set $A \in {[2k] \choose k}$, $\AA$ denotes the complement of $A$. We can always cyclically shift $A$ and $\AA$ in order to have $1 \in A, 2k \in \AA$. In such a case $A$ and $\AA$ are of the form $A=P_1\cup P_3 \cup \ldots \cup P_{2u-1}$, $\AA=P_2\cup P_4 \cup \ldots \cup P_{2u}$ for some $u \geq 1$, where $P_i$ are intervals for any $1 \leq i \leq 2u$ and $P_1<P_2<P_3<\dots<P_{2u-1}<P_{2u}$.


Our running example is going to be $A=\{1,2,3,7,8\} \in {[10] \choose 5}$. Equivalently, $A=[1,3] \cup [7,8]$ and $\AA=[4,6] \cup [9,10]$. Therefore, $\{[1,3],[4,6],[7,8],[9,10]\}$ are the intervals associated with $A$ (see Definition~\ref{dfn:intervals_associated_with_A}) and their lengths are $p_1=p_2=3, p_3=p_4=2$.

For each $k\leq n$ we let $\tau_{k,n}$ be the permutation defined by~\eqref{eq:taunk}. We view permutations as maps $[n]\to [n]$ so if $\sigma$ and $\pi$ are two permutations of $[n]$ then $(\sigma\circ\pi)(i)=\sigma(\pi(i))$. Finally, we write each permutation in one-line notation as follows:
\[\sigma=\left(\sigma(1),\sigma(2),\dots,\sigma(n)\right).\]

Recall that $A$ is called \textit{balanced} if $p_i+p_j<k$ for all $1 \leq i\neq j \leq 2u$ and that by $\A_{A,\AA}$ we denote the collection of all subsets in $\ch$ that are weakly separated from both $A$ and $\AA$. For example, the set $A=\{1,2,3,7,8\}$ is not balanced because $p_1+p_2=6$ and $6$ is not strictly less than $5$. 

By $\tau_A$ we denote the following permutation:
\[\tau_A=\left(p_1,p_1-1,\dots,1,p_1+p_2,p_1+p_2-1,\dots,p_1+1,\dots,2k,2k-1,\dots,2k-p_{2u}+1\right).\]

We say that the permutation $\tau_A\circ\tau_{k,2k}$ is the \emph{canonical decorated permutation associated with} $\A_{A,\AA}$. For $A=\{1,2,3,7,8\}$ from the example above, we have $k=5$ so the permutation $\tau_{5,10}$ is
\begin{eqnarray*}
\tau_{5,10}&=&[6,7,8,9,10,1,2,3,4,5]
\end{eqnarray*}

in one-line notation. Similarly,
\begin{eqnarray*}
\tau_A&=&[\underline{3,2,1},\underline{6,5,4},\underline{8,7},\underline{10,9}].
\end{eqnarray*}

After taking the composition, we get

\begin{eqnarray*}
\tau_A\circ\tau_{5,10}&=&[\underline{4},\underline{8,7},\underline{10,9},\underline{3,2,1},\underline{6,5}].
\end{eqnarray*}

We denote by $\I(\tau_A\circ\tau_{k,2k})$ the Grassmann necklace that corresponds to $\tau_A\circ\tau_{k,2k}$ and call it the \textit{canonical Grassmann necklace associated with} $A$. For $A=\{1,2,3,7,8\}$ as above, its canonical Grassmann necklace $\I(\tau_A\circ\tau_{k,2k})$ is the sequence of sets written in the rows of the table in Figure~\ref{fig:table}.

\begin{figure}
 \centering

\[\begin{array}{|ccc|ccc|cc|cc|}\hline
1&2&3& &5&6& & & &  \\
 &2&3&4&5&6& & & &  \\
 & &3&4&5&6& &8& &  \\
 & & &4&5&6&7&8& &  \\
 & & & &5&6&7&8& &10\\
 & & & & &6&7&8&9&10\\
 & &3& & & &7&8&9&10\\
 &2&3& & & & &8&9&10\\
1&2&3& & & & & &9&10\\
1&2&3& & &6& & & &10\\
1&2&3& &5&6& & & &  \\  \hline
\end{array}\]
 \caption{\label{fig:table} The rows of the table are the sets in the Grassmann necklace $\I(\tau_{\{1,2,3,5,6\}}\circ\tau_{5,10})$.}
\end{figure}


\begin{remark}\label{rmk:length}
Note that $\{i,j\}$ is an alignment (see Definition~\ref{dfn:decorated_perm}) in $\tau_A\circ\tau_{k,2k}$ iff both $\tau_{k,2k}(i)$ and $\tau_{k,2k}(j)$ belong to the same set $P_m$ for some $1 \leq m \leq 2u$. Therefore, \[\ell(\I(\tau_A\circ\tau_{k,2k}))=k^2-\sum_{i=1}^{2u} {p_i \choose 2}.\]
\end{remark}

\section{Proof of Theorem~\ref{thm:LR}}\label{section:proofLR}

Recall that $\LR{n}\subset 2^{[0,n]}$ is the collection of all subsets $S$ of $[0,n]$ such that $|S\cap \{0,n\}|=1$.

Let $\W\subset \LR{n}$ be any weakly separated collection. We call sets of the form
\[\Big\{L\subset [n-1]\mid  L\cup \{0\}\in \W \Big\}\]
\textit{left subsets of }$\W$, while sets of the form
\[\Big\{R\subset [n-1]\mid  R\cup \{n\}\in \W \Big\}\]
will be called \textit{right subsets of }$\W$. For $A, B \subset [ n-1 ]$, we say that $A$ is \textit{to the right} (resp., \textit{to the left}) of $B$ if $B \setminus A < A \setminus B$ (resp., $A \setminus B < B \setminus A $).

We start with the proof of the second part: we will construct inductively a sequence of sets $$\emptyset=S_0\subset S_1\subset\dots\subset S_{n-1}=[n-1]$$ so that $|S_m|=m$ and $S_m\cup \{0\}$ and $S_m\cup\{ n\}$ are weakly separated from $\W$ for any $0<m<n$. In such a case, if $\W$ is maximal then $S_m\cup \{0\}, S_m\cup\{ n\} \in \W$. The base case holds as $S_0$ clearly satisfies these requirements. Suppose we have a sequence $S_0\subset S_1\subset \dots\subset S_m$ satisfying the desired properties. By the inductive hypothesis, $S_m \cup \{ n\}$ is weakly separated from $L \cup \{0\}$ for any left subset $L$, which implies that $S_m$ is to the right of all left subsets. Therefore, there exists an element $j'\in [n-1]\setminus S_m$ such that $S_m\cup\{j'\}$ is to the right of all left subsets (we can just take $j'$ to be the maximal element of $[n-1]\setminus S_m$). Let $j$ be the minimal element such that $S_{m+1}:=S_m\cup\{j\}$ is still to the right of all left subsets in $\W$.

In order to prove the inductive step, we will show that the following holds for any $L$ and $R$, where $L$ is a left subset in $\W$ and $R$ is a right subset in $\W$:
\begin{enumerate}[label=(\alph*)]
    \item \label{a} $S_{m+1} \cup \{ n\}$ is weakly separated from $L \cup \{0\}$.
    \item \label{b} $S_{m+1} \cup \{0\}$ is weakly separated from $L \cup \{0\}$.
    \item \label{c} $S_{m+1} \cup \{0\}$ is weakly separated from $R \cup \{ n\}$.
    \item \label{d} $S_{m+1} \cup \{ n\}$ is weakly separated from $R \cup \{ n\}$.
\end{enumerate}
The parts \ref{a} and \ref{b} hold since $S_{m+1}$ is to the right of $L$.

In order to verify \ref{c} and \ref{d}, we need to show that
$S_{m+1}$ is to the left $R$. Assume to the contrary that $R$ is a right subset that is not to the right of $S_{m+1}$. By the induction hypothesis, $R$ was to the right of $S_m$ (since $S_m \cup \{0\}$ and $R \cup \{ n\}$ were weakly separated). It means that $j\notin R$ and that there is an element $i<j$ that belongs to $R\setminus S_m$. Let us choose the maximal such $i$ so that $(R\setminus S_{m})\cap (i,j]=\emptyset$.

We chose $j$ to be the minimal element so that $S_m\cup\{j\}$ is to the right of all left subsets, which means that we did not choose $i$ for some reason. Thus there is a left subset $L$ such that $i\notin L$ and $L\setminus S_m$ contains some element $k>i$. If $k>j$ then we would not choose $j$, so $k\leq j$. Recall that $(R\setminus S_{m})\cap (i,j]=\emptyset$ so $k\notin R$. We get a contradiction: $L$ should be to the left of $R$ (since $L \cup \{0\}$ and $R \cup \{ n\}$ are weakly separated) but $$i<k,i\in R,i\notin L,k\notin R,k\in L.$$ Therefore our assumption was wrong and $S_{m+1}$ is to the left of all right subsets. This implies that \ref{c} and \ref{d} hold and thus finishes the induction argument. In order to finish the proof of the second part of Theorem~\ref{thm:LR}, the only thing left to show is that if $\W$ is maximal then $S_m$ is unique for all $m$. Assume in contradiction that for some $0 \leq m \leq n$, $S_m$ is not unique. Therefore, there exist two different sets $A, B \subset [n-1]$ such that $|A|=|B|$, and
\[(A \cup \{0\}),(B \cup \{0\}),(A \cup \{ n\}),(B \cup \{ n\}) \in \W.\]
Since $A \cup \{0\}$ and $B \cup \{ n\}$ are weakly separated, $A$ must be to the left of $B$. However, the same argument for $B \cup \{0\}$ and $A \cup \{ n\}$ implies that $B$ is to the left of $A$. This implies that $A=B$, a contradiction. This finishes the proof of the second part of Theorem~\ref{thm:LR}.

Now to prove the first part, let $\W\subset \LR{n}$ be a maximal weakly separated collection, and let $S_0\subset S_1\subset\dots\subset S_{n-1}$ be the unique sequence of sets obtained in the second part. Consider the following simple cyclic pattern inside $[ n ]$:
\begin{eqnarray*}
 \P:=(S_0,(\{0\}\cup S_0),(\{0\}\cup S_1),&\dots&,(\{0\}\cup S_{n-1}),(\{0, n \}\cup S_{n-1}), \\
 (\{n\} \cup S_{n-1}), (\{n\} \cup S_{n-1}),&\dots&, (\{n\} \cup S_0)).
\end{eqnarray*}

First, let us determine which subsets of $[0,n]$ are contained in $\D_\P$. We consider four cases depending on whether a subset contains $0$ and/or $ n $:
\[(\{0\}\cup L), (R\cup \{ n\}), (\{0\}\cup T\cup \{ n\}), T,\]
where $L,R,T \subset [n-1]$.
\begin{enumerate}
 \item \label{type1} $(\{0\}\cup L) \in \D_\P$. In this case $L$ should be to the left of $S_m$ for all $m$ because $\{0\}\cup L$ should be weakly separated from $S_m\cup \{ n\}$. It is easy to check that this criterion is also sufficient in order to have $(\{0\}\cup L) \in \D_\P$.
 \item \label{type2} $(R\cup \{ n\}) \in \D_\P$. In this case $R$ should be to the right of $S_m$ for all $m$ because $R\cup \{ n\}$ should be weakly separated from $S_m\cup \{0\}$. This criterion is also sufficient.
 \item \label{type3} $(\{0\}\cup T\cup \{ n\}) \in \D_\P$. Let $k=|T|$ and consider the set $S_k$. If $T\neq S_k$ then there are elements $t,s\in [n-1]$ such that $t\in T\setminus S_k$ and $s\in S_k\setminus T$. If $t<s$ then we have
  \[|\{0\}\cup T\cup \{ n\}|=k+2>k+1=|\{0\}\cup S_k|\]
 and
 \[t, n \in \left(\{0\}\cup T\cup \{ n\}\right)\setminus \left(\{0\}\cup S_k\right),\]
  while
 \[s\in \left(\{0\}\cup S_k\right) \setminus \left(\{0\}\cup T\cup \{ n\}\right).\]
 In other words, $\{0\}\cup S_k$ has smaller size than $\{0\}\cup T\cup \{ n\}$ and does not surround $\{0\}\cup T\cup \{ n\}$, so they are not weakly separated. But in order to have $\{0\}\cup T\cup \{ n\} \in \D_\P$, it must be weakly separated from $\{0\}\cup S_k\in \P$, a contradiction.

 If $s<t$ then we apply a similar argument, only now we consider $S_k \cup \{ n\}$ instead of $\{0\}\cup S_k$. Thus if $(\{0\}\cup T\cup \{ n\}) \in \D_\P$ then $T=S_k$. It is clear that $(\{0\}\cup S_k\cup \{ n\}) \in \D_\P$.
 \item \label{type4} $T \in \D_\P$. Similarly to the case (\ref{type3}) above, this happens if and only if $T=S_k$.
\end{enumerate}

We now determine the domains $\D_\P^\inn$ and $\D_\P^\outt$. For $1 \leq h \leq n$ the slopes of $\P$ of size $h$ are
\[\A_h:=\Big\{\{0\}\cup S_{h-1},S_{h-1}\cup \{ n\}\Big\}.\]
Recall that
\begin{eqnarray*}
\X_h&:=&\{X \in \D_{\P} \mid |X|=h \textrm{ and } S \prec X \textrm{ for an odd number of } S \in \A_h\},\\
\Y_h&:=&\{X \in \D_{\P} \mid |X|=h \textrm{ and } S \prec X \textrm{ for an even number of } S \in \A_h\}.
\end{eqnarray*}
We would like to show that for each $h$, the only two elements of $\D_\P$ that belong to $\X_h$ are $S_{h}$ (if $h<n$) and $\{0\} \cup S_{h-2}\cup \{ n\}$ (if $h>1$), that is, the subsets of types (\ref{type3}) and (\ref{type4}). Take any subset $X\in \D_\P$ with $|X|=h$. The only two slopes of size $h$ are $\{0\}\cup S_{h-1}$ and $S_{h-1}\cup \{ n\}$. If $X$ is of type (\ref{type1}) (resp., (\ref{type2})) meaning that $X=\{0\}\cup L$ (resp., $X=R\cup \{n\}$) then $X$ is to the left (resp., to the right) of both of the slopes in $\A_h$. Therefore in these two cases $X$ belongs to $\Y_h$. If $X$ is of types (\ref{type3}) or (\ref{type4}) then the slope $\{0\}\cup S_{h-1}$ is to the left of $X$ while the slope $S_{h-1}\cup \{ n\}$ is to the right of $X$, so $X\in \X_h$. To sum up,

\[\X_h:=\Big\{S_{h}\textrm{, if $h<n$}\Big\}\cup\Big\{\{0\} \cup S_{h-2}\cup \{ n\}\textrm{, if $h>1$}\Big\};\]
\begin{eqnarray*}
\Y_h&:=& \Big\{\{0\}\cup L \mid |L|=h-1, L \textrm{ is of type (\ref{type1})}\Big\}\cup\\
    &  & \Big\{R \cup \{n\}\mid |R|=h-1, R \textrm{ is of type (\ref{type2})}\Big\}.
\end{eqnarray*}

Therefore $\D_\P^\inn$ consists of $\P$ and all the elements of type (\ref{type3}) and (\ref{type4}), while $\D_\P^\outt$ consists of $\P$ and all the elements of the form (\ref{type1}) and (\ref{type2}). By Theorem~\ref{thm:purity_of_simplecyclic}, both of these domains are pure. Note that $\W$ is a maximal weakly separated collection inside $\LR{n}$ so $\W \cup \{\emptyset,[0,n ]\}$ is a maximal weakly separated collection inside $\D_\P^\outt$. Hence in order to establish the purity of $\LR{n}$ and to show that its rank is ${n\choose 2}+n+1$, we need to prove that
\[\rk\D_\P^\outt={n\choose 2}+n+3.\]

Let us first analyze maximal weakly separated collections in $\D_\P^\inn$. Note that a pair of the form $S_m$ and $\{0\} \cup S_{m-1}\cup \{ n\}$, is not weakly separated. Therefore the collection $\{S_1,S_2\dots,S_{n-1}\} \cup \P$ is a maximal weakly separated collection inside $\D_\P^\inn$, so $\rk\D_\P^\inn= n-1+|\P|$.
Finally, by Theorems~\ref{thm:purity_of_n} and~\ref{thm:purity_of_simplecyclic}, we have the following equation:

$$\rk\D_\P^\inn+\rk\D_\P^\outt-|\P|=\rk 2^{[0, n ]}={ n+1 \choose 2}+(n+1)+1.$$
Let us substitute $\rk\D_\P^\inn=n-1+|\P|$ and make the cancellations:
$$\rk\D_\P^\outt={ n+1 \choose 2}+3={n\choose 2}+n+3.$$
Unlike $\P$, this number does not depend on the sequence $S_0,\dots,S_{n-1}$, so $\LR{n}$ is pure and its rank is ${n\choose 2}+n+1$. This finishes the proof of Theorem~\ref{thm:LR}.\qed

\section{Isomorphic generalized necklaces}\label{sect:iso}

\def\T{{\mathcal{T}}}
\def\CC{{\mathcal{C}}}

\def\id{{\operatorname{id}}}

Recall from Section \ref{subsect:dkk} that given a Grassmann necklace $\I=(I_1,I_2,\dots,I_n)$ we denote by $\Ired$ a generalized cyclic pattern obtained from $\I$ by removing all adjacent repetitions: we remove $I_k$ from $\I$ iff $I_k=I_{k-1}$. We call a generalized cyclic pattern of the form $\Ired$ a \textit{reduced Grassmann necklace}. These reduced Grassmann neckalces form a nice class of generalized cyclic patterns for which we know the ranks of $\D^\inn_\Ired$ and $\D^\outt_\Ired$. In this section we want to generalize this class to those generalized cyclic patterns which differ from a reduced Grassmann necklace by a simple relabeling of elements of $[n]$.

\begin{dfn}\label{dfn:gr_like_necklace}
Let $\S=\left(S_0,S_1,\dots,S_r=S_0\right)$ be a generalized cyclic pattern satisfying the following properties:
\begin{enumerate}

 \item\label{dfn:i_t_and_j_t}  for all $0\leq t<r$, there exist numbers $i_t\neq j_t$ such that $S_t\setminus S_{t+1}=\{i_t\}$ and $S_{t+1}\setminus S_{t}=\{j_t\}$;
 \item \label{dfn:pairwise_distinct} the numbers $i_1,\dots,i_r$ are pairwise distinct and the numbers $j_1,\dots,j_r$ are pairwise distinct;
 \item \label{dfn:same_sets} $\{i_1,\dots,i_r\}=\{j_1,\dots,j_r\}$.
\end{enumerate}
In this case $\S$ is called \textit{a Grassmann-like necklace}.
\end{dfn}

Note that the property (\ref{dfn:same_sets}) follows from (\ref{dfn:i_t_and_j_t}) and (\ref{dfn:pairwise_distinct}). Property (\ref{dfn:same_sets}) allows us to denote $\act(\S):=\{i_1,\dots,i_r\}=\{j_1,\dots,j_r\}$.

\begin{dfn}\label{dfn:iso}
 Two generalized cyclic patterns $\S=\left(S_0,S_1,\dots,S_r=S_0\right)$ and $\CC=\left(C_0,C_1,\dots,C_r=C_0\right)$ with $S_i,C_i\subset [n]$ are called \textit{isomorphic} if there exists a permutation $\gamma\in \mathfrak{S}_n$ such that:
 \begin{itemize}
  \item $\gamma(C_i)=S_i$ for $i=0\dots r$;
  \item $\gamma$ is a bijection $\D_\CC^\inn\to\D_\S^\inn$;
  \item two subsets $A,B\in\D_\CC^\inn$ are weakly separated if and only if $\gamma(A)$ and $\gamma(B)$ are weakly separated.
 \end{itemize}
 Such a permutation $\gamma$ is called \textit{an isomoprhism} between $\S$ and $\CC$.
\end{dfn}

Clearly, if $\S$ and $\CC$ are isomorphic then the ranks of $\D_\S^\inn$ and $\D_\CC^\inn$ are equal.

Given a Grassmann-like necklace $\S$, we can associate with it two (decorated) permutations $\sigma^{:}=(\sigma,\col_\S)$ and $\pii=(\pi,\col_\S)$ in $\Sfrak_n$ as follows. If we order the elements of $\act(\S)$ in increasing order
\[\act(\S)=\{q_1<q_2<\ldots<q_r\}\]
then for all $t\in [r]$ we put
\[\sigma(q_t)=i_t, \pi(q_t)=j_t.\]
For $i\notin \act(\S)$, we leave $\sigma(i)=\pi(i)=i$ with $\col_\S(i)=+1$ (resp., $\col_\S(i)=-1$) if $i\in S_k$ (resp., $i\notin S_k$) for all $k\in[r]$.

Note that if $\sigma=\id$ and $\act(\S)=[n]$ then $\S$ is just a connected Grassmann necklace. For a (decorated) permutation $(\gamma,\col_\S)$, denote the corresponding reduced Grassmann necklace by $\CC^\circ(\gamma,\col_\S)$ (see the bijection after Definition~\ref{dfn:dec_perm}).

\begin{thm}\label{thm:iso}
 Let $\S$ be a Grassmann-like necklace. Then $\sigma^{-1}$ is an isomorphism between $\S$ and the reduced Grassmann necklace $\CC^\circ(\sp,\col_\S)$.
\end{thm}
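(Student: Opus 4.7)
I will prove Theorem~\ref{thm:iso} in three stages, corresponding to the three conditions in Definition~\ref{dfn:iso}.

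First (combinatorial identification): Setting $C_t := \sigma^{-1}(S_t)$, I verify that $(C_0, C_1, \ldots, C_r)$ is exactly the reduced Grassmann necklace $\CC^\circ(\sigma^{-1}\pi, \col_\S)$. Direct calculation gives $C_t \setminus C_{t+1} = \sigma^{-1}(S_t\setminus S_{t+1}) = \{\sigma^{-1}(i_t)\} = \{q_t\}$ and $C_{t+1} \setminus C_t = \{\sigma^{-1}(j_t)\} = \{(\sigma^{-1}\pi)(q_t)\}$. Since $q_1 < q_2 < \cdots < q_r$ are precisely the non-fixed points of $\sigma^{-1}\pi$ (because $i_t \neq j_t$ forces $\sigma(q_t)\neq\pi(q_t)$, while off $\act(\S)$ both $\sigma$ and $\pi$ are the identity), the transitions occur in the increasing order required by the bijection of Definition~\ref{dfn:decorated_perm}. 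The colorings on non-active fixed points agree because $\sigma^{-1}$ is the identity off $\act(\S)$, so every non-active $i$ lies in every $S_t$ iff it lies in every $C_t$.

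Second (bijection of the inside-domains): I need to check that the set-wise relabeling $X\mapsto \sigma^{-1}(X)$ restricts to a bijection between $\D_\S^\inn$ and $\D_\CC^\inn$. My preferred route is via the geometric description in Proposition~\ref{prop:geometricdef}: the closed polygonal curve $\zeta_\S$, built from the vertices $\sum_{i\in S_t}\xi_i$, is carried to $\zeta_\CC$, built from $\sum_{i\in C_t}\xi_i = \sum_{j\in S_t}\xi_{\sigma^{-1}(j)}$, and the combinatorial inside/outside decomposition transports through the relabeling. As an alternative, I could invoke Remark~\ref{rmk:positroid} to write $\D_\CC^\inn$ as the positroid $\{J : C_i \prec_i J \text{ for all } i\}\cap\D_\CC$ and then translate each inequality $C_i \prec_i J$ into an equivalent positional condition characterizing $\D_\S^\inn$.

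Third (weak-separation preservation), and this is the main obstacle: for $A,B\in\D_\CC^\inn$, the sets $A$ and $B$ are weakly separated if and only if $\sigma^{-1}(A)$ and $\sigma^{-1}(B)$ are. Since $\sigma^{-1}$ does not preserve weak separation on $2^{[n]}$ in general (a transposition of cyclically nonadjacent elements already breaks weak separation of some pair), this assertion must be extracted from the positroid structure of $\D_\CC^\inn$. My plan is to argue via plabic graphs: by Theorem~\ref{thm:weak_separation_necklace}, every maximal weakly separated collection inside $\D_\CC^\inn$ arises as the face labels $\F(G)$ of a reduced plabic graph $G$ with strand permutation $\sigma^{-1}\pi$, and I would transform $G$ into a reduced plabic graph $G'$ whose face labels are exactly $\sigma^{-1}(\F(G))\subset\D_\S^\inn$, with weakly-separated pairs matching pair-by-pair. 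The hardest technical point will be checking square-move compatibility: whenever a quadruple $(a,b,c,d)$ is cyclically ordered in $[n]$ and the associated five sets $Sab,Sbc,Scd,Sda,Sac$ of Proposition~\ref{prop:mutations} all lie in $\D_\CC^\inn$, the images $\sigma^{-1}(a),\sigma^{-1}(b),\sigma^{-1}(c),\sigma^{-1}(d)$ must also be cyclically ordered. It is precisely the Grassmann-necklace constraints defining $\D_\CC^\inn$ that should force this cyclic-order preservation on mutation-quadruples, and establishing this is where the core work of the proof lies.
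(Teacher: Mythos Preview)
Your first stage is correct and matches the paper's Claim~2 inside the proof. The remaining two stages, however, are left as plans rather than arguments, and in both cases the route you sketch runs into real obstacles that the paper sidesteps entirely.

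For stage two, your ``preferred route'' via Proposition~\ref{prop:geometricdef} does not work as stated: the relabeling $X\mapsto\sigma^{-1}(X)$ is a bijection on $2^{[n]}$, but it does \emph{not} come from a map $Z_n\to Z_n$ carrying $\zeta_\S$ to $\zeta_\CC$. The point $\sum_{i\in X}\xi_i$ is sent to $\sum_{i\in X}\xi_{\sigma^{-1}(i)}$ only if you simultaneously permute the $\xi_i$, and then the ambient zonotope changes; there is no reason the inside/outside decomposition should survive. Your alternative via Remark~\ref{rmk:positroid} is more promising but you do not carry it out, and translating the inequalities $C_i\prec_i J$ through $\sigma$ is exactly as delicate as stage three.

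For stage three, you correctly identify the difficulty and then stop. Showing that $\sigma^{-1}$ preserves cyclic order of every mutation-quadruple arising inside the positroid is not obviously easier than the theorem itself, and you give no mechanism for why the positroid constraints force it.

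The paper avoids both problems with one geometric move. It extends $\S$ to a maximal weakly separated collection in ${[n]\choose k}$, takes the associated reduced plabic graph $G$, and observes that the faces labeled by $\S$ form a simple closed curve in $G$. Cutting along this curve yields an interior plabic graph $G^\inn$ with $r$ boundary edges (plus leaves for the non-active elements). The paper then checks that $G^\inn$ is reduced with strand permutation $\sigma^{-1}\pi$, and that replacing $G^\inn$ by any other reduced plabic graph with this permutation still gives a reduced $G$. Thus reduced plabic graphs for $\sigma^{-1}\pi$ are \emph{literally the same objects} as the plabic graphs appearing inside the curve; only the strand labels differ, by $\sigma^{-1}$. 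All three conditions of Definition~\ref{dfn:iso} then drop out at once: the bijection $\D_\S^\inn\leftrightarrow\D_{\CC^\circ}^\inn$ and the weak-separation equivalence both hold because both sides are computed as face labels of the \emph{same} family of plabic graphs. This is the key idea you are missing.
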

\begin{proof}

\def\SS{{\widetilde{\S}}}

\def\Q{{\mathcal{Q}}}
\def\W{{\mathcal{W}}}
\def\sinv{{\sigma^{-1}}}

 Let $\SS$ be a maximal weakly separated collection of subsets from $[r]\choose k$ so that $\S\subset\SS$. Consider the reduced plabic graph $G$ corresponding to $\SS$. It has some faces labeled by the sets from $\S$, and these faces form a \textit{simple closed curve} (see \cite{DKK14}); in particular, the face labeled by $S_t$ and the face labeled by $S_{t+1}$ share either a vertex or an edge for each $t\in [r]$. By uncontracting vertices into edges we get $r$ edges $e_0,\dots,e_{r-1}$ such that $e_t$ lies between faces labeled by $S_t$ and $S_{t+1}$. Let $O$ denote the (topological) circle  that passes through the midpoints of $e_0,\dots,e_{r-1}$ and does not cross other edges of $G$. We have that the strand labeled by $i_t$ enters the circle $O$ through the edge $e_t$ while the strand labeled by $j_t$ exits the circle $O$ through this edge. The faces outside $O$ belong to $\D_\S^\outt$ so they are weakly separated from all sets in $\mathcal{D}^\inn_{\S}$ (see \cite{DKK14}).

  Now we want to do the following: we consider the part $G^\outt$ of $G$ outside $O$ and the part $G^\inn$ inside $O$. We would like to think of $G^\inn$ as a separate plabic graph (note that it has exactly $r$ vertices on the boundary -- the midpoints of $e_0,\dots,e_{r-1}$). We add $n-r$ boundary leaves to the boundary of $G^\inn$ so that the midpoint of each $e_i$ would be labeled by $q_i$ (recall that $\act(\S)=\{q_1<q_2<\ldots<q_r\}$). We make each of these boundary leaves black or white according to the color function $\col_\S$ that we constructed earlier.

  Let us make the following claims:
  \begin{enumerate}
   \item If $G$ is reduced then $G^\inn$ is reduced as well;
   \item The decorated permutation of $G^\inn$ is $(\sp,\col_\S)$;
   \item Let $G_1=G^\outt\cup G^\inn_1$ be the same as $G$ with $G^\inn$ replaced by another reduced plabic graph $G^\inn_1$ with decorated permutation $\sp$. Then $G_1$ is reduced and has the same decorated permutation as $G$.
  \end{enumerate}

  The first claim is clear from the definition of a reduced plabic graph (Definition~\ref{dfn:reduced}). The second claim is also easy to show: if $i_t=\sigma(q_t)=\pi(q_s)=j_s$ and if $(\gamma,\col_\S)$ is the decorated permutation of $G^\inn$ then $\gamma(q_s)=q_t$. But $q_t=\sp(s)$ which proves the second claim. Finally, the third claim follows from the results of \cite{P} that any two reduced plabic graphs with the same decorated permutation are connected by a sequence of square moves: $G^\inn$ and $G^\inn_1$ are connected by a sequence of square moves and each of them changes neither the reducedness of $G$ nor its decorated permutation.

  The three claims above show that reduced plabic graphs with decorated permutation equal to $\sp$ are literally the same as reduced plabic graphs that can occur inside $\S$. It remains to note that the strand labeled by $i$ in $G$ is labeled by $\sinv(i)$ in $G^\inn$ when we ignore $G^\outt$. Thus $\sinv$ is an isomorphism between $\S$ and $\CC(\sp)$.

 \end{proof}

\newcounter{eqn}
\renewcommand*{\theeqn}{\alph{eqn})}
\newcommand{\num}{\refstepcounter{eqn}\text{\theeqn}\quad}
\def\rt{{\widetilde{r}}}
\def\at{{\widetilde{ \alpha}}}
\def\bt{{\widetilde{ \beta}}}
\def\gt{{\widetilde{ \gamma}}}
\def\dt{{\widetilde{ \delta}}}

\section{Description of the elements in $\A_{I,J}$}\label{sec:description}
We now proceed to the last steps needed to prove Theorems \ref{thm:main} and \ref{thm:main_nc}. We are going to prove only the stronger Theorem~\ref{thm:main_nc} since we get almost no extra complications.

\def\proj{ \operatorname{proj}}
Throughout Sections~\ref{sec:description} and~\ref{sec:Chord_separation}, for fixed $I$ and $J$, if $R\in {[n]\choose m}$ then we denote $\proj(R)\in{[2k]\choose k}$ its image after ignoring all the elements from $\overline{I\bigtriangleup J}$. If there is no confusion, we also denote $\proj(R)$ by $\Rt$. For a number $r \in [n]$, if $r \in I\bigtriangleup J$ then we denote by $\rt$ the unique element in the set $\proj(r)$. For three sets $A,B,C\subset [n]$, we say that \emph{$A\subset B$ on $C$} if $A\cap C\subset B\cap C$.

Recall that $\A_{I,J}$ is the collection of subsets in ${[n]\choose m}$ that are weakly separated from both $I$ and $J$. Also recall that $\It$ and $\Jt$ partition the circle $[2k]$ into intervals $(\Pt_1,\Pt_2,\dots,\Pt_{2u})$ of lengths $(p_1,p_2,\dots,p_{2u})$ where $\Pt_{2i+1}\subset \It$ and $\Pt_{2i}\subset \Jt$. Finally, recall that $I,J$ form a \emph{balanced pair} iff $p_i+p_j<k$ for all $i\neq j\in [2u]$. The following lemma describes the elements of $\A_{I,J}$ for a balanced pair $I,J$.


\begin{lemma}\label{lemma:characterizationI}
Let $I,J \in {[n]\choose m}$ form a balanced pair. Let $R\in \A_{I,J}$ be weakly separated from both $I$ and $J$. Then there exists a four-tuple of cyclically ordered elements $\alpha<\beta\leq \gamma <\delta\leq \alpha$ of $[n]$ such that
\begin{alignat*}{6}
  \num& I\subset R \subset J &\text{ or }& J\subset R\subset I  &  \text{ on }(\alpha,\beta); \\
\num&                        &           & R \subset (I\cap J)  &  \text{ on }[ \beta, \gamma]; \\
  \num& I\subset R \subset J &\text{ or }& J\subset R\subset I  &  \text{ on }(\gamma, \delta); \\
\num&                        &           & (I\cup J) \subset R  &  \text{ on }[ \delta, \alpha].
\end{alignat*}
Moreover, each of $\proj[ \alpha, \beta)$ and $\proj( \gamma, \delta]$ is contained in a single interval associated with $\It$ and $\Jt$: $\proj( \gamma, \delta]\subset \Pt_l$ and $\proj[ \alpha, \beta)\subset\Pt_r$ for some $r,l\in [2u]$.
\end{lemma}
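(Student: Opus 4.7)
The plan is to translate the two weak-separation hypotheses into two chord-like separations of $[n]$, analyse the resulting arc decomposition, and then read off $(\alpha,\beta,\gamma,\delta)$ at its boundaries. Since $|R|=|I|$, the weak separation of $R$ with $I$ yields a chord $L_1$ on the cyclic circle $[n]$ separating $I\setminus R$ from $R\setminus I$; similarly $R$ and $J$ give a chord $L_2$ separating $J\setminus R$ from $R\setminus J$. I would then classify each $x\in[n]$ by one of eight types according to its membership in $I\cap J$, $I\setminus J$, $J\setminus I$, $\overline{I\cup J}$ crossed with $\{R,\bar R\}$---call these types $a,b,c,d,e,f,g,h$. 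The chord constraints fix, for each of the (up to) four arcs $X_{\varepsilon_1\varepsilon_2}$ cut out by $L_1,L_2$ (with $\varepsilon_i\in\{+,-\}$ recording which side of each chord contains the arc), the list of allowed types:
\[X_{++}\subseteq\{a,c,e,g,h\},\quad X_{--}\subseteq\{a,b,d,f,h\},\]
\[X_{+-}\subseteq\{a,e,f,h\},\quad X_{-+}\subseteq\{a,c,d,h\}.\]
These are precisely the type lists compatible with condition (d), condition (b), condition (a)/(c) under the option $I\subseteq R\subseteq J$, and condition (a)/(c) under $J\subseteq R\subseteq I$, respectively.

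Next, I would take $[\delta,\alpha]$ to be the closed sub-arc of $X_{++}$ (with $\delta$ the cyclic start and $\alpha$ the cyclic end), $[\beta,\gamma]$ to be the closed sub-arc of $X_{--}$, and let the two transition arcs $(\alpha,\beta)$ and $(\gamma,\delta)$ coincide with the diagonal arcs $X_{+-}$ and $X_{-+}$ in the appropriate cyclic order. Conditions (a)--(d) then hold by the type analysis above, and the cyclic inequality $\alpha<\beta\leq\gamma<\delta\leq\alpha$ is inherited from the cyclic ordering of the four arcs; edge cases (singleton or empty $X$-arcs) are accommodated via the collapses $\delta=\alpha$ or $\beta=\gamma$.

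The main obstacle I expect is the projection claim. The transition arc $(\alpha,\beta)$, restricted to $I\bigtriangleup J$, contains only elements of $J\setminus I$ (types $e,f$) or only of $I\setminus J$ (types $c,d$). Since $\proj$ preserves cyclic order, the image $\proj(\alpha,\beta)$ is a contiguous arc of $[2k]$ lying entirely in $\Jt$ or $\It$ respectively; a contiguous subset of $\Jt$ (resp.\ $\It$) must lie inside a single maximal interval $\Pt_r$, since it would otherwise have to traverse a $\Pt$ of opposite parity. The same argument applies to $\proj(\gamma,\delta)$. Extending to the half-closed arcs $[\alpha,\beta)$ and $(\gamma,\delta]$ required by the statement is then done by choosing the endpoints $\alpha,\delta$ so that their projections either match the transition's $\Pt_r$ (when $\alpha,\delta$ have the appropriate type $e$ or $c$) or are empty (types $a,g,h$). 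The balanced condition $p_i+p_j<k$ is what guarantees such a choice is always available: without balancedness the transition arcs could genuinely be forced to straddle two adjacent $\Pt$'s on $[2k]$, compare Remark~\ref{unbalancedremark}.
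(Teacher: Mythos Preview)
Your two-chord setup and the type analysis of the arcs $X_{\varepsilon_1\varepsilon_2}$ are exactly the paper's approach, and the identification of $X_{++},X_{--},X_{+-},X_{-+}$ with conditions (d),(b),(a),(c) is correct.  The gap is in the case analysis that follows.  You tacitly assume that the four arc types occur once each (possibly degenerately) in the cyclic order $X_{++},X_{+-},X_{--},X_{-+}$.  But when $L_1$ and $L_2$ are non-crossing with \emph{opposite} orientations, the four chord-endpoints cut the circle into arcs whose cyclic sequence is $X_{-+},X_{++},X_{+-},X_{++}$ (or the version with $X_{--}$ twice and $X_{++}$ absent): one diagonal type is repeated non-consecutively and the other is missing.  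In that configuration no choice of $\alpha,\beta,\gamma,\delta$ makes (a)--(d) hold, because any candidate transition arc $(\alpha,\beta)$ is forced to contain a stretch of the repeated $X_{++}$ (resp.\ $X_{--}$) on which $R\not\subset J$ and $R\not\subset I$ (resp.\ $I\not\subset R$ and $J\not\subset R$).

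This is also where you have misplaced the balanced hypothesis.  Balancedness is not needed for the projection claim: once a transition arc genuinely satisfies $I\subset R\subset J$ (say), it carries no elements of $I\setminus J$, so its projection sits inside $\Jt$ and hence in a single $\Pt_r$, with no size estimate required.  What balancedness \emph{is} used for is to rule out the anti-parallel configurations above.  In the case with two $X_{++}$ arcs one has $I\cap J\subset R$ on all of $[n]$, whence $|\Rt|\le k$; but the two transition arcs project into single intervals $\Pt_i,\Pt_j$, so on the complementary $X_{++}$ arcs $\Rt$ already covers at least $2k-p_i-p_j>k$ points of $[2k]$, a contradiction.  The dual argument (with all inclusions reversed) disposes of the case with two $X_{--}$ arcs.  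You need to insert this step; the rest of your outline then goes through.
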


\begin{remark}
If $I$ and $J$ do not form a balanced pair, $\A_{I,J}$ contains more complicated elements. This is the main motivation for introducing this notion.
\end{remark}

We call $\Pt_l$ and $\Pt_r$ the (resp., left and right) \textit{endpoints} of $R$. The collection $\Pt_{l+1},\dots,\Pt_{r-1}$ of all other intervals that are contained in $R$ is called the \textit{internal part} of $R$: we have $(\Pt_{l+1}\cup\dots\cup\Pt_{r-1})\subset\proj R$.

\begin{proof}
Since $I,J,R$ are all of the same size, we can use Remark~\ref{rmk:ws_definition} as a definition of weak separation. Let $c_I$ (resp., $c_J$) be the (combinatorial) chords separating $I\setminus R$ from $R\setminus I$ (resp., $J\setminus R$ from $R\setminus J$). We view $c_I$ and $c_J$ as directed arrows so that $I\setminus R$ is to the left of $c_I$ and $R\setminus I$ is to the right of $c_I$. There are five possible ways (see Figure~\ref{fig:arrows}) to position $c_I$ and $c_J$ relative to each other.

\begin{figure}
 \begin{tikzpicture}

\newcommand*{\rad}{2}%
\newcommand*{\stepH}{5}%
\newcommand*{\stepU}{4}%
\newcommand*{\angl}{30}%
\newcommand{\arrow}[5]{ 
    \draw[#3,->,thick] ({#1*\stepH -#4*\rad *cos(\angl)},{#2*\stepU -#5*\rad *sin(\angl)}) -- ({#1*\stepH +#4*\rad *cos(\angl)},{#2*\stepU +#5*\rad *sin(\angl)});
  }%
  \newcommand{\arrowH}[5]{ 
    \draw[#3,->,thick] ({#1*\stepH -#4*\rad *cos(\angl)},{#2*\stepU +#5*\rad *sin(\angl)}) -- ({#1*\stepH +#4*\rad *cos(\angl)},{#2*\stepU +#5*\rad *sin(\angl)});
  }%

\newcommand*{\vertM}{0.7}%
\newcommand*{\horM}{0.6}%
\newcommand*{\ssc}{0.8}%

\newcommand{\textR}[7]{%
  \draw ({#1*\stepH +#3*\rad *\horM +#6},{#2*\stepU +#4*\rad *\vertM +#7}) node {#5}
  }%

  \draw ({0*\stepH},\stepU) circle (\rad);
  \draw ({1*\stepH},\stepU) circle (\rad);
  \draw ({2*\stepH},\stepU) circle (\rad);
  \draw ({0.5*\stepH},0) circle (\rad);
  \draw ({1.5*\stepH},0) circle (\rad);

  \arrow{0}{1}{red}{+1}{+1};
  \arrow{0}{1}{blue}{+1}{-1};
  \textR{0}{1}{0}{1}{$R\subset I,J$}{0}{0};
  \textR{0}{1}{0}{-1}{$I,J\subset R$}{0}{0};
  \textR{0}{1}{1}{0}{\scalebox{\ssc}{$I\subset R\subset J$}}{0}{0};
  \textR{0}{1}{-1}{0}{\scalebox{\ssc}{$J\subset R\subset I$}}{0}{0};

  \arrowH{1}{1}{red}{+1}{+1};
  \arrowH{1}{1}{blue}{+1}{-1};
  \textR{1}{1}{0}{1}{$R\subset I,J$}{0}{0};
  \textR{1}{1}{0}{-1}{$I,J\subset R$}{0}{0};
  \textR{1}{1}{0}{0}{$I\subset R\subset J$}{0}{0};

  \arrowH{2}{1}{blue}{+1}{+1};
  \arrowH{2}{1}{red}{+1}{-1};
  \textR{2}{1}{0}{1}{$R\subset I,J$}{0}{0};
  \textR{2}{1}{0}{-1}{$I,J\subset R$}{0}{0};
  \textR{2}{1}{0}{0}{$J\subset R\subset I$}{0}{0};

  \arrowH{0.5}{0}{red}{+1}{+1};
  \arrowH{0.5}{0}{blue}{-1}{-1};
  \textR{0.5}{0}{0}{1}{$J\subset R\subset I$}{0}{0};
  \textR{0.5}{0}{0}{-1}{$I\subset R\subset J$}{0}{0};
  \textR{0.5}{0}{0}{0}{$I,J\subset R$}{0}{0};

  \arrowH{1.5}{0}{blue}{-1}{+1};
  \arrowH{1.5}{0}{red}{+1}{-1};
  \textR{1.5}{0}{0}{1}{$J\subset R\subset I$}{0}{0};
  \textR{1.5}{0}{0}{-1}{$I\subset R\subset J$}{0}{0};
  \textR{1.5}{0}{0}{0}{$R\subset I,J$}{0}{0};

 \end{tikzpicture}
 \caption{\label{fig:arrows} Five ways the relative positions of $c_I$ (red) and $c_J$ (blue) can look like. We have $R\subset I$ to the left of $c_I$ and $I\subset R$ to the right of $c_I$ and similarly for $c_J$.}
\end{figure}

Note that for the three circles on the top of Figure~\ref{fig:arrows}, we have nothing to prove: properties (1)-(4) clearly hold with numbers $\alpha, \beta, \gamma, \delta$ chosen to lie at the endpoints of the arrows; and since we have either $I\subset J$ or $J\subset I$ on both $( \alpha, \beta)$ and $( \gamma, \delta)$, each of these intervals has to map under $\proj$ to a subset of an interval $\Pt_i$ for some $i$.

The only thing left to show is that the two cases on the bottom of Figure~\ref{fig:arrows} lead to a contradiction. Consider the bottom-left circle. In this case $(I\cap J)\subset R$, therefore $|\Rt|\leq k$. Moreover, we have $\It,\Jt\subset \Rt$ between the arrows. But note that since $\It$ and $\Jt$ are complementary, the part of the circle above the red arrow has to lie inside a single interval $\Pt_i$ for some $i$, because for this part we have $J\subset R\subset I$ and thus $\Jt= \emptyset$ above the red arrow. Similarly, $\It= \emptyset$ below the blue arrow, thus the part of the circle below the blue arrow is contained inside a single interval $\Pt_j$ for some $j$. Since $I,J$ form a balanced pair, $p_i+p_j<k$ and since on the rest of the intervals $\Rt=\It\cup\Jt$, it follows that $|\Rt|>k$, a contradiction. The bottom-right case is treated in an analogous way with all the inequalities reversed, e.g. we first note that $R\subset (I\cup J)$ and in the end get a contradiction with the fact that $|\Rt|<k$ since it is only contained in at most two intervals.
\end{proof}

\section{Chord separation}\label{sec:Chord_separation}

In this section, we prove a few technical results on plabic graphs that we will use later to prove Theorem~\ref{thm:main_nc}. Their proofs are written in the language of \emph{plabic tilings} of~\cite{P}, which are the objects dual to reduced plabic graphs. We refer the reader to either of~\cite{P,G} for the background.

\def\taunk{{\tau_{k,n}}}

\begin{lemma}\label{lemma:strands1n_essential}
 Let $G$ be a reduced plabic graph with decorated permutation $\taunk$, for $1\leq k\leq n-1$, and suppose that $e$ is an edge such that the strands labeled $1$ and $n$ traverse $e$ in the opposite directions. Then either $e$ is a boundary edge or its endpoints are of different colors.
\end{lemma}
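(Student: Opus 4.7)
The plan is to dualize to the plabic tiling $T(G)$ of $G$ (using the framework of~\cite{P,G}) and exploit the fact that the vector $\xi_n-\xi_1$ points in an extreme direction.

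First I would translate the hypothesis. Since strands $1$ and $n$ both cross $e$, the two face labels of $G$ adjacent to $e$ differ exactly in $\{1,n\}$; write them as $F_1=S\cup\{1\}$ and $F_2=S\cup\{n\}$ for some $S\in\binom{[n]\setminus\{1,n\}}{k-1}$. In $T(G)$ these labels are the two endpoints of the dual edge $\tilde e$, which is therefore a translate of the chord $\xi_1\xi_n$. Because $0,\xi_1,\dots,\xi_n$ are in clockwise convex position, $\xi_1\xi_n$ is an edge of the convex hull of $\{\xi_1,\dots,\xi_n\}$, and every $\xi_a$ with $a\in\{2,\dots,n-1\}$ lies strictly on one side of the line through $\xi_1\xi_n$ (the side opposite to~$0$). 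Translating this, the affine line through $\tilde e$ has a well-defined ``inner'' side (containing all $\sum_{s\in S}\xi_s+\xi_a$ for $a\in\{2,\dots,n-1\}\setminus S$) and an ``outer'' side.

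Next I would identify the possible polygons of $T(G)$ adjacent to $\tilde e$. A white polygon, which corresponds to a white vertex of $G$, has vertex set $\{S'\cup\{a\}:a\in A\}$ for some $(k-1)$-subset $S'$; matching $\{S\cup\{1\},S\cup\{n\}\}$ forces $S'=S$ and $\{1,n\}\subset A$. Its remaining vertices $\sum_{s\in S}\xi_s+\xi_a$ (for $a\in A\setminus\{1,n\}$) all sit on the inner side. A black polygon corresponds to a black vertex of $G$ and has vertex set $\{T\setminus\{b\}:b\in A'\}$ for some $(k{+}1)$-subset $T$; this forces $T=S\cup\{1,n\}$ and $\{1,n\}\subset A'$. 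Its remaining vertices $\sum_{s\in S\setminus\{b\}}\xi_s+\xi_1+\xi_n$ (for $b\in A'\cap S$), after subtracting the midpoint of $\tilde e$, have displacement $(\xi_1+\xi_n)/2-\xi_b$, which lies on the outer side since $\xi_b$ lies on the inner side.

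Finally, two polygons of $T(G)$ sharing the edge $\tilde e$ lie on opposite sides of it, so we cannot have both polygons white (both would have to extend to the inner side) nor both black (both to the outer side); hence the endpoints of $e$ must be of different colors. The only way this dichotomy can fail is that one of the two ``polygons'' degenerates to the edge $\tilde e$ itself, which happens exactly when $\tilde e$ lies on the boundary of the zonotope $Z_n$, equivalently, when $e$ is a boundary edge of~$G$. The main technical point to verify is that reducedness (Definition~\ref{dfn:reduced}) prevents an interior bivalent vertex from producing a genuine non-boundary bigon on one side of $\tilde e$; once this correspondence is nailed down, the case analysis above is complete.
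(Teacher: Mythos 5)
Your proof is correct and takes essentially the same route as the paper's: both pass to the dual plabic tiling, observe that the dual edge joins $S\cup\{1\}$ to $S\cup\{n\}$, and use the extremality of the positions $1$ and $n$ to force that edge onto the boundary of any clique containing it, so that a non-boundary edge must be shared by one white and one black clique. Your inner/outer-side computation is just the geometric rendering of the paper's combinatorial observation that $1$ and $n$ must appear as the extreme elements $x_1=1$, $x_r=n$ in the cyclic ordering of the clique's vertices.
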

\begin{proof}
 Consider the plabic tiling dual to $G$. Then the edge $e^*$ dual to $e$ connects two sets $T\cup 1 $ and $T\cup n$ for some $T\in{[2,n-1]\choose k-1}$. We claim that $e^*$ is either a boundary edge or belongs to the intersection of a black and a white clique. Suppose that $e^*$ is not a boundary edge, thus we may assume that it belongs to some clique $C$. If $C$ is white then its boundary vertices listed in cyclic order are $K\cup x_1,K\cup x_2,\dots,K\cup x_r$ for some $x_1<x_2<\dots<x_r$ and $K\in {[n]\choose k-1}$. If $C$ is black then its boundary vertices listed in cyclic order are $L\setminus x_1,L\setminus x_2,\dots,L\setminus x_r$ for some $x_1<x_2<\dots<x_r$ and $L\in {[n]\choose k+1}$. In either case, $1$ and $n$ have to belong to $\{x_1,x_2\dots,x_r\}$ and the only way for this to happen is if $x_1=1$ and $x_r=n$. Therefore $e^*$ belongs to the boundary of the corresponding clique $C$. Since $e^*$ is not a boundary edge itself, there must be some other clique $C'$ with a boundary edge $e^*$, and $C'$ must be of a different color than $C$. This finishes the proof of Lemma~\ref{lemma:strands1n_essential}.
\end{proof}

\begin{lemma}\label{lemma:strands1n_unique_edge}
 Let $G$ be a reduced plabic graph with decorated permutation $\taunk$, for $1\leq k\leq n-1$. Then there exists a unique edge $e$ such that the strands labeled $1$ and $n$ traverse $e$ in the opposite directions.
\end{lemma}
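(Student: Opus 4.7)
The plan is to pass to the dual plabic tiling $T_G$ and use the standard correspondence: an edge $e$ of $G$ is traversed by strands $a$ and $b$ in opposite directions iff the dual edge $e^*$ of $T_G$ joins two face labels of the form $S\cup\{a\}$ and $S\cup\{b\}$ for some $(k-1)$-subset $S$ of $[n]\setminus\{a,b\}$. Under this dictionary, the lemma reduces to the statement that $T_G$ has a unique edge joining a vertex of the form $S\cup\{1\}$ to a vertex of the form $S\cup\{n\}$.

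For existence, I would use a strand-crossing argument. Since the decorated permutation is $\tau_{k,n}$, strand $1$ runs from boundary vertex $n-k+1$ to $1$, while strand $n$ runs from boundary vertex $n-k$ to $n$. In cyclic order on the boundary circle, these four points appear as $1,\, n-k,\, n-k+1,\, n$, so the endpoints of strand $1$ interleave with those of strand $n$; equivalently, $\{1,n\}$ is not an alignment of $\tau_{k,n}$. Viewed as topological arcs in the closed disk, two arcs with interleaved endpoints must intersect in the interior, and since strands turn sharply at every interior vertex by the rules of the road, any such intersection is concentrated at an edge shared by both strands. By the definition of strand direction at a shared edge, the two strands must traverse that edge in opposite directions, which produces the desired $e$.

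For uniqueness, suppose $e_1\neq e_2$ were two such edges. By condition (3) of Definition~\ref{dfn:reduced}, I may assume that strand $1$ passes through $e_1$ before $e_2$ while strand $n$ passes through $e_2$ before $e_1$. Then the concatenation of the segment of strand $1$ running from $e_1$ to $e_2$ with the segment of strand $n$ running from $e_2$ to $e_1$ forms a simple closed curve in $G$ bounding a disk-region -- a ``bigon'' formed between the two strands. A standard bigon-reduction argument for reduced plabic graphs then shows that such a configuration can be simplified using the moves (M1)--(M3) to produce a plabic graph with strictly fewer edges shared between strands $1$ and $n$, contradicting reducedness of $G$. The main obstacle will be translating these topological claims into the paper's combinatorial setup; most likely this is handled by analyzing the plabic tiling locally around each candidate edge and invoking Lemma~\ref{lemma:strands1n_essential}, which already pins down the coloring of the endpoints of such an edge and so constrains how multiple such edges could coexist.
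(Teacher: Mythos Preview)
Your existence argument is essentially the same as the paper's: both amount to noting that strands $1$ and $n$ have interleaved boundary endpoints and hence must cross (with the boundary cases $k=1$, $k=n-1$ handled separately).

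Your uniqueness argument, however, has a genuine gap. You invoke a ``standard bigon-reduction argument'' and claim that the moves (M1)--(M3) would simplify the configuration, ``contradicting reducedness of $G$.'' But the moves (M1)--(M3) preserve the number of faces; they are \emph{moves}, not reductions, and applying them can never contradict reducedness. The configuration you describe---two strands sharing two edges, traversed in opposite order---is exactly what condition~(3) of Definition~\ref{dfn:reduced} \emph{permits}, not what it forbids. What you are really appealing to is the folklore fact that two strands in a reduced plabic graph cross at most once, but that fact is not proved by the sketch you give, and the paper does not develop the machinery (reduction moves, face-count arguments) that would make such a proof precise.

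The paper's proof of uniqueness is far shorter and purely combinatorial: if $e_1$ and $e_2$ both have this property, their dual edges in the plabic tiling join $T\cup 1$ to $T\cup n$ and $S\cup 1$ to $S\cup n$ respectively, for some $S\neq T$ of size $k-1$. All four sets are face labels of $G$, hence pairwise weakly separated. Picking $s\in S\setminus T$ and $t\in T\setminus S$, one checks directly that either $T\cup 1$ and $S\cup n$ (if $s<t$) or $T\cup n$ and $S\cup 1$ (if $t<s$) fail to be weakly separated---a contradiction. This argument uses nothing beyond the weak separation of face labels and avoids all topology.
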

\begin{proof}
It is clear that such an edge $e$ exists because the strands $1$ and $n$ have to intersect when $1<k<n-1$, and for $k=1$ or $k=n-1$ there is a boundary edge that satisfies the requirements of the lemma. So we only need to prove uniqueness. Suppose $e_1$ and $e_2$ are such edges, and let $T\cup 1,T\cup n$ be the labels of faces adjacent to $e_1$ and $S\cup 1,S\cup n$ be the labels of faces adjacent to $e_2$. Then the four sets $T\cup 1,T\cup n,S\cup 1,S\cup n$ must be weakly separated. Since $S$ and $T$ have the same size and $S\neq T$, it follows that there exist integers $s,t$ such that $s\in S\setminus T$ and $t\in T\setminus S$. If $s<t$ then $T\cup 1$ is not weakly separated from $S\cup n$ and if $t<s$ then $T\cup n$ is not weakly separated from $1\cup S$. We get a contradiction.

\end{proof}

\def\WS{\W}
We now review a result of the second author~\cite{G}.
\begin{dfn}[\cite{G}]
 We say that $S$ and $T$ are \emph{chord separated} if either $S$ surrounds $T$ or $T$ surrounds $S$. Equivalently, they are chord separated if there exist no cyclically ordered integers $a,b,c,d\in [n]$ satisfying $a,c\in S\setminus T$ and $b,d\in T\setminus S$.
\end{dfn}
As we have already noted in Remark~\ref{rmk:ws_definition}, it follows that if $|S|=|T|$ then they are chord separated if and only if they are weakly separated. In general, $S$ and $T$ may be chord separated but not weakly separated, for example, if $S=\{1,3\}$ and $T=\{2\}$. It turns out that the purity phenomenon also occurs for chord separation:
\begin{thm}[\cite{G}]\label{thm:chord}
 Every maximal by inclusion chord separated collection $\W\subset 2^{[n]}$ is also maximal by size:
 \[|\WS|={n\choose 0}+{n\choose 1}+{n\choose 2}+{n\choose 3}.\]
\end{thm}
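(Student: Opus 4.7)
The plan is to separate the argument into an upper bound and a lower bound on $|\W|$. The upper bound follows from Theorem~\ref{thm:purity_of_nchoosek} applied one cardinality at a time, while the lower bound---which is the essential content of the purity statement---requires proving an \emph{Extension Lemma} that lets us enlarge non-maximal stratified pieces.

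For the upper bound, observe (as in Remark~\ref{rmk:ws_definition}) that two sets of the same size are chord separated if and only if they are weakly separated. Writing $\W_k := \W \cap {[n]\choose k}$, each $\W_k$ is therefore a weakly separated subcollection of ${[n]\choose k}$, so Theorem~\ref{thm:purity_of_nchoosek} gives $|\W_k|\leq k(n-k)+1$. Summing and using the elementary identity
\[\sum_{k=0}^{n}\bigl(k(n-k)+1\bigr)=(n+1)+{n+1\choose 3}={n\choose 0}+{n\choose 1}+{n\choose 2}+{n\choose 3}\]
yields $|\W|\leq\sum_{j=0}^{3}{n\choose j}$.

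For the lower bound, the plan is to prove the following \emph{Extension Lemma}: if $\W \subset 2^{[n]}$ is chord separated and $T\in{[n]\choose k}$ is weakly separated from every element of $\W_k$, then $T$ is chord separated from every element of $\W$. Granting this, suppose $\W$ is maximal by inclusion but $|\W_k|<k(n-k)+1$ for some $k$; then by Theorem~\ref{thm:purity_of_nchoosek} there exists $T\in{[n]\choose k}\setminus\W_k$ weakly separated from every element of $\W_k$. The Extension Lemma then lets us adjoin $T$ to $\W$ while preserving chord separation, contradicting maximality. Thus $|\W_k|=k(n-k)+1$ for every $k$, giving equality in the upper bound.

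The main obstacle will be the Extension Lemma itself. Given $T\in{[n]\choose k}$ weakly separated from $\W_k$ and $U\in\W$ with $|U|=m\neq k$, we must rule out the existence of cyclically ordered $a,b,c,d$ with $a,c\in T\setminus U$ and $b,d\in U\setminus T$. The natural strategy is to extend $\W_k$ to a maximal weakly separated collection $\W_k'\subset{[n]\choose k}$ and pass to the corresponding reduced plabic graph $G$ (Section~\ref{section:further}): a careful tracking of the strands of $G$ labeled by $a,b,c,d$ should produce an intermediate face label $S\in\W_k'$ that must in fact lie in $\W_k$ (since it arises from pieces forced by $U$ alone) and which then fails weak separation with $T$, yielding the required contradiction. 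A cleaner but less elementary alternative is to invoke the correspondence between chord separated collections in $2^{[n]}$ and fine zonotopal tilings of the three-dimensional cyclic zonotope $Z_{n,3}$; in that language, the Extension Lemma becomes a local geometric statement about lifting a candidate vertex to a vertex of the tiling, and this is the route pursued in~\cite{G}.
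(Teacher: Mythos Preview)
First, note that the present paper does not contain a proof of Theorem~\ref{thm:chord}: the result is quoted from~\cite{G} and used as a black box. So there is no in-paper argument to compare against; the comparison below is with your own proposal.

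Your upper bound is correct, including the identity $\sum_{k=0}^{n}(k(n-k)+1)=(n+1)+\binom{n+1}{3}=\sum_{j=0}^{3}\binom{n}{j}$.

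The gap is the Extension Lemma. As you state it (for an arbitrary chord separated $\W$), it is false. Take $n=5$, $\W=\{\{2,4\}\}$, $k=3$, and $T=\{1,3,5\}$. Then $\W_3=\emptyset$, so $T$ is vacuously weakly separated from every element of $\W_3$; yet $T$ and $\{2,4\}$ are not chord separated, since $1<2<3<4$ with $1,3\in T\setminus\{2,4\}$ and $2,4\in\{2,4\}\setminus T$. Your plabic-graph sketch cannot rescue this: in the counterexample $\W_k=\emptyset$, so there is no set ``forced by $U$ alone'' to lie in $\W_k$, and in general the extension $\W_k'$ you choose need not be chord separated from $U$, so strands labeled by elements of $U$ carry no usable information.

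If you add the hypothesis that $\W$ is \emph{maximal} chord separated, the lemma becomes plausible but is then essentially equivalent to the theorem: assuming the theorem, every $\W_m$ already has size $m(n-m)+1$ and is therefore a maximal weakly separated collection in ${[n]\choose m}$, so the hypothesis ``$T\notin\W_k$ is weakly separated from all of $\W_k$'' is vacuous. Thus the only true version of the Extension Lemma gives no independent leverage. The genuine proof in~\cite{G} does not proceed via such a lemma; it establishes a bijection between maximal chord separated collections and fine zonotopal tilings of the three-dimensional cyclic zonotope, from which purity follows because all fine tilings of a fixed zonotope have the same vertex set size. Your closing sentence alludes to this, but the reformulation of the Extension Lemma as ``lifting a candidate vertex'' is not how that argument runs.
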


It is also shown in~\cite{G} that maximal chord separated collections correspond to \emph{admissible families of reduced trivalent plabic graphs}. That allows us to prove the following generalization of Theorem~\ref{thm:LR}, part~(\ref{item:LR_sequence}):

\begin{lemma}\label{lemma:chord}
 Suppose that $\WS\subset2^{[n]}$ is a maximal by inclusion chord separated collection, and let $U\subset V\subset [2,n-1]$ be such that
 \[U,(1\cup U),(U\cup n),(1\cup U\cup n),V,(1\cup V),(V\cup n),(1\cup V\cup n)\in\WS.\]
 Let $u=|U|$ and $v=|V|$ be their sizes. Then there exists a sequence
 \[U=S_u\subset S_{u+1}\subset\dots\subset S_{v}=V\]
 of subsets of $[n]$ such that for all $k=u,\dots,v$, $|S_k|=k$ and
 \[S_k,(1\cup S_k),(S_k\cup n),(1\cup S_k\cup n)\in\WS.\]
\end{lemma}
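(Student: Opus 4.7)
My plan is to proceed by induction on $v-u$. The base case $v=u$ is immediate, taking $S_u:=U=V$. For the inductive step, it suffices to exhibit an element $j\in V\setminus U$ such that the four sets $U\cup\{j\}$, $\{1\}\cup U\cup\{j\}$, $U\cup\{j\}\cup\{n\}$, and $\{1,n\}\cup U\cup\{j\}$ all lie in $\WS$; setting $S_{u+1}:=U\cup\{j\}$ and applying induction to the pair $(S_{u+1},V)$ then produces the chain.

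Since $\WS$ is maximal by inclusion, each of these four sets belongs to $\WS$ if and only if it is chord separated from every $T\in\WS$. For $T\in\WS$, write $T':=T\cap [2,n-1]$, and observe that, because $1$ and $n$ are cyclically adjacent in $[n]$, the conjunction of the four chord-separation conditions on $S:=U\cup\{j\}$ reduces to a single condition on $(S,T')$ determined by $T\cap\{1,n\}$: namely, $S$ surrounds $T'$ when $T\cap\{1,n\}=\emptyset$; $T'$ is (weakly) to the left of $S$ when $T\cap\{1,n\}=\{1\}$; $T'$ is (weakly) to the right of $S$ when $T\cap\{1,n\}=\{n\}$; and $T'$ surrounds $S$ when $T\cap\{1,n\}=\{1,n\}$. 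The hypothesis of the lemma guarantees that each of these conditions already holds for both $S=U$ and $S=V$.

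The heart of the inductive step is then to show that some $j\in V\setminus U$ makes the relevant condition hold at $S=U\cup\{j\}$ simultaneously for every $T\in\WS$. I plan to argue by contradiction: assume that every $j$ admits a witness $T(j)\in\WS$ whose condition fails at $U\cup\{j\}$ but holds at both $U$ and $V$. The position of $j$ in the linear order on $[n]$, relative to the segments of $T'(j)$ prescribed by $V$'s surround or left/right structure, is then squeezed into a very restricted range; invoking Theorem~\ref{thm:chord} to constrain how different witnesses $T(j)$ can coexist inside a chord separated collection should yield a structural contradiction, either directly or after comparing each witness with the data of the $8$ ambient sets $T_\ast(U),T_\ast(V)$.

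The main obstacle is exactly this existence claim: the four kinds of conditions must all be met by a single $j$, and a naive choice need not work. For instance, with $n=6$, $U=\emptyset$, $V=\{3,5\}$ and $T=\{2,5\}\in\WS$, the minimum $j=3\in V\setminus U$ violates ``$S$ surrounds $T'$'', whereas $j=5$ succeeds. A clean uniform treatment is to appeal to the plabic-tiling description of maximal chord separated collections underlying Theorem~\ref{thm:chord} from~\cite{G}, which realizes $\WS$ via an admissible family of reduced trivalent plabic graphs; the required chain $U=S_u\subset S_{u+1}\subset\dots\subset S_v=V$ can then be read off as the sequence of face labels along the boundary of a distinguished region in this family where the strands $1$ and $n$ cross.
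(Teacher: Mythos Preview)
Your proposal contains a genuine gap. The bulk of the argument is the inductive scheme, and you correctly identify its crux: for the step to go through you must produce a single $j\in V\setminus U$ such that $U\cup\{j\}$ satisfies, for every $T\in\WS$, whichever of the four conditions (surrounds, left-of, right-of, surrounded-by) is dictated by $T\cap\{1,n\}$. You then say a contradiction ``should'' follow from Theorem~\ref{thm:chord} if no such $j$ exists, but you give no mechanism for this, and your own example ($n=6$, $U=\emptyset$, $V=\{3,5\}$, $T=\{2,5\}$) already shows the obvious candidates fail. Nothing in the proposal explains how the pairwise chord-separation of the witnesses $T(j)$ forces a common good $j$; this is the entire content of the lemma, and it is left open.

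Your final paragraph then abandons the induction and points to the plabic-tiling realization of $\WS$ from~\cite{G}. This is precisely the paper's route, but you only gesture at it. The paper's argument rests on two preparatory lemmas: that in each tiling $\Sigma_{k+1}$ there is a \emph{unique} edge traversed by strands $1$ and $n$ in opposite directions (Lemma~\ref{lemma:strands1n_unique_edge}), and that this edge lies on a white triangle (Lemma~\ref{lemma:strands1n_essential}). The set $S_k$ is defined from this unique edge, uniqueness forces $S_u=U$ and $S_v=V$, and the white-triangle structure together with admissibility of the family yields $S_{k-1}\subset S_k$. None of these ingredients appear in your sketch; ``read off the chain from where strands $1$ and $n$ cross'' is the right picture but not yet a proof.
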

\begin{proof}
\def\TPTiling{{\Sigma}}
\def\Vert{ \operatorname{Vert}}

 Consider the admissible family $\TPTiling_*=(\TPTiling_0,\TPTiling_1,\dots,\TPTiling_n)$ of triangulated plabic tilings that corresponds to $\WS$. The collection of vertices of the tiling $\TPTiling_k$ is denoted by $\Vert(\TPTiling_k)\subset {[n]\choose k}$ and satisfies
 \[\Vert(\TPTiling_k)=\WS\cap{[n]\choose k}.\]
  Let us explain the meaning of \emph{admissible}. Take any $0<k<n$ and any edge $e\in \TPTiling_k$ with endpoints $T,T'\subset [n]$ (in the dual trivalent plabic graph of $\TPTiling_k$, $T$ and $T'$ are labels of two faces sharing an edge). Then \emph{admissible} means that $T\cup T'$ and $T\cap T'$ must belong to $\WS$ as well, so $T\cup T'\in\Vert(\TPTiling_{k+1})$ and $T\cap T'\in\Vert(\TPTiling_{k-1})$.

We are now ready to define the sequence $U=S_u\subset\dots\subset S_v=V$. Note that $v\leq n-2$ because $V\subset [2,n-1]$. By Lemma~\ref{lemma:strands1n_unique_edge}, for each $u\leq k\leq v$ there is a unique edge $e_k$ in the plabic graph dual to $\TPTiling_{k+1}$ such that the strands labeled $1$ and $n$ traverse $e_k$ in the opposite directions. Thus the edge $e_k^*$ connects two sets $T\cup 1$ and $T\cup n$ for some $T\in {[n]\choose k}$. In this case, we set $S_k:=T$.

We need to show why $U=S_u$ and $V=S_v$ and why $S_{k-1}\subset S_{k}$ for each $u< k\leq v$. The first two claims are clear since the edge $e_k^*$ is unique but by Lemma~\ref{lemma:strands1n_essential}, there are edges in $\TPTiling_{u+1}$ and $\TPTiling_{v+1}$ that connect $U\cup 1$ with $U\cup n$ and $V\cup 1$ with $V\cup n$ respectively. Now let $u< k\leq v$ and we would like to see why $S_{k-1}\subset S_{k}$. By the definition of $S_{k-1}$, there is an edge $e_{k-1}$ in $\TPTiling_{k}$ that connects $S_{k-1}\cup 1$ with $S_{k-1}\cup n$. If $k=1$ then all the triangles in $\TPTiling_{k}$ are white, so $e_{k-1}$ belongs to the boundary of some white triangle in $\TPTiling_{k}$. Otherwise $1<k<n-1$ so $e_{k-1}$ belongs to the intersection of a white triangle with a black triangle in $\TPTiling_{k}$. In both cases, we get that $e_{k-1}$ is an edge of a white triangle in $\TPTiling_{k}$. We know that $S_{k-1}\cup 1$ and $S_{k-1}\cup n$ are vertices of this white triangle, thus there is an integer $a\in [n]$ such that $S_{k-1}\cup a$ is the third vertex of this white triangle. By the admissibility condition, the sets
\[(S_{k-1}\cup \{1,a\}),(S_{k-1}\cup \{a,n\}),(S_{k-1}\cup \{1,n\})\]
are the vertices of a black triangle in $\TPTiling_{k+1}$. In particular, there is an edge in $\TPTiling_{k+1}$ connecting $S_{k-1}\cup \{1,a\}$ with $S_{k-1}\cup \{a,n\}$. By Lemma~\ref{lemma:strands1n_unique_edge}, such an edge is unique and is therefore equal to $e_{k+1}$, which implies that $S_k=S_{k-1}\cup a$ and so $S_{k-1}\subset S_k$. We are done with the proof of Lemma~\ref{lemma:chord}.
\end{proof}

\section{Proof of Theorem~\ref{thm:main_nc}}\label{section:proof}
In this section, we are going to show that for any balanced pair $I,J\in{[n]\choose m}$, the collection $\A_{I,J}\subset {[n]\choose m}$ is a pure domain of rank
\[m(n-m)-k^2+2k+\sum_{i=1}^{2u} {p_i\choose 2}.\]

\subsection{Plan of the proof}

Let $\W$ be a maximal weakly separated collection inside $\A_{I,J}$. The sets $\It$ and $\Jt$ partition the circle $[2k]$ into $2u$ intervals $(\Pt_1,\Pt_2,\dots,\Pt_{2u})$ for some $u\geq 1$. Here $\Pt_i\in {[2k]\choose k}$ for all $i=1,\dots, 2u$. We set $p_i:=|\Pt_i|$.

Our proof is going to consist of several quite involved steps, so we start with the outline of these steps. The first one is
\begin{lemma}\label{lemma:LR}
 For each $1\leq i\leq 2u$, there exists a sequence of subsets
 \[\emptyset=\St_0^i\subset \St_1^i\subset\dots\subset \St_{p_i}^i=\Pt_i\]
such that $|\St_t^i|=t$ for $0\leq t\leq p_i$ and, moreover, any set in $\W$ is weakly separated from any $S\in \A_{I,J}$ that satisfies the following conditions:
  \begin{enumerate}[label=(\alph*)]
   \item\label{item:predictable} $\left(I\cap J\right)\subset S\subset \left(I\cup J\right)$;
   \item \label{item:endpoints} if $\Pt_l$ and $\Pt_r$ are the endpoints of $\St$ then $\St\cap \Pt_l=\St_t^l$ and $\St\cap \Pt_r=\St_q^r$ for some $0\leq t\leq p_i$ and $0\leq q\leq p_r$.
  \end{enumerate}
\end{lemma}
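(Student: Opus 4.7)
The plan is to reduce the problem to a collection of one-dimensional left-right separation problems, one per interval $\Pt_i$, and then apply the greedy construction used in the proof of Theorem~\ref{thm:LR}. First I would fix an interval $\Pt_i$ and classify each $T\in\W$ by its interaction with $\Pt_i$ as provided by Lemma~\ref{lemma:characterizationI}. If $\Pt_i$ lies in $T$'s internal part, or if $\Pt_i$ is entirely outside $T$'s arc, then $T$ and any candidate $S$ with $\Pt_i$ as an endpoint automatically agree on $\Pt_i$, so such $T$ impose no constraint on $\St\cap \Pt_i$. The only nontrivial constraints come from sets $T\in\W$ for which $\Pt_i$ is itself an endpoint; each such $T$ contributes a prefix $\widetilde{T}\cap \Pt_i$, and weak separation of $S$ and $T$ reduces to requiring the candidate prefix $\St\cap \Pt_i$ to lie ``to the left'' or ``to the right'' of $\widetilde{T}\cap \Pt_i$, with the direction determined by which side of $T$'s arc contains $\Pt_i$.

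Next I would construct the chain $\St_0^i\subset \St_1^i\subset \cdots\subset \St_{p_i}^i$ inductively, mimicking the greedy procedure in Section~\ref{section:proofLR}. Having built $\St_t^i$, I would take $j$ to be the minimum element of $\Pt_i\setminus \St_t^i$ (under the natural linear order on $\Pt_i$) such that $\St_{t+1}^i:=\St_t^i\cup\{j\}$ is to the right of every ``left-constraint'' prefix coming from $\W$. The delicate step is to check that $\St_{t+1}^i$ remains to the left of every ``right-constraint'' prefix, and this translates essentially verbatim from the contradiction argument at the end of the proof of Theorem~\ref{thm:LR}: assuming some right constraint $R$ is violated, one locates a blocking left constraint $L$ and derives a failure of weak separation between the corresponding sets in $\W$.

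Finally, I would verify compatibility across intervals: for any candidate $S$ with endpoints $\Pt_l,\Pt_r$ and prefixes drawn from the chains $\St_\bullet^l$ and $\St_\bullet^r$, and for any $T\in\W$, the pair $(S,T)$ is weakly separated. By Lemma~\ref{lemma:characterizationI} applied to the balanced pair $(I,J)$, the only possible obstructions to weak separation localize on the at most four intervals that serve as endpoints of $S$ or $T$; on all other intervals $\St$ and $\widetilde{T}$ either coincide with a fixed template dictated by $I\cap J$ and $I\cup J$, or one is fully contained in the other. The main obstacle I expect is precisely this decomposition: weak separation is a priori a global condition, and reducing it to an interval-by-interval check relies essentially on the balanced hypothesis, which rules out the ``wrap-around'' configurations eliminated at the end of the proof of Lemma~\ref{lemma:characterizationI}. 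Once that decomposition is established, the chains constructed in the previous step automatically make every local check succeed, and the lemma follows.
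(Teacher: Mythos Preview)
Your overall localization idea---reduce to one problem per interval $\Pt_i$ and build a chain there---matches the paper, but the tool you reach for is different and the reduction you sketch has a real gap.

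The paper does \emph{not} run the greedy procedure of Theorem~\ref{thm:LR} on each $\Pt_i$. Instead it enlarges $\Pt_i$ to $\widehat{P_i}=[\hat e_i,\hat h_i]\subset[n]$ (picking up one element of $I\bigtriangleup J$ on each side), applies the intersection--shift map $\phi_i$, and observes only that $\phi_i(\W)$ is \emph{chord} separated. It then proves (Lemma~\ref{lemma:subsets_8}) that $\phi_i(\W)$ is chord separated from eight anchor sets built out of $U=\phi_i(I\cap J)$ and $V=\phi_i(I\cup J)\setminus\{1,\hat p_i\}$, and invokes Lemma~\ref{lemma:chord} (whose proof uses the admissible families of triangulated plabic tilings from~\cite{G} together with Lemmas~\ref{lemma:strands1n_essential} and~\ref{lemma:strands1n_unique_edge}) to produce the chain. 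Only \emph{after} the chain exists does the paper prove the separating--chord statement Lemma~\ref{lemma:a_ib_i}, which is strictly stronger than Lemma~\ref{lemma:LR} and is what feeds into Lemma~\ref{lemma:outside}.

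The gap in your route is the ``translates essentially verbatim'' step. After projecting to $\widehat{P_i}$ the collection $\phi_i(\W)$ is not a subcollection of $\LR{\hat p_i-1}$: elements of $\W$ with $\Pt_i$ in their internal part project to sets containing \emph{both} boundary markers, elements with $\Pt_i$ outside their arc project to sets containing \emph{neither}, and in every case the projection may freely include or omit elements of $(I\cap J)\cap\widehat{P_i}$ and of $\overline{I\cup J}\cap\widehat{P_i}$. So the sets playing the role of $L\cup\{0\}$ and $R\cup\{n\}$ are not determined by their trace on $\Pt_i$ alone, and ``$\St^i_{t+1}$ is to the right of $\widetilde T\cap\Pt_i$'' is not the correct chord--separation condition. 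Straightening this out is exactly the content of Lemma~\ref{lemma:subsets_8}, whose case analysis already uses the balanced hypothesis in the same way as the end of the proof of Lemma~\ref{lemma:characterizationI}. And even once the chains are built, the ``compatibility across intervals'' verification you defer to the last paragraph is the full Lemma~\ref{lemma:a_ib_i}; it does not fall out of the per--interval construction but requires its own argument producing, for each $R\in\W$ and each $i$, a single chord that works simultaneously for $S_i$ and $S_{i-1}$.
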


This lemma is going to be an application of Lemma~\ref{lemma:chord} together with Lemma~\ref{lemma:characterizationI}. Now consider all the sets $S\in \A_{I,J}$ satisfying conditions~\ref{item:predictable} and~\ref{item:endpoints}. It is easy to see that such sets form a generalized cyclic pattern $\S=(S_1,\dots,S_{2k})$ that we now define. First, note that the map $\proj$ is a bijection between the sets $S\in{[n]\choose m}$ that satisfy~\ref{item:predictable} and the sets $\St\in {[2k]\choose k}$. Consider a set $S_m$ satisfying~\ref{item:predictable}-\ref{item:endpoints}. We have
\[\proj(S_m)=\St_t^i\cup \Pt_{i+1}\cup\dots\cup \Pt_{j-1}\cup \St_u^j.\]
Then we can define ``the next'' set $S_{m+1}$ by defining its projection:
\[\proj(S_{m+1})=\begin{cases}
           \St_{t-1}^i\cup \Pt_{i+1}\cup\dots\cup \Pt_{j-1}\cup \St_{u+1}^j, & \text{if } u<p_j;\\
           \St_{t-1}^i\cup \Pt_{i+1}\cup\dots\cup \Pt_{j-1}\cup \Pt_j\cup \St_1^{j+1}, & \text{if } u=p_j.
          \end{cases}\]
Clearly, every subset $S\in \A_{I,J}$ satisfying conditions~\ref{item:predictable} and \ref{item:endpoints} appears somewhere in this sequence $\S=(S_1,S_2,\dots,S_{2k})$. Here $S_1$ is chosen so that its left endpoint is $\Pt_1$ and $\St_1\cap \Pt_1=\Pt_1$. It is clear that $\S$ is a generalized cyclic pattern.

Since $\D_\S^\inn$ and $\D_\S^\outt$ form a \emph{complementary pair} (see~\cite{DKK14}), and because the rank of $[n]\choose m$ equals $m(n-m)+1$, it follows that
\begin{equation}\label{eqn:ranks}
\rk \D_\S^\inn+\rk\D_\S^\outt=m(n-m)+1+|\S|=m(n-m)+2k+1.
\end{equation}

To calculate the ranks of $\D_\S^\inn$ and $\D_\S^\outt$, we use Theorem~\ref{thm:iso} to show that
\begin{lemma}\label{lemma:iso}
The generalized cyclic pattern $\proj\S$ is isomorphic (in the sense of Definition~\ref{dfn:iso}) to $\I(\tau_\It\circ\tau_{k,2k})$, the Grassmann necklace corresponding to the canonical decorated permutation $\tau_\It\circ\tau_{k,2k}$ associated with $\It$.
\end{lemma}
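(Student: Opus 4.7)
The plan is to apply Theorem~\ref{thm:iso}. First I would verify that $\proj\S$ is a Grassmann-like necklace. By inspection of the two transition rules (regular and shift), each consecutive pair differs by exactly one element removed and one element added, and these are distinct. Tracking the dynamics, the right endpoint cycles through $\Pt_1, \Pt_2, \ldots, \Pt_{2u}$, adding the elements of $\Pt_i$ in the forward order $a_1^i, \ldots, a_{p_i}^i$, while the left endpoint cycles through the same intervals and removes the elements of $\Pt_i$ in the reverse order $a_{p_i}^i, \ldots, a_1^i$. The full cycle has $\sum_i p_i = 2k$ steps, $\act(\proj\S) = [2k]$, and the color function $\col_\S$ is trivial.

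Next I would compute $\sigma$ and $\pi$ explicitly. Let $\alpha \in \Sfrak_{2k}$ be the permutation that acts within each interval $\Pt_i$ by sending the $v$-th element (in the canonical increasing order) to $a_v^i$. A direct computation with the transition rules, starting from the paper's $S_1$ (left endpoint $\Pt_1$ fully included), shows that in the canonical case $\alpha = \mathrm{id}$ the sequence of removed elements (read as one-line notation) is exactly $\tau_\It$, while the sequence of added elements is $\tau_{k,2k}$. For a general $\St^i$-sequence coming from Lemma~\ref{lemma:LR} we obtain $\sigma = \alpha \circ \tau_\It$ and $\pi = \alpha \circ \tau_{k,2k}$, hence
\[\sigma^{-1}\pi \;=\; \tau_\It^{-1} \circ \alpha^{-1} \circ \alpha \circ \tau_{k,2k} \;=\; \tau_\It \circ \tau_{k,2k},\]
where the last equality uses that $\tau_\It$ is an involution. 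In particular $\sigma^{-1}\pi$ is independent of the choice of $\W$. Applying Theorem~\ref{thm:iso} then yields
\[\proj\S \;\sim\; \CC^\circ(\sigma^{-1}\pi,\col_\S) \;=\; \I(\tau_\It \circ \tau_{k,2k}),\]
the final equality being valid because the balanced hypothesis $p_i + p_j < k$ (for $i \ne j$) forces $p_i < k$, so no element and its cyclic $k$-shift can lie in the same interval. Hence $\tau_\It \circ \tau_{k,2k}$ has no fixed points, and the Grassmann necklace $\I(\tau_\It \circ \tau_{k,2k})$ has no adjacent repetitions, so its reduction $\CC^\circ$ coincides with $\I$ itself.

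The main obstacle will be the combinatorial bookkeeping in the canonical computation of $\sigma$ and $\pi$. One must verify that, with the paper's choice of $S_1$, the step at which the left endpoint first becomes $\Pt_i$ is exactly $1 + p_1 + \cdots + p_{i-1}$, and then the $r$-th removal from $\Pt_i$ (for $r = 0, \ldots, p_i - 1$) hits the canonical element at local position $p_i - r$; symmetrically, the right endpoint runs ``ahead'' of the left by $k$ steps, which yields the identification $\pi = \tau_{k,2k}$ canonically. Handling the shift transitions at the interval boundaries uniformly---so that the shift addition $a_1^{J_t+1}$ is counted at precisely the correct step---is the delicate part of the verification, and the key payoff is the cancellation of $\alpha$ in $\sigma^{-1}\pi$, which makes the isomorphism class of $\proj\S$ intrinsic.
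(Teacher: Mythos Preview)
Your proposal is correct and follows essentially the same route as the paper: define the permutation $\alpha$ (the paper calls it $\gamma$) recording the order in which each $\St_t^i$ is built, compute $\sigma=\alpha\circ\tau_{\It}$ and $\pi=\alpha\circ\tau_{k,2k}$, cancel $\alpha$ to get $\sigma^{-1}\pi=\tau_{\It}\circ\tau_{k,2k}$, and invoke Theorem~\ref{thm:iso}. Your extra paragraph checking that $\tau_{\It}\circ\tau_{k,2k}$ is fixed-point free (so that $\CC^\circ$ coincides with the unreduced necklace $\I$) is a detail the paper leaves implicit, and your argument for it via $p_i<k$ is correct.
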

\def\Ical{ \mathcal{I}}
The Grassmann necklace $\I(\tau_\It\circ\tau_{k,2k})$ has a unique preimage $\hat \Ical$ under $\proj$ that consists of sets satisfying~\ref{item:predictable}. It is then clear that $\hat\Ical$ and $\S$ are isomorphic as well and that $\ell(\hat\Ical)=\ell(\tau_\It\circ\tau_{k,2k})$. By Lemma~\ref{lemma:iso} and Remark~\ref{rmk:length}, it follows that
\[\rk\D_\S^\inn=1+\ell(\tau_\It\circ\tau_{k,2k})=1+k^2-\sum_{i=1}^{2u} {p_i \choose 2}.\]

Substituting this into~(\ref{eqn:ranks}) yields
\[\rk\D_\S^\outt=m(n-m)-k^2+2k+\sum_{i=1}^{2u} {p_i \choose 2}.\]

Finally, using the geometric definition of domains $\D_\S^\inn$ and $\D_\S^\outt$ given in Proposition \ref{prop:geometricdef} we will show:

\begin{lemma}\label{lemma:outside}
 \[\W\subset \D_\S^\outt\subset \A_{I,J}.\]
\end{lemma}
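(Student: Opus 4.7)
My plan is to establish the two inclusions via a common framework: show that $I$ and $J$ themselves lie in $\D_\S^\inn$, apply Theorem~\ref{thm:purity_of_generalizedcyclic} to $\S$ to get automatic weak separation between $\D_\S^\inn$ and $\D_\S^\outt$, and finally pin down which elements of $\A_{I,J}$ fall on the ``inside'' using the structural Lemma~\ref{lemma:characterizationI}.

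\emph{Step 1: $I, J \in \D_\S^\inn$.} Every $S \in \S$ lies in $\A_{I,J}$, so $I$ and $J$ are weakly separated from $\S$, i.e., $I, J \in \D_\S$. For the geometric containment $I, J \in \R_\S^\inn$, I would transfer the question via Lemma~\ref{lemma:iso} to the Grassmann-necklace side: it suffices that $\proj I = \It$ and $\proj J = \Jt$ belong to the positroid of $\tau_\It \circ \tau_{k,2k}$, which by Remark~\ref{rmk:positroid} amounts to checking $\hat\Ical_i \prec_i \It$ and $\hat\Ical_i \prec_i \Jt$ for every $i$. This can be read off directly from the explicit description of $\tau_\It$ as a product of reversals of the intervals $\Pt_j$, combined with the rotation $\tau_{k,2k}$.

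\emph{Step 2: $\D_\S^\outt \subset \A_{I,J}$.} Lemma~\ref{lemma:iso} transfers the non-self-intersection conditions (\ref{item:non_self_int_1})--(\ref{item:non_self_int_2}) from the Grassmann necklace $\I(\tau_\It\circ\tau_{k,2k})$ to $\S$, so Theorem~\ref{thm:purity_of_generalizedcyclic} applies and every $R \in \D_\S^\outt$ is weakly separated from every element of $\D_\S^\inn$. Combining with Step~1, every such $R$ is weakly separated from both $I$ and $J$, hence $R \in \A_{I,J}$.

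\emph{Step 3: $\W \subset \D_\S^\outt$.} Lemma~\ref{lemma:LR} gives $\W \subset \D_\S = \D_\S^\inn \cup \D_\S^\outt$, and the maximality of $\W$ together with $\S \subset \A_{I,J}$ and the fact that $\W$ is weakly separated from $\S$ forces $\S \subset \W$. Since $\S \subset \D_\S^\outt$ trivially, it remains to rule out any $R \in \W$ lying in $\D_\S^\inn \setminus \S$. The key claim is
\[\A_{I,J} \cap \D_\S^\inn = \S,\]
from which $\W \cap \D_\S^\inn \subset \S \subset \D_\S^\outt$. To prove the claim: the inclusion $\supset$ is built into the definition of $\S$, while for the reverse, let $R \in \A_{I,J} \cap \D_\S^\inn$. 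The geometric description of $\R_\S^\inn$ places it inside the translated zonotope generated by $\{\xi_s : s \in I\bigtriangleup J\}$ shifted by $\sum_{s \in I\cap J}\xi_s$; this forces $R$ to satisfy condition~\ref{item:predictable} of Lemma~\ref{lemma:LR}. Then Lemma~\ref{lemma:characterizationI} applied to $R$ (which is in $\A_{I,J}$) yields the four-arc decomposition, and under~\ref{item:predictable} the two ``flat'' arcs collapse onto $I\cap J$ and $I\cup J$ respectively; comparing endpoints of the resulting decomposition with the chains $\St_t^i$ from Lemma~\ref{lemma:LR} forces condition~\ref{item:endpoints} to hold, so $R \in \S$.

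\emph{Main obstacle.} The most delicate step is the last one: reading off the endpoint condition~\ref{item:endpoints} for an element $R \in \A_{I,J}$ already known to be geometrically inside $\zeta_\S$. The chains $\St^i_t$ were engineered so that $\W$ is weakly separated from every set with those specific endpoints, but the converse direction --- that the unique endpoints compatible with membership in $\D_\S^\inn$ are exactly the $\St^i_t$'s --- requires a careful matching between the combinatorial four-arc structure of Lemma~\ref{lemma:characterizationI} and the geometry of $\R_\S^\inn$, probably routed through the projection $\proj$ and the isomorphism with a Grassmann necklace. An alternative approach would be to bypass this by invoking maximality of $\W$ more directly, using that any putative $R \in \W \cap (\D_\S^\inn \setminus \S)$ would contradict the uniqueness built into the construction of the sequences $\St_t^i$ in Lemma~\ref{lemma:LR}.
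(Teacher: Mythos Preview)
Your Steps~1 and~2 essentially match the paper's argument for $\D_\S^\outt\subset\A_{I,J}$: show $I,J\in\D_\S^\inn$ via the positroid description, then invoke Theorem~\ref{thm:purity_of_generalizedcyclic}. (One minor point: the non-self-intersection conditions (\ref{item:non_self_int_1})--(\ref{item:non_self_int_2}) are not obviously preserved by the isomorphism of Lemma~\ref{lemma:iso}, since that isomorphism relabels ground-set elements while the conditions refer to the cyclic order on $[n]$; the paper instead verifies them for $\S$ directly, by observing that each $(k{-}1)$-set is contained in at most two elements of $\S$.)

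Step~3 is where your approach diverges and runs into a genuine gap. Your key claim $\A_{I,J}\cap\D_\S^\inn=\S$ would indeed finish the argument, but your justification is incomplete in two places. First, the assertion that geometric membership in the translated sub-zonotope forces condition~\ref{item:predictable} is not valid: the map $R\mapsto\sum_{r\in R}\xi_r$ from ${[n]\choose m}$ into $Z_n\subset\mathbb R^2$ is far from injective, so a set $R$ whose image lies in the sub-zonotope need not satisfy $(I\cap J)\subset R\subset(I\cup J)$. (In the complementary case $n=2k$ condition~\ref{item:predictable} is vacuous, but in the general setting of Theorem~\ref{thm:main_nc} it is not.) Second---and this you already flag---you give no argument forcing condition~\ref{item:endpoints}; the chains $\St^i_t$ were constructed from $\W$, and nothing so far says they are the only endpoint patterns compatible with lying inside $\zeta_\S$.

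The paper bypasses this characterization entirely. Rather than analyze what $\D_\S^\inn$ contains, it proves directly that each $R\in\W$ lies in $\R_\S^\outt$ by a geometric separation argument. The key tool is Lemma~\ref{lemma:a_ib_i}, a strengthening of Lemma~\ref{lemma:LR} established during its proof: for every edge $[S_{i-1},S_i]$ of $\zeta_\S$ there is a \emph{single} combinatorial chord (through points $a_i\in(\gamma,\delta]$ and $b_i\in[\alpha,\beta)$, in the notation of Lemma~\ref{lemma:characterizationI}) that simultaneously separates $R$ from both $S_{i-1}$ and $S_i$. In $\mathbb R^2$ this gives, for each edge of the polygon $\zeta_\S$, a linear functional $w_i$ with $\langle R,w_i\rangle\ge\langle S_{i-1},w_i\rangle$ and $\langle R,w_i\rangle\ge\langle S_i,w_i\rangle$. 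The paper then proves an elementary geometric fact (Lemma~\ref{lemma:geometry}): if a point admits such a separating functional for every edge of a closed polygonal curve, and all these functionals lie in a common open half-plane, then the point is outside the curve. The balancedness hypothesis is used exactly here to guarantee the half-plane condition, since it forces the chord endpoints $a_i,b_i$ to span less than half of the regular $2k$-gon. This constructive edge-by-edge separation is the missing idea in your Step~3.
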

This lemma (whose proof involves Theorem~\ref{thm:purity_of_generalizedcyclic}) implies that $\W$ is a maximal weakly separated collection inside $\D_\S^\outt$, and thus $|\W|=\rk\D_\S^\outt$,
which completes the proof of Theorem~\ref{thm:main_nc}. \qed

We now proceed to the proofs of Lemmas~\ref{lemma:LR},~\ref{lemma:iso} and~\ref{lemma:outside}.

\subsection*{Proof of Lemma~\ref{lemma:LR}}

\newcommand\Phat[1]{\widehat{P_#1}}
\newcommand\Qhat[1]{\widehat{Q_#1}}
\def\Rhat{\widehat{R}}
\def\phiI{{\phi_i(\Qhat{i})}}
\def\phiJ{{\phi_i(\Rhat{i})}}
\def\ehat{\widehat{e}}
\def\fhat{\widehat{f}}
\def\ghat{\widehat{g}}
\def\hhat{\widehat{h}}
\def\phat{\widehat{p}}
\def\ahat{\widehat{a}}
\def\bhat{\widehat{b}}
\def\chat{\widehat{c}}
\def\dhat{\widehat{d}}

\def\B{{\mathcal{B}}}

For any $1 \leq i \leq 2u$, let $e_i,h_i\in [2k]$ be such that $P_i=[e_i,h_i]$ as a circular interval. Let $\ehat_i,\hhat_i\in [n]$ be such that $\proj(\ehat_i)=e_i-1$ and $\proj(\hhat_i)=h_i+1$. Then define $\Phat{i}:=[\ehat_i,\hhat_i]\subset [n]$. Recall that $p_i=|P_i|$ so define $\phat_i=|\Phat{i}|$. Thus we have
\[\proj(\Phat{i})=\proj([\ehat_i,\hhat_i])=[e_i-1,h_i+1]\supset P_i=[e_i,h_i].\]

Let $\phi_i:2^{[n]} \rightarrow 2^{[\phat_i]}$ be the ``intersection+shift'' map defined by \[\phi_i(Q)=(Q\cap \Phat{i})-\ehat_i+1=\{q-\ehat_i+1\mid q\in Q\cap\Phat{i}\}\]
for every $Q\subset[n]$.

Consider the collection $\phi_i(\W)\subset 2^{[\phat_i]}$. It is clear that this is a chord separated collection since the relation of being chord separated is preserved with respect to shifting and omitting elements. Also note that $\ehat_i$ and $\hhat_i$ belong to the symmetric difference of $I$ and $J$, so $1,\phat_i$ belong to $\phi_i(I\cup J)$ but not to $\phi_i(I\cap J)$. Let $U=\phi_i(I\cap J)$ and $V=\phi_i(I\cup J)\setminus \{1,\phat_i\}$. Our goal is to apply Lemma~\ref{lemma:chord} to $\phi_i(\W)$, $U$ and $V$. In order to do that we need to show the following

\begin{lemma}\label{lemma:subsets_8}
 The collection $\phi_i(\W)$ is chord separated from the eight sets
\begin{equation}\label{eq:subsets_8}
 U,(1\cup U),(U\cup \phat_i),(1\cup U\cup\phat_i),V,(1\cup V),(V\cup \phat_i),(1\cup V\cup \phat_i).
\end{equation}
\end{lemma}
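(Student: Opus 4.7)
The plan is to reduce all eight chord separations to chord separations of $W$ from the four canonical sets $I$, $J$, $I\cap J$, $I\cup J$ in cyclic $[n]$, and then handle a single corner case via Lemma~\ref{lemma:characterizationI}. First I would fix $W\in\W$ and establish chord separation in $[n]$: since $|W|=|I|=|J|=m$ and $W$ is weakly separated from both $I$ and $J$, Remark~\ref{rmk:ws_definition} yields chord separation of $W$ from $I$ and from $J$. For $I\cap J$ and $I\cup J$, Lemma~\ref{lemma:characterizationI} localizes the symmetric differences: $(I\cap J)\setminus W$ is confined to the arc $[\beta,\gamma]$ while $W\setminus(I\cap J)$ sits in the complementary arc through $\delta,\alpha$, which forbids any cyclic interleaving quadruple, and a mirror argument (with $W\setminus(I\cup J)\subset[\delta,\alpha]$) handles $I\cup J$.

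Next I would restrict to $\Phat{i}$: chord separation in cyclic $[n]$ restricts to the linear subinterval $\Phat{i}$, so $\phi_i(W)$ is chord separated from $\phi_i(I),\phi_i(J),\phi_i(I\cap J),\phi_i(I\cup J)$. A direct identification gives these as $U$, $V$, $U\cup\{1,\phat_i\}$, $V\cup\{1,\phat_i\}$ (with $V$ and $U\cup\{1,\phat_i\}$ swapped when $i$ is even), covering four of the eight sets. For the remaining four, I would inspect a hypothetical linear crossing $a<b<c<d$ in $[1,\phat_i]$: for $1\cup U$ and $1\cup V$ the inequalities force $a\geq 2$, hence $b,d\neq 1$, hence $b,d\in U$ (respectively $V$), giving a crossing against $U$ (respectively $V$) and a contradiction; for $U\cup\phat_i$ and $V\cup\phat_i$ the analogous analysis forces $b\in U$ (respectively $V$), and either $d\in U$ (respectively $V$) reduces directly, or $d=\phat_i$ with $a\geq 2$ transfers the crossing to $U\cup\{1,\phat_i\}$ (respectively $V\cup\{1,\phat_i\}$).

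The only remaining corner case is $a=1$, $d=\phat_i$ paired with $U\cup\phat_i$ or $V\cup\phat_i$. Lifting to $[n]$ produces $\ehat_i<b'<c'<\hhat_i$ with $\ehat_i\in W$, $\hhat_i\notin W$, $b'\in(I\cap J)\setminus W$ or $(I\cup J)\setminus W$, and $c'\in W\setminus(I\cap J)$ or $W\setminus(I\cup J)$. Since $\ehat_i,\hhat_i\in I\bigtriangleup J$, the conditions $\ehat_i\in W$ and $\hhat_i\notin W$ combined with Lemma~\ref{lemma:characterizationI} force $\Phat{i}$ to meet both $[\delta,\alpha]$ (via $\ehat_i$) and $[\beta,\gamma]$ (via $\hhat_i$); by the ``moreover'' clause of the lemma, this forces $i=r$ (for $i=l$, the symmetric analysis gives $\ehat_l\notin W$, ruling the corner case out). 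For $i=r$ the interval $\Phat{r}$ decomposes into three consecutive pieces $[\ehat_r,\alpha]\subset[\delta,\alpha]$, $(\alpha,\beta)$, $[\beta,\hhat_r]\subset[\beta,\gamma]$, and a case analysis of the arcs permitted for $b'$ and $c'$ confines them to separated pieces with $c'<b'$ in linear order, contradicting $b'<c'$.

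The hard part is this corner-case analysis: one must carefully read from Lemma~\ref{lemma:characterizationI} the precise arc restrictions on $b'$ and $c'$ in both the $I\cap J$ and $I\cup J$ variants (for example, $(I\cap J)\setminus W$ is excluded from $[\delta,\alpha]$ because $W\supset I\cup J$ there and from the transition arcs $(\alpha,\beta),(\gamma,\delta)$ because $W$ contains one of $I,J$ there, confining $b'$ to $[\beta,\gamma]\cap\Phat{r}=[\beta,\hhat_r]$), and combine them with the three-piece decomposition of $\Phat{r}$ to force the linear-order contradiction $c'<b'$.
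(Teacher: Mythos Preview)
Your overall strategy is reasonable and somewhat different from the paper's: you first establish that $W$ is chord separated from the four sets $I$, $J$, $I\cap J$, $I\cup J$ (the paper only uses $I$ and $J$), which gives you $U$, $V$, $1\cup U\cup\phat_i$, $1\cup V\cup\phat_i$ for free after restriction. The paper instead derives $U$ last, as an easy consequence of $1\cup U$ and $U\cup\phat_i$.

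However, there are genuine gaps in your reduction and in your corner case.

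\textbf{The reduction for $1\cup U$ is incomplete.} You only treat the orientation where the smallest element $a$ of the linear crossing lies on the $\phi_i(W)$-side. In the other orientation, with $a,c\in(1\cup U)\setminus\phi_i(W)$ and $b,d\in\phi_i(W)\setminus(1\cup U)$, one can have $a=1$ and $d=\phat_i$ (reducing to $U$ needs $a\neq 1$; reducing to $1\cup U\cup\phat_i$ needs $d\neq\phat_i$). So $1\cup U$ has exactly the same kind of corner case as $U\cup\phat_i$, only with the roles of $1$ and $\phat_i$ (and membership in $\phi_i(W)$) swapped. You omit this.

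\textbf{The corner case analysis is not justified.} Your key claim, that $\ehat_i\in W$ and $\hhat_i\notin W$ force $\ehat_i\in[\delta,\alpha]$ and $\hhat_i\in[\beta,\gamma]$, is false as stated. For instance (in the paper's WLOG orientation $\ehat_i,\hhat_i\in I\setminus J$), if on $(\alpha,\beta)$ one has $J\subset W\subset I$, then $\ehat_i\in(\alpha,\beta)$ with $\ehat_i\in W$ is perfectly consistent with Lemma~\ref{lemma:characterizationI}. So the arc-localization you rely on does not go through, and the subsequent conclusion $i=r$ (and the three-piece decomposition of $\Phat{r}$) is unsupported.

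The paper's proof resolves this differently. For $1\cup U$ it argues: since $1\cup U\cup\phat_i=\phi_i(I)$ is already chord separated, any crossing must use $\phat_i$ on the $\phi_i(W)$-side; lifting back to $[n]$ produces four cyclically ordered elements $d,f,g,\hhat_i$ with $d\in I\setminus R$, $f\in R\setminus I$, $g\in(I\cap J)\setminus R$, $\hhat_i\in R\setminus J$, and then a careful walk through the arcs of Lemma~\ref{lemma:characterizationI} forces $\proj R$ to be supported on at most two intervals $\Pt_r,\Pt_{r-1}$ while simultaneously having size $\geq k$ --- a direct contradiction with \emph{balancedness}. Your corner-case argument never invokes balancedness beyond what is already baked into Lemma~\ref{lemma:characterizationI}, and that alone is not enough to close the case.
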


\begin{proof}

Since $P_i$ is an interval associated with $\It$ and $\Jt$, it follows that either $\It\cap P_i=\emptyset,\Jt\cap P_i=P_i$ or vice versa. These two cases are symmetric so let us assume that $\It\cap P_i=\emptyset$ and $\Jt\cap P_i=P_i$. But then we get $e_i-1,h_i+1\in\It$. Lifting all this from $[2k]$ to $[n]$ yields that $\phi_i(I)=1\cup U\cup \phat_i$ and $\phi_i(J)=V$. We need to see why the rest six subsets from~(\ref{eq:subsets_8}) are chord separated from $\W$.

Suppose for example that $1\cup U$ is not chord separated from some $\Rhat\in\phi_i(\W)$. Since $1\cup U\cup \phat_i$ is chord separated from $\Rhat$, there must be integers $\dhat<\fhat<\ghat\in [1,\phat_i-1]$ such that
\[\dhat,\ghat\in (1\cup U)\setminus \Rhat,\quad \fhat,\phat_i\in \Rhat\setminus (1\cup U).\]
Let $d,f,g\in \Phat{i}$ be their preimages under $\phi_i$, and let $\Rhat$ be the image of some $R\in\W$. We get that $d,f,g,h_i\in[n]$ are cyclically ordered so that
\begin{equation}\label{eq:gefh}
d\in I\setminus R;\quad f\in R\setminus I;\quad g\in (I\cap J)\setminus R;\quad h_i\in R\setminus J.
\end{equation}

Let $\alpha,\beta,\gamma,\delta\in[n]$ be the integers for $R$ from Lemma~\ref{lemma:characterizationI}. Since $R\subset (I\cap J)$ on $[\beta,\gamma]$, we get that $g\in [\beta,\gamma]$ but $f,h_i\not\in[\beta,\gamma]$, and hence $d\not\in[\beta,\gamma]$.
This leaves only two possibilities for $d$, namely, $d\in (\alpha,\beta)$ or $d\in(\gamma,\delta)$, because it cannot belong to $[\delta,\alpha]$ as $d\not\in R$. We therefore have either $J\subset R\subset I$ or $I\subset R\subset J$ on $(\alpha,\beta)$. We know that $\dhat\in I,\dhat\not\in R$ which means that we have $J\subset R\subset I$ on $(\alpha,\beta)$ and in particular $d\not\in J$ so $\dhat=1$ and $d=e_i$. If $d\in (\alpha,\beta)$ then the same is true for $f$. We get a contradiction since $f\in J\setminus I$ while we have $J\subset I$ on $(\alpha,\beta)$. Thus we get that $d\in (\gamma,\delta)$. This is impossible because then $\proj(\gamma,\delta)$ would intersect all the intervals associated with $\It$ and $\Jt$ which contradicts Lemma~\ref{lemma:characterizationI}. We have shown that $\phi_i(\W)$ is weakly separated from $1\cup U$. The proof that $U\cup \phat_i$ is weakly separated from $\phi_i(\W)$ is absolutely similar. Suppose now that $U$ is not weakly separated from some $\Rhat\in\phi_i(\W)$. Then there exist $a<b<c<d\in[\phat_i]$ such that either $a,c\in U\setminus \Rhat$ and $b,d\in \Rhat\setminus U$ or vice versa. In the first case, we get that $1\cup U$ is not weakly separated from $\Rhat$, and in the second case we get that $U\cup \phat_i$ is not weakly separated from $\Rhat$. In either case we get a contradiction with what we have previously shown and thus the four sets $U,1\cup U,U\cup \phat_i,1\cup U\cup\phat_i$ are weakly separated from $\phi_i(\W)$.

The proof for the remaining four sets from~(\ref{eq:subsets_8}) follows from the fact that the relation of chord separation is preserved under replacing all subsets in $\W$ by their complements. We are done with the proof of Lemma~\ref{lemma:subsets_8}.

\end{proof}

\newcommand\Shat[1]{\widehat{S_{#1}^i}}
Lemmas~\ref{lemma:chord} and~\ref{lemma:subsets_8} together with Theorem~\ref{thm:chord} provide a family of subsets
\[U=\Shat{u}\subset\Shat{u+1}\dots\Shat{v}=V\]
of $[2,\phat_i-1]$ that are all chord separated from $\phi_i(\W)$. Here $u=|U|$ and $v=|V|$ just as in Lemma~\ref{lemma:chord}. For each $t=0,1,\dots,p_i=v-u$, we let $\St_t^i$ be the unique preimage of $\Shat{u+t}$ under $\phi_i$ that is contained in $\Phat{i}$.

Instead of showing Lemma~\ref{lemma:LR}, we will show the following stronger statement:

\begin{lemma}\label{lemma:a_ib_i}
Let $R\in \W$ be a set and consider the numbers $ \alpha< \beta\leq \gamma< \delta\leq \alpha$ given by Lemma~\ref{lemma:characterizationI}. Then for each $1\leq i\leq 2k$, there exist two numbers $b_i\in [ \alpha, \beta)$ and $a_i\in (\gamma, \delta]$ such that the (combinatorial) chord through $a_i$ and $b_i$ separates $R\setminus S_{i}$ from $S_i\setminus R$ and the same chord separates $R\setminus S_{i-1}$ from $S_{i-1}\setminus R$. More specifically, $a_i$ and $b_i$ satisfy
\begin{enumerate}
 \item \label{item1}$R\setminus S_{i}\subset [a_i,b_i]$;
 \item $R\setminus S_{i-1}\subset [a_i,b_i]$;
 \item $S_i\setminus R\subset (b_i,a_i);$
 \item \label{item4}$S_{i-1}\setminus R\subset (b_i,a_i).$
\end{enumerate}
\end{lemma}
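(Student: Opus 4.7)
My plan is to combine the shape of $R$ given by Lemma~\ref{lemma:characterizationI} with the nested chains produced by Lemma~\ref{lemma:LR} and the single-element mutation $S_{i-1}\to S_i$ built into the definition of $\S$.

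First, I apply Lemma~\ref{lemma:characterizationI} to $R$ to obtain the cyclically ordered points $\alpha,\beta,\gamma,\delta$. Every $S_j$ satisfies $(I\cap J)\subset S_j\subset (I\cup J)$ by condition \ref{item:predictable} of Lemma~\ref{lemma:LR}, so combining this with the four structural properties of $R$ gives $S_j\subset I\cup J\subset R$ on $[\delta,\alpha]$ and $R\subset I\cap J\subset S_j$ on $[\beta,\gamma]$. Hence $R\setminus S_j$ sits inside the arc running from $\gamma$ through $\delta,\alpha$ to $\beta$, and $S_j\setminus R$ sits inside the arc running from $\alpha$ through $\beta,\gamma$ to $\delta$, for both $j=i-1$ and $j=i$. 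Moreover, by Lemma~\ref{lemma:characterizationI}, the overlap arcs $(\alpha,\beta)$ and $(\gamma,\delta)$ each meet $I\bigtriangleup J$ inside a single interval $P_r$ and $P_l$ respectively, so the only elements of $I\bigtriangleup J$ that can actually contribute to the symmetric differences in these overlap arcs lie in two linearly ordered segments.

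Second, conditions (\ref{item1})--(\ref{item4}) on the chord reduce to choosing a splitting point $b_i\in[\alpha,\beta)$ inside $\widehat{P_r}$ and $a_i\in(\gamma,\delta]$ inside $\widehat{P_l}$ such that within each linearly ordered segment every element of $(R\setminus S_i)\cup(R\setminus S_{i-1})$ precedes the splitting point while every element of $(S_i\cup S_{i-1})\setminus R$ follows it. Lemma~\ref{lemma:LR} already furnishes weak separation of $R$ from each of $S_i$ and $S_{i-1}$ separately, so individual splitting points exist for each set; the real task is aligning them into common $a_i,b_i$.

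The alignment will come from the explicit description of the mutation $S_{i-1}\to S_i$: by the definition of $\S$ in Section~\ref{section:proof}, $S_i$ is obtained from $S_{i-1}$ by removing a single element $x_i\in\widehat{P_{i'}}$ and inserting a single element $y_i\in\widehat{P_{j'}}$, with $i'\neq j'$ and $x_i,y_i\in I\bigtriangleup J$. Consequently at most one of $\widehat{P_r},\widehat{P_l}$ is touched by the swap, and when it is, it is touched by exactly one element; hence $S_i\cap \widehat{P_r}$ and $S_{i-1}\cap \widehat{P_r}$ are nested, and similarly for $\widehat{P_l}$. I will take $b_i$ to be the maximal element of $[\alpha,\beta)\cap \widehat{P_r}$ that weakly separates $R$ from the larger of $S_i\cap\widehat{P_r}$ and $S_{i-1}\cap\widehat{P_r}$, and dually for $a_i$; the nestedness will guarantee that this $b_i$ automatically works for the smaller set as well.

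The principal obstacle is the final alignment step: verifying that such a common $b_i$ indeed exists and lies strictly before $\beta$ (and dually that $a_i$ lies strictly after $\gamma$). The decisive input will be the \emph{unique} breakpoint of $R$ inside $P_r$ coming from the chord-separated chain $\emptyset=\widetilde{S}^r_0\subset\dots\subset \widetilde{S}^r_{p_r}=P_r$ of Lemma~\ref{lemma:LR}, whose existence rests on Lemma~\ref{lemma:chord} and the admissibility of the underlying plabic tiling; this uniqueness prevents the splitting points for $S_i$ and $S_{i-1}$ from drifting apart. The fact that $b_i\neq\beta$ follows because $\beta$ cannot belong to $R\setminus S_i$ (as $R\subset I\cap J\subset S_i$ on $[\beta,\gamma]$), and symmetrically $a_i\neq\gamma$.
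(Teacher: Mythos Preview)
Your proposal has a circularity problem. You write that ``Lemma~\ref{lemma:LR} already furnishes weak separation of $R$ from each of $S_i$ and $S_{i-1}$ separately,'' and then plan to align the two resulting chords. But look at how the paper organises the argument: Lemma~\ref{lemma:a_ib_i} is introduced with the sentence ``Instead of showing Lemma~\ref{lemma:LR}, we will show the following stronger statement.'' So Lemma~\ref{lemma:a_ib_i} is precisely what \emph{establishes} the weak-separation clause of Lemma~\ref{lemma:LR}; at this point in the proof the only thing actually available to you is the chord separation of $\phi_r(R)$ from the sets $\widehat{S}^r_t$, $1\cup\widehat{S}^r_t$, $\widehat{S}^r_t\cup\widehat p_r$, $1\cup\widehat{S}^r_t\cup\widehat p_r$ coming from Lemmas~\ref{lemma:chord} and~\ref{lemma:subsets_8}. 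You cannot assume that $R$ and $S_i$ are weakly separated as sets of size $m$ in $[n]$.

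Even setting the circularity aside, your ``unique breakpoint'' step is where the real content lies, and chord separation by itself does not hand it to you. The paper's argument proceeds differently and does not attempt to produce individual chords first and merge them. Instead it \emph{defines} $[a_i,b_i]$ directly as a maximal cyclic interval (with $a_i\in(\gamma,\delta]$, $b_i\in[\alpha,\beta)$) on which $(S_i\cup S_{i-1})\subset R$; such an interval exists because $[\delta,\alpha]$ already works. The substantive claim is then that $R\subset(S_i\cap S_{i-1})$ on the complementary arc $(b_i,a_i)$, proved by contradiction: if some $c\in(b_i,a_i)$ lies in $R\setminus(S_i\cap S_{i-1})$, maximality produces a $d$ with $b_i<d<c$, $d\in(S_i\cup S_{i-1})\setminus R$; one then plays $\widehat d$ and $\widehat c$ off against the four chord-separated sets $\widehat{S}^r_t$, $1\cup\widehat{S}^r_t$, $\widehat{S}^r_t\cup\widehat p_r$, $1\cup\widehat{S}^r_t\cup\widehat p_r$ to force $R\subset I\cup J$ globally and $\operatorname{proj}R$ to be supported on two adjacent intervals $P_{r-1}\cup P_r$. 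This last step contradicts balancedness. Note that balancedness is used a second time here, beyond its appearance in Lemma~\ref{lemma:characterizationI}; your outline never invokes it again, and without it the contradiction does not close. Your observation about the nestedness of $S_i\cap\widehat{P_r}$ and $S_{i-1}\cap\widehat{P_r}$ is correct but is absorbed automatically by the paper's maximality definition, which treats $S_i\cup S_{i-1}$ as a single object from the start.
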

\begin{proof}
Note that we do not know yet that $R$ and $S_i$ are weakly separated, as this is the result of Lemma~\ref{lemma:LR}. Of course, if we separate $R\setminus S_i$ from $S_i\setminus R$ by a chord, we can immediately deduce Lemma~\ref{lemma:LR} from this because their sizes coincide.

Let $R\in \A_{I,J}$. We define $a_i$ and $b_i$ as follows: the interval $[a_i,b_i]$ is any maximal (by inclusion) interval satisfying
\begin{equation}\label{eq:aibi}
\left(S_i\cup S_{i-1}\right)\subset R\quad \text{ on $[a_i,b_i]$}
\end{equation}
subject to the conditions
\[b_i\in [ \alpha, \beta),\quad a_i\in (\gamma, \delta].\]

Note that there exists at least one interval $[a_i,b_i]$ satisfying (\ref{eq:aibi}), namely, $[\delta, \alpha]$ on which by Lemma~\ref{lemma:characterizationI} we have $I\cup J\subset R$ (recall that $S_i$ and $S_{i-1}$ satisfy condition~\ref{item:predictable} by definition).

We are going to show that for thus defined $a_i,b_i$ we also get
\[R\subset (S_i\cap S_{i-1})\quad \text{ on $(b_i,a_i)$}.\]

Suppose this is false:
there is some element $c\in R\cap (b_i,a_i)$ such that $c\not\in S_i\cap S_{i-1}$. There are two symmetric options: $c\in [\alpha, \beta)$ and $c\in ( \gamma, \delta]$. Without loss of generality assume $c\in [\alpha, \beta)$. Then by maximality of $[a_i,b_i]$ there exists an element $d$ satisfying $b_i<d<c$ such that $d\in S_i\cup S_{i-1}$ but $d\not\in R$.

By Lemma~\ref{lemma:characterizationI}, the set $\proj[\alpha,\beta)$ is contained in $P_r\subset [2k]$ which is the right endpoint of $R$. Recall the notation $\Phat{r}=[\ehat_r,\hhat_r]$ from the beginning of this section. We have that $b_i<d<c$ all belong to $\Phat{r}$ so let us apply $\phi_r$ to them to get elements $\bhat_i<\dhat<\chat$ in $[\phat_r]$. We know that $c\in [\alpha,\beta)$ and $\proj[\alpha,\beta)\subset P_r$ which actually implies that $1<\bhat_i<\dhat<\chat<\phat_r$. Let $\Rhat:=\phi_r(R)$. Now, we know that $d\in S_i\cup S_{i-1}$ but $d\not\in R$. We will assume that $d\in S_i$ for simplicity, the case $d\in S_{i-1}$ is completely analogous.

\newcommand\Shatr[1]{\widehat{S_#1^r}}
By the definition of $S_i$, there is an index $t\in [0,p_r]$ such that
\[(\phi_r(S_i)\cap [2,\phat_r-1])=\Shatr{t}.\]
Hence we have $d\in \Shatr{t}\setminus \Rhat$ and $c\in \Rhat\setminus \Shatr{t}$, and we know that $\Rhat$ is chord separated from
\[\Shatr{t},1\cup \Shatr{t},\Shatr{t}\cup\phat_r,1\cup\Shatr{t}\cup\phat_r.\]
Therefore we have either $1\not\in\Rhat$ or $\phat_r\in\Rhat$, because otherwise if $1\in\Rhat$ and $\phat_r\not\in\Rhat$, $\Rhat$ would not be chord separated from $\Shatr{t}\cup\phat_r$. So suppose $1\not\in\Rhat$. Then in order for $1\cup\Shatr{t}$ and $\Rhat$ to be chord separated, we must have $2,\dots,d-1\not\in\Rhat\setminus \Shat{r}$.
Applying the inverse of $\phi$ implies that the integers
\[\ehat_r<\alpha\leq d<c<\beta\leq \hhat_r\]
are cyclically ordered and
\[\ehat_r, d\not\in R,\quad c,\hhat_r\in R.\]
Moreover, on $[\ehat_r,d]$ we must have $R\subset I\cup J$ in order for $2,\dots,d-1\not\in\Rhat\setminus \Shat{r}$. Note that $\ehat_r\in I\cup J$ by definition. The fact that $\ehat_r\not\in\Rhat$ means that $\ehat_r\not\in [\delta,\alpha]$ where $I\cup J\subset R$. Therefore $[\delta,\alpha]\subset (\ehat_r,d]$. It follows that on $[\delta,\alpha]$ we have $R=I\cup J$. Thus the set $R$ is contained in $I\cup J$ (on $[n]$) and hence $\proj R$ contains at least $k$ elements. Since $\delta\in (\ehat_r,d]$, the image $\proj(\gamma,\delta)$ belongs to either $P_r$ or $P_{r-1}$. The conclusion is, the images of $(\gamma,\delta),[\delta,\alpha],$ and $(\alpha,\beta)$ under $\proj$ are all contained in two intervals $P_r,P_{r-1}$ associated with $\It$ and $\Jt$. But on the remaining interval $[\beta,\gamma]$ we have $R\subset I\cap J$ so $\proj R$ contains no elements on the complement of $P_r\cup P_{r-1}$. We get a contradiction since $\proj R$ has at least $k$ elements but is supported on just two intervals associated with $\It,\Jt$ which form a balanced pair. We are done with the case $1\not\in\Rhat$. The case $\phat_r\in\Rhat$ follows by replacing each set with its complement. We are done with Lemma~\ref{lemma:a_ib_i} and thus with Lemma~\ref{lemma:LR}.
\end{proof}

\subsection*{Proof of Lemma~\ref{lemma:iso}}

Define $0=q_0,q_1,\dots,q_{2u}=2k$ to be the partial sums of $p_i$'s:
\[q_i:=p_1+p_2+\ldots+p_i.\]
Then consider the permutation $\gamma=(\gamma(1),\gamma(2),\dots,\gamma(2k))$ defined in such a way that for every $1\leq i\leq 2u$ and for every $0\leq t\leq p_i$ we have
\[\St_t^i=\{\gamma(q_i+1),\gamma(q_i+2)\dots,\gamma(q_i+t)\}.\]
Note that our pattern $\proj\S$ clearly satisfies Definition~\ref{dfn:gr_like_necklace} since $\act(\proj\S)=[2k]$. Let $\sigma$ and $\pi$ be the permutations of $[n]$ that we assigned to $\proj\S$ in Section~\ref{sect:iso}. Then
\[\sigma=\left(\gamma(q_1),\gamma(q_1-1),\dots,\gamma(1),\gamma(q_2),\dots,\gamma(q_1+1),\dots,\gamma(q_{2u}),\dots,\gamma(q_{2u-1}+1)\right)\]
in one-line notation. In other words, $\sigma=\gamma\circ\tau_\It$. Similarly, since $\proj S_1=P_1\cup\dots\cup P_i\cup \St_t^i$ for some $i$ and $t$ and $|\proj S_1|=k$, we have $\proj S_1=\{\gamma(1),\dots,\gamma(k)\}$ and hence
\[\pi=\left(\gamma(k+1),\gamma(k+2),\dots,\gamma(2k),\gamma(1),\gamma(2),\dots,\gamma(k)\right)=\tau_{k,2k}.\]
Because $\tau_\It$ and $\tau_{k,2k}$ are involutions, it follows that
\[\sp=\tau_\It\circ\gamma^{-1}\circ\gamma\circ\tau_{k,2k}=\tau_\It\circ\tau_{k,2k},\]
and we are done by Theorem~\ref{thm:iso}. \qed

\def\RR{{\mathbb{R}}}
\def\SSS{{\proj(\S)}}
\subsection*{Proof of Lemma~\ref{lemma:outside}}
We need to prove $\W\subset\D_\S^\outt$ and $\D_\S^\outt\subset\A_{I,J}$. We start with the second inclusion. To show it, we use Theorem~\ref{thm:purity_of_generalizedcyclic} which states that every element of $\D_\S^\inn$ is weakly separated from every element of $\D_\S^\outt$. Therefore if we show that $I,J\in\D_\S^\inn$, then every element of $\D_\S^\outt$ will be weakly separated from $I$ and $J$ and thus belong to $\A_{I,J}$. But since every element $S$ of $\S$ satisfies condition~\ref{item:predictable}:
\[\left(I\cap J\right)\subset S\subset \left(I\cup J\right),\]
we get (using the geometric definition of $\D_\S^\inn$) that $I$ and $J$ belong to $\D_\S^\inn$ iff $\It$ and $\Jt$ belong to $\D_\SSS^\inn$ where $\SSS=(\St_1,\dots,\St_{2k})$ is the image of $\S$ under $\proj$. The fact that $\It$ and $\Jt$ belong to $\D_\SSS^\inn$ is easy to see using Remark~\ref{rmk:positroid}. This shows that $\D_\S^\outt\subset\A_{I,J}$.

Next, we want to show the first inclusion $\W\subset\D_\S^\outt$. In other words, for any $R\in \W$ we need to show that the point in $\RR^2$ corresponding to $R$ lies outside the simple closed curve $\zeta_\S$ corresponding to $\S$. Note that $\S$ has to satisfy some additional properties from Theorem~\ref{thm:purity_of_generalizedcyclic} in order for $\zeta_\S$ to be non self-intersecting. These properties are:
\begin{enumerate}
 \item $\S$ contains no quadruple $S_{p-1},S_p,S_{q-1},S_q$ such that $\{S_{p-1},S_p\}=\{Xi,Xk\}$ and $\{S_{q-1},S_q\}=\{Xj,Xl\}$, where $i<j<k<l$;
 \item $\S$ contains no quadruple $S_{p-1},S_p,S_{q-1},S_q$ such that $\{S_{p-1},S_p\}=\{X \setminus i,X \setminus k\}$ and $\{S_{q-1},S_q\}=\{X \setminus j,X \setminus l\}$, where $i<j<k<l$.
\end{enumerate}
It is clear that $\S$ satisfies both of them, because for each $X$ of size $k-1$ (resp., $k+1$), there are at most two elements $S_p,S_q$ in $\S$ that contain (resp., are contained in) $X$.

The question of whether $R$ lies inside or outside of $\zeta_\S$ is not immediately clear because $\zeta_\S$ is in general not a convex polygon, so it can happen that there is no line in $\RR^2$ separating $R$ from $\zeta_\S$. We now give a simple sufficient condition for a point in $\RR^2$ to be outside of a polygonal closed curve in $\RR^2$.

\begin{lemma}\label{lemma:geometry}
 Let $\<\cdot,\cdot\>$ be the usual inner product in $\RR^2$ and let $v\in\RR^2$ be a point. Consider a closed polygonal chain $L=(u_0,u_1,\dots,u_r=u_0)$. Assume that there are vectors $w_i,1\leq i\leq r$ such that $\<v,w_i\> \geq \<u_i,w_i\>$ and $\<v,w_i\> \geq \<u_{i-1},w_i\>$ for all $1\leq i\leq r$. Suppose in addition that all these vectors belong to the same half-plane, i.e. there is a single vector $w$ such that $\<w,w_i\>>0$ for all $1\leq i\leq r$. Then either $v\in L$ or $v$ is outside of the region surrounded by $L$.
\end{lemma}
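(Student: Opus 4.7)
The plan is a standard ray-casting argument: from $v$, emit a ray in the direction of $w$ and show that this ray does not meet $L$, which will place $v$ in the unbounded connected component of $\RR^2\setminus L$.

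First I would fix an edge $[u_{i-1},u_i]$ and a real number $t\geq 0$. Writing an arbitrary point on that edge as $p=(1-s)u_{i-1}+su_i$ with $s\in[0,1]$, and taking the convex combination of the two hypothesized inequalities
\[\<v,w_i\>\geq \<u_{i-1},w_i\>,\qquad \<v,w_i\>\geq \<u_i,w_i\>,\]
one obtains $\<p,w_i\>\leq \<v,w_i\>$ for every $p$ on the edge. Combined with $\<w,w_i\>>0$ this gives
\[\<v+tw,w_i\>=\<v,w_i\>+t\<w,w_i\>\;\geq\;\<p,w_i\>,\]
with equality if and only if simultaneously $t=0$ and $\<v,w_i\>=\<p,w_i\>$. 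In particular, an identity $v+tw=p$ with $p$ on some edge of $L$ forces $t=0$, and hence $v=p\in L$.

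Consequently, if $v\notin L$ then the entire ray $\{v+tw:t\geq 0\}$ is disjoint from $L$. This ray escapes to infinity, so $v$ and a point at infinity lie in the same connected component of $\RR^2\setminus L$. Since in the intended application $L=\zeta_\S$ is a simple closed polygonal curve (guaranteed by conditions (\ref{item:non_self_int_1}) and (\ref{item:non_self_int_2}) of Theorem~\ref{thm:purity_of_generalizedcyclic}, already verified for $\S$), the Jordan curve theorem ensures that $\RR^2\setminus L$ has exactly one unbounded component, and $v$ lies in it, i.e., $v$ is outside the region surrounded by $L$.

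The argument has no real obstacle; the only point requiring care is the convexity step that extends the two endpoint inequalities to every point of the edge, together with the implicit use of a simple closed curve dividing the plane into inside and outside. Both are routine once the inner-product inequality above has been established, and the crucial role of the common half-plane hypothesis is exactly to guarantee $\<w,w_i\>>0$ uniformly in $i$, which is what prevents the ray in direction $w$ from ever re-entering $L$.
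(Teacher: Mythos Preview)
Your proof is correct and is essentially identical to the paper's own argument: both cast the ray $\{v+tw:t\geq 0\}$ from $v$, pair an intersection point on an edge $[u_{i-1},u_i]$ with the vector $w_i$, and use the convex-combination inequality together with $\<w,w_i\>>0$ to force $t=0$. The only cosmetic difference is that you spell out the Jordan-curve step and the role of the simple-curve hypothesis explicitly, whereas the paper leaves these implicit.
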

\begin{proof}[Proof of Lemma~\ref{lemma:geometry}]
 Consider a ray $R:=\{v+tw\}_{t\geq 0}$ starting at $v$. It is sufficient to show that $R$ does not intersect $L$. Equivalently, $R$ does not intersect each edge $[u_{i-1},u_i]$ for $1\leq i\leq r$. Suppose that happened for some $i$. Then there exist real numbers $s\in [0,1],t\in [0,+\infty)$ such that
 \[su_{i-1}+(1-s)u_i= v+tw.\]
 After we take the inner product of both sides with $w_i$, we get
 \[s\<u_{i-1},w_i\>+(1-s)\<u_i,w_i\>=\<v,w_i\>+t\<w,w_i\>.\]
 On the other hand, by the properties of $w_i$
 \[s\<u_{i-1},w_i\>+(1-s)\<u_i,w_i\>\leq s\<v,w_i\> +(1-s)\<v,w_i\>=\<v,w_i\>\leq \<v,w_i\>+t\<w,w_i\>.\]
 This implies that all the inequalities are actually equalities meaning that $t=0$ so $v\in L$.
\end{proof}

We would like to use now Lemma~\ref{lemma:geometry} to prove Lemma~\ref{lemma:outside}. To do that, we are going to use the geometric definition of $\D^\inn_\S$. We need to construct vectors $w_i,1\leq i\leq 2k$, that satisfy the requirements of Lemma~\ref{lemma:geometry}. We do this with the help of Lemma~\ref{lemma:a_ib_i}. Namely, number the elements of $I\bigtriangleup J$ by $(t_1,t_2,\dots,t_{2k})$ in cyclic order. Consider a convex $n$-gon with vertices $\xi_1,\dots,\xi_n\in\RR^2$ such that the vectors $(\xi_{t_i})_{i=1}^{2k}$ would be the vertices of a \emph{regular} $2k$-gon.

The edges of the curve $\zeta_\S$ are exactly line segments connecting $S_{i-1}$ to $S_i$ for some $i$. For each such edge we have a vector $\xi_{a_i}-\xi_{b_i}$ from Lemma~\ref{lemma:a_ib_i}. Let $w_i$ be a unit vector that is orthogonal to $\xi_{a_i}-\xi_{b_i}$ so that the orientation of $(\xi_{a_i}-\xi_{b_i},w_i)$ would be positive. Let $w$ be the vector orthogonal to
\[\xi_ \alpha+\xi_ \beta-\xi_ \gamma-\xi_ \delta.\]
The fact that the inner product $\<w_i,w\>$ is always positive for all $i\in [2k]$ follows from the following observations:
\begin{itemize}
 \item $I,J$ form a balanced pair;
 \item $a_i\in (\gamma, \delta]$ and $b_i\in [\alpha, \beta)$;
 \item each of $\proj(\gamma, \delta]$ and $\proj[\alpha, \beta)$ is contained in a single interval associated with $\It$;
 \item the union $(\gamma, \delta]\cup [\alpha, \beta)$ contains less than $k$ elements of $\{t_1,t_2,\dots,t_{2k}\}$.
\end{itemize}
Indeed, the last claim together with the way we arranged the vertices of our $n$-gon ensures that the angle between $w$ and $w_i$ is strictly less than $\pi/2$.

Finally, since the numbers $a_i,b_i$ satisfy properties~(\ref{item1})-(\ref{item4}) in the statement of Lemma~\ref{lemma:a_ib_i}, we get that the vectors $w_i$ satisfy the requirements of Lemma~\ref{lemma:geometry}. This finishes the proof of Lemma~\ref{lemma:outside} and hence we are also done with the proof of Theorem~\ref{thm:main_nc} (which implies Theorem~\ref{thm:main}). \qed

\section{The unbalanced case}\label{section:unbalanced}

In the previous sections, we found the exact value of $d(\cdot,\cdot)$ for balanced pairs. In this section, we concentrate on the unbalanced case. As we mentioned in Remark~\ref{unbalancedremark}, the purity phenomenon does not necessarily hold. However, we can still provide a lower bound for the size of the maximal weakly separated collections inside the domain, and as a result, to obtain an upper bound on $d(\cdot,\cdot)$ for the unbalanced case. This bound is described in Theorem~\ref{thm:lowerboununbalanced}.

In addition, in Theorem~\ref{thm:twointervals}, we evaluate the exact value of $d(\cdot,\cdot)$ for a wide family of unbalanced pairs. Recall that for an even positive integer $t$, $\P_t$ is the collection of all sets $I$ for which  $I$ and $\II$ partition the circle into $t$ intervals. In this section we concentrate on the structure of $\P_4$. This case is particularly interesting, since $t=4$ is the minimal $t$ for which $I, \II \in \P_t$ are not weakly separated. While sets $I \in \P_4$ can never be balanced, we were still able to find the exact value of $d(I,\II)$ as well as the maximal possible cardinality of a weakly separated collection in $\A_{I,\II}$. Moreover, we also find the value of $D(I,\II)$, the \textit{mutation distance} between $I$ and $\II$ introduced in \cite{FP}. The following definition and problem were introduced in \cite{FP}.
\begin{dfn}
Let $I,J\in {[n]\choose k}$ be any two $k$-element subsets in $[n]$.
Define the {\it mutation distance\/} $D(I,J)$ to be the minimal number of mutations needed
to transform a maximal weakly separated collection $\C_1 \subset {[n]\choose k}$ that contains $I$ into a maximal weakly separated collection $\C_2 \in {[n]\choose k}$ that
contains $J$.
\end{dfn}
\begin{problem}
How to calculate the mutation distance between any  $I$ and $J$, and how to
find the shortest chain of mutations between maximal weakly separated collections containing these subsets?
\end{problem}
Clearly, $D(I,J)=0$ iff $I$ and $J$ are weakly separated. In addition, $d(I,J)\leq D(I,J)$ for any pair $I,J \in {[n]\choose k}$.
Note that in the definition above, we consider only mutations that are square moves (that were discussed in previous sections). There are more general types of mutations, which are beyond the scope of this paper.

\begin{thm}
\label{thm:lowerboununbalanced}
Let $A\in\ch$ and let $(p_1,p_2,\dots,p_{2u})$ be the partition of the circle associated with $A$.
Define $$a_i:=p_{2i-1}, b_i:=p_{2i} \textrm{ for } 1 \leq i \leq 2u,$$ and
let $$\chi_{i,j} = \begin{cases} 0 &\mbox{if } i \in \{j, j-1\} \mbox{ or if } a_i+b_j<k\\ 1 &\mbox{otherwise}\end{cases}.$$
Then a maximal (by size) weakly separated collection in  $\A_{A,\AA}$ is of size at least
\begin{equation}\label{eq:unbalanced}
2k+\sum_{i=1}^u {a_i \choose 2}+\sum_{i=1}^u {b_i \choose 2}+\sum_{1\leq i<j \leq s}(a_i+b_j-k+1)\chi_{i,j}.
\end{equation}
\end{thm}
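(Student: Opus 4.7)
My plan is to exhibit an explicit weakly separated collection $\W\subseteq\A_{A,\AA}$ of size equal to the right-hand side of (\ref{eq:unbalanced}); any maximal-by-size weakly separated collection in $\A_{A,\AA}$ will then be at least this large. I would construct $\W$ in two stages. First, I build a base collection $\W_0$ of size $2k+\sum_{i=1}^{2u}\binom{p_i}{2} = 2k+\sum_{i=1}^{u}\binom{a_i}{2}+\sum_{i=1}^{u}\binom{b_i}{2}$ by imitating the construction of Theorem~\ref{thm:main_nc}: although that proof used balancedness to identify $\A_{A,\AA}$ with $\D_\S^\outt$, one can still form the chains $\emptyset=\St^i_0\subset\cdots\subset\St^i_{p_i}=\Pt_i$, assemble them into the generalized cyclic pattern $\S$, and take $\W_0\subseteq\D_\S^\outt$ to be a maximal weakly separated collection inside that domain. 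The rank computation $|\W_0|=2k+\sum_{i=1}^{2u}\binom{p_i}{2}$ uses only Lemma~\ref{lemma:iso} together with Theorem~\ref{thm:purity_of_generalizedcyclic}, both of which survive in the unbalanced setting; the inclusion $\D_\S^\outt\subseteq\A_{A,\AA}$ is automatic because $\S\subseteq\A_{A,\AA}$ and elements of $\D_\S^\outt$ are weakly separated from $\D_\S^\inn\ni A,\AA$.

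To account for the third summand $\sum_{i<j}(a_i+b_j-k+1)\chi_{i,j}$, for each pair $(i,j)$ with $1\le i<j\le u$ and $\chi_{i,j}=1$, I define a family $\mathcal{F}_{i,j}$ consisting of the $k$-sets
\[
R_\ell \;=\; T_\ell \,\cup\, (P_{2i+1}\cup P_{2i+3}\cup\cdots\cup P_{2j-1}) \,\cup\, U_\ell,
\]
where $T_\ell$ is a length-$\ell$ suffix of $P_{2i-1}$ and $U_\ell$ is a prefix of $P_{2j}$ of complementary length chosen so that $|R_\ell|=k$. The hypothesis $a_i+b_j\geq k$ forces $\ell$ to range over exactly $a_i+b_j-k+1$ valid integers; the exclusion $i\notin\{j,j-1\}$ encoded in $\chi_{i,j}$ ensures that $P_{2i-1}$ and $P_{2j}$ are separated by at least one odd-indexed interval, making the configuration non-degenerate and the $R_\ell$ genuinely distinct from the sets in $\W_0$. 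By Lemma~\ref{lemma:characterizationI}, each $R_\ell$ lies in $\A_{A,\AA}$: one places the four-tuple $\alpha<\beta\leq\gamma<\delta$ with $(\gamma,\delta)$ on the cut inside $P_{2i-1}$ and $(\alpha,\beta)$ on the cut inside $P_{2j}$, so that properties (a)-(d) of that lemma hold.

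Finally, I would verify that $\W_0\cup\bigcup_{(i,j):\chi_{i,j}=1}\mathcal{F}_{i,j}$ is weakly separated. Within a single $\mathcal{F}_{i,j}$, consecutive $R_\ell$ differ by shifting one element along a single chord of the $n$-gon, so the family is pairwise weakly separated. Between two distinct families $\mathcal{F}_{i,j}$ and $\mathcal{F}_{i',j'}$, pairwise weak separation is checked by a case analysis on the cyclic arrangement of $P_{2i-1},P_{2j},P_{2i'-1},P_{2j'}$, where the asymmetric restriction $i<j$ enforces a globally consistent orientation of the cuts and the exclusion $i\neq j-1$ prevents the two chords from crossing. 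Weak separation of each $R_\ell$ with each $T\in\W_0$ follows from a half-plane argument analogous to Lemma~\ref{lemma:geometry}, exploiting that the cut across $(P_{2i-1},P_{2j})$ places $R_\ell$ outside the curve $\zeta_\S$ in a region made accessible precisely by the extra room $a_i+b_j\geq k$. The principal obstacle will be this Stage-three verification, especially the between-families case, where the conditions $i<j$ and $i\neq j-1$ encoded in $\chi_{i,j}$ must be invoked essentially to rule out forbidden crossings; making this bookkeeping uniform across all unbalanced pairs is the delicate combinatorial step.
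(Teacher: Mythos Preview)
Your overall strategy---write down an explicit weakly separated subcollection of $\A_{A,\AA}$ with the right size---matches the paper's, but your implementation diverges from it and contains real gaps.

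\medskip

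\textbf{The paper's construction is fully explicit and avoids the machinery of $\S$.} The paper simply takes $\C=\bigl(\bigcup_i B_i\bigr)\cup\bigl(\bigcup_i T_i\bigr)\cup\bigl(\bigcup_{i,j} M_{ij}\bigr)$, where the $B_i$ are the $2k$ boundary intervals, each $T_i$ consists of $\binom{p_i}{2}$ two-interval sets with a short gap inside $P_i$, and each $M_{ij}$ (for $i<j$, $i\neq j-1$) consists of the sets of the form $[x,s+a_i-1]\cup[y,t+b_j-1]$, i.e.\ a suffix of $P_{2i-1}$ union a suffix of $P_{2j}$ of total size $k$. The count $|M_{ij}|=(a_i+b_j-k+1)\chi_{i,j}$ is then immediate, and weak separation of the whole collection is a short case check using the key observation that if some $M_{ij}$ is nonempty (so $a_i+b_j\ge k$), then every other nonempty $M_{uv}$ must share $u=i$ or $v=j$, since $\sum a_m=\sum b_m=k$.

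\medskip

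\textbf{Two concrete problems with your version.} First, your family $\mathcal{F}_{i,j}$ does not have cardinality $a_i+b_j-k+1$. You take $R_\ell=T_\ell\cup(P_{2i+1}\cup\cdots\cup P_{2j-1})\cup U_\ell$ with $|T_\ell|=\ell$ and $|U_\ell|$ chosen so that $|R_\ell|=k$; this forces $\ell+|U_\ell|=k-\sum_{m=i+1}^{j}a_m$, so the number of valid $\ell$ is governed by $a_i+b_j+\sum_{m=i+1}^{j}a_m-k$, not $a_i+b_j-k$. The construction you want is the paper's two-interval one, where the internal odd intervals are \emph{absent}; only then does $a_i+b_j\ge k$ directly translate into the correct count.

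Second, your Stage~3 argument does not establish weak separation of the $R_\ell$ with $\W_0$. You say a half-plane argument ``analogous to Lemma~\ref{lemma:geometry}'' places $R_\ell$ outside $\zeta_\S$, but Lemma~\ref{lemma:geometry} is a statement about lying in $\D_\S^\outt$, and two arbitrary elements of $\D_\S^\outt$ need not be weakly separated from each other. Since your $\W_0$ is an \emph{unspecified} maximal weakly separated collection in $\D_\S^\outt$, you cannot check separation against it element by element. (If instead you showed $R_\ell\in\D_\S^\inn$, Theorem~\ref{thm:purity_of_generalizedcyclic} would give the desired separation from all of $\W_0$---but you assert the opposite.) The paper avoids this difficulty entirely by keeping everything explicit.
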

Note that by subtracting (\ref{eq:unbalanced}) from $1+k^2$ we get an upper bound on $d(A,\AA)$.
\begin{thm}
\label{thm:twointervals}
Let $A\in\ch$ such that $(p_1,p_2,p_3,p_4)$ is the partition of the circle associated with $A$ (so $u=2$).
Then the following three statements hold:
\begin{enumerate}
  \item A maximal weakly separated collection in $\A_{A,\AA}$ is of size at most
$2k+\sum_{i=1}^{4} {p_i\choose 2}$, and this bound is tight.
  \item $d(A,\AA)=1+k^2-2k-\sum_{i=1}^{2r} {p_i\choose 2};$
  \item Assume without loss of generality that $p_1 \geq \max(p_2,p_3,p_4)$ (we can always achieve that by rotation and switching the roles of $A$ and $\AA$). Then $D(I,\II)=p_2p_3p_4-2{p_3+1 \choose 3}.$
\end{enumerate}
\end{thm}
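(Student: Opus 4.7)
My plan is to obtain both bounds simultaneously. The lower bound in (1) is immediate from Theorem~\ref{thm:lowerboununbalanced}: when $u=2$ the only candidate pair $(i,j)$ with $1\leq i<j\leq 2$ satisfies $i=j-1$, so $\chi_{1,2}=0$ and the correction sum is empty, leaving exactly $2k+\sum_{i=1}^{4}\binom{p_i}{2}$. For the matching upper bound I would follow the scheme of the proof of Theorem~\ref{thm:main_nc}. Although $A$ is not balanced when $u=2$, the obstruction to Lemma~\ref{lemma:characterizationI} is controlled: the only ``extra'' elements of $\A_{A,\AA}$ are those sets $R$ whose chord separators $c_A,c_{\AA}$ fall into the two forbidden configurations from Figure~\ref{fig:arrows} (the bottom row), and these force $\proj(R)$ to consist entirely of two opposite intervals $\Pt_i,\Pt_j$ with $p_i+p_j\geq k$. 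I would enumerate this finite family of exceptional elements, show they cannot enlarge a maximal weakly separated sub-collection beyond the stated bound, and then run the generalized cyclic pattern construction from Section~\ref{section:proof} together with Theorem~\ref{thm:purity_of_generalizedcyclic}. Part (2) then follows from (1) by subtracting from $k^2+1=\rk\binom{[2k]}{k}$ as in equation~\eqref{eqd2}.

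\textbf{Part (3), upper bound.} Here my plan is to exhibit an explicit mutation sequence realizing the value $p_2p_3p_4-2\binom{p_3+1}{3}$. Using the linear projection alluded to in the abstract, I would identify the Pl\"ucker coordinates appearing along a mutation path from a cluster containing $A$ to a cluster containing $\AA$ with values of the octahedron recurrence on lattice points of the box $[0,p_2]\times[0,p_3]\times[0,p_4]$, the sets $A$ and $\AA$ labelling two opposite corners. Each interior lattice point corresponds to one square move; the two tetrahedral corners visible from the $A$-corner and from the $\AA$-corner are already realised as frozen boundary data (this is where the hypothesis $p_1\geq \max(p_2,p_3,p_4)$ enters, as it ensures $p_3=\min(p_1,p_3)$ is the ``thin'' direction and hence the two tetrahedra of cardinality $\binom{p_3+1}{3}$ fit inside the box). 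Subtracting these $2\binom{p_3+1}{3}$ already-known values from the $p_2p_3p_4$ total yields the desired upper bound.

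\textbf{Part (3), lower bound.} This is the serious step. The trivial bound $D(A,\AA)\geq d(A,\AA)$ from part~(2) grows only quadratically in the $p_i$'s whereas the target is cubic, so a much finer invariant is required. My strategy is to assign to each maximal weakly separated collection $\C\subset\binom{[2k]}{k}$ a vector-valued statistic indexed by layers of the octahedron-recurrence box, counting how many cluster variables of $\C$ project to each layer on the ``$\AA$-side.'' I would then show that a single square mutation changes this statistic by exactly one unit in one coordinate, so that the total difference between a cluster containing $A$ and one containing $\AA$, which equals the number of box points outside the two frozen tetrahedra, is a lower bound on the number of mutations needed. \textbf{The main obstacle} is precisely this monotonicity claim: verifying that each square move is ``accounted for'' by exactly one box cell, and conversely that no box cell can be skipped. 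This requires a careful compatibility check between the cluster-algebra structure on $\operatorname{Gr}(k,2k)$ and the octahedron recurrence under the projection, in effect showing that an arbitrary (not necessarily octahedral) mutation sequence cannot outperform the straight box-filling sequence from the upper-bound argument.
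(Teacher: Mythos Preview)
Your overall architecture differs substantially from the paper's, and the difference matters most in the upper bound for~(1).

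\textbf{What the paper actually does.} All three parts are proved via a single device: the projection $\phi=\phi_{p_1,p_2,p_3,p_4}:2^{[2k]}\to\mathbb{R}^4$ recording intersection sizes with the four intervals, together with Lemmas~\ref{lemma:abovepyramid} and~\ref{lem:octahedron}. For~(1) and~(2), given any $\C\ni A$ and $\hat\C\ni\AA$, Lemma~\ref{lemma:abovepyramid} confines $\phi(\C)$ and $\phi(\hat\C)$ relative to two square pyramids $P,Q$ with apices $\phi(A),\phi(\AA)$; one then shows that a specific set $Z$ of integral boundary points of $P$ lying in the interior of $Q$ must satisfy $g(Z)\subset g(\phi(\C))$, where $g$ is the projection along the mutation direction $(-1,1,-1,1)$. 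These $|Z|$ points correspond to elements of $\C\setminus\hat\C$, giving the upper bound on $|\C\cap\hat\C|$ directly (tightness is by the construction of Theorem~\ref{thm:lowerboununbalanced}, as you say). For~(3), Lemma~\ref{lem:octahedron} shows a square move either fixes $\phi(\C)$ or moves one point by $\pm(-1,1,-1,1)$; hence $g(\phi(\C_i))$ is constant along any mutation path, and the lower bound is simply the number of integral interior points of $P\cap Q$, namely $p_2p_3p_4-2\binom{p_3+1}{3}$. The upper bound is an explicit honeycomb construction.

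\textbf{Where your plan diverges and where it is incomplete.} For the upper bound in~(1) you propose to salvage the generalized-cyclic-pattern argument of Section~\ref{section:proof}. This is a genuinely different route, and the paper does not attempt it; instead the projection $\phi$ sidesteps the balancedness hypothesis entirely. Your plan has a real gap: you assert that the exceptional elements (those $R$ falling in the bottom row of Figure~\ref{fig:arrows}) ``cannot enlarge a maximal weakly separated sub-collection beyond the stated bound,'' but you give no mechanism for this, and it is not clear how the construction of $\S$ and the rank computation via Lemma~\ref{lemma:iso} go through once Lemma~\ref{lemma:characterizationI} fails. The paper's geometric argument avoids this problem completely.

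For~(3), your instinct is correct --- one needs an invariant that each mutation changes by at most one --- and your ``layer statistic'' is morally the same object as the paper's fibres of $g\circ\phi$. The obstacle you flag (monotonicity) is exactly what Lemma~\ref{lem:octahedron} resolves: because $g(\phi(\C_i))$ is \emph{constant} along the path and every element above $P$ (in the interior of $Q$) must eventually be mutated away, each interior lattice point of $P\cap Q$ forces at least one square move of the nontrivial type. So your lower-bound strategy is on target but would benefit from being made concrete via the projection $\phi$ rather than an abstract layer count.
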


In order to prove theorem~\ref{thm:twointervals}, we start by presenting a certain projection of weakly separated collections in $\A_{A,\AA}$ into $\mathbb{R}^3$. Given $n>0$ and 4 positive integers $x,y,z,w$ that satisfy $x+y+z+w=n$, we define the projection $\phi_{x,y,z,w}: 2^{[n]} \rightarrow \mathbb{R}^4$ such that
$$\phi_{x,y,z,w}(C)_i = \begin{cases} |C \cap [1,x]| &\mbox{ if } i=1 \\ |C \cap [x+1,x+y]| &\mbox{ if } i=2 \\
|C \cap [x+y+1,x+y+z]| &\mbox{ if } i=3 \\|C \cap [x+y+z+1,x+y+z+w]| &\mbox{ if } i=4 \\ \end{cases}.$$

Let $\C \subset {[n] \choose k}$ be a weakly separated collection. Then $\phi_{x,y,z,w}(\C)$ lies in the intersection of $\mathbb{R}^4$ with the hyperplane $x_1+x_2+x_3+x_4=k$, so $\phi_{x,y,z,w}(\C)$ is, in fact, lying in a space isomorphic to $\mathbb{R}^3$. For simplicity, from now on we denote $\phi_{x,y,z,w}$ by $\phi$, unless we would like to specify the values of $x,y,z,w$. In the next two lemmas we described certain important properties of $\phi$.
\begin{lemma}\label{lemma:abovepyramid}
Let $\C \subset {[n] \choose k}$ be a weakly separated collection and let $J \in \C$. Consider the infinite square pyramid $P$ whose apex is $\phi(J)$ and whose $4$ edge vectors are $$\alpha_1=(0,0,-1,1), \alpha_2=(0,1,-1,0), \alpha_3=(-1,1,0,0), \alpha_4=(-1,0,0,1),$$
(that is, the pyramid consists of the elements $\phi(J)+\sum_{i=1}^4 t_i\alpha_i$ such that $t_i \geq 0$ for all $i$). Then for all $I \in \C$, $\phi(I)$ cannot lie in the interior of $P$. Similarly, $\phi(I)$ cannot lie in the interior of $-P$ (which is defined as $\phi(J)+\sum_{i=1}^4 t_i(-\alpha_i)$).
\end{lemma}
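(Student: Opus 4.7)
The plan is to reduce the geometric condition ``$\phi(I)$ lies in the interior of $P$'' to a clean combinatorial condition on $I$ and $J$, and then to contradict this using the cyclic characterization of weak separation from Remark~\ref{rmk:ws_definition}. Let $Q_1=[1,x]$, $Q_2=[x+1,x+y]$, $Q_3=[x+y+1,x+y+z]$, $Q_4=[x+y+z+1,n]$, and write $d_i:=|I\cap Q_i|-|J\cap Q_i|$, so $\phi(I)-\phi(J)=(d_1,d_2,d_3,d_4)$ with $\sum_i d_i=0$. Since every $\alpha_i$ has coordinate sum $0$ and since $\alpha_1+\alpha_3=\alpha_2+\alpha_4=(-1,1,-1,1)$, the cone $\sum t_i\alpha_i$ is three-dimensional inside the sum-zero hyperplane. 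Solving $\sum t_i\alpha_i=(d_1,d_2,d_3,d_4)$ gives the linear system
\begin{equation*}
d_1=-t_3-t_4,\quad d_2=t_2+t_3,\quad d_3=-t_1-t_2,\quad d_4=t_1+t_4,
\end{equation*}
so any representation with all $t_i>0$ forces $d_1<0$, $d_2>0$, $d_3<0$, $d_4>0$. Conversely, given those strict inequalities (together with $\sum d_i=0$), one parameter $s=t_1$ suffices to produce $t_1,t_2,t_3,t_4>0$ simultaneously, since the required window $\max(0,d_1+d_4)<s<\min(d_4,-d_3)$ is nonempty exactly because $d_1<0$, $-d_2<0$, and $d_4>0>d_3$.

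Hence $\phi(I)$ is in the (relative) interior of $P$ if and only if all four strict inequalities $d_1<0,\ d_2>0,\ d_3<0,\ d_4>0$ hold. In that case, $d_1<0$ yields some $q_1\in(J\setminus I)\cap Q_1$, $d_2>0$ yields $q_2\in(I\setminus J)\cap Q_2$, $d_3<0$ yields $q_3\in(J\setminus I)\cap Q_3$, and $d_4>0$ yields $q_4\in(I\setminus J)\cap Q_4$. Because $Q_1<Q_2<Q_3<Q_4$ as intervals of $[n]$, the four indices satisfy $q_1<q_2<q_3<q_4$, with $q_1,q_3\in J\setminus I$ and $q_2,q_4\in I\setminus J$ alternating. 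By Remark~\ref{rmk:ws_definition}, the convex hulls of $I\setminus J$ and $J\setminus I$ inside the $n$-gon then intersect, so $I$ and $J$ are not weakly separated. This contradicts $I,J\in\C$, and proves that $\phi(I)\notin\operatorname{int}(P)$.

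The case of $-P$ is obtained by the involution $(I,J)\leftrightarrow(J,I)$ (equivalently, by negating every $\alpha_i$): its interior is exactly $\{d_1>0,\ d_2<0,\ d_3>0,\ d_4<0\}$, which by the same argument produces four alternating indices $q_1<q_2<q_3<q_4$ with $q_1,q_3\in I\setminus J$ and $q_2,q_4\in J\setminus I$, again contradicting weak separation.

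I expect no serious obstacle here: the argument is a short linear-algebra computation that identifies the relative interior of the pyramid with a sign pattern on $\phi(I)-\phi(J)$, followed by the standard ``four alternating elements'' obstruction to weak separation. The only subtlety worth flagging in the write-up is that ``interior'' has to be read as relative interior within the three-dimensional affine hull $\{x_1+x_2+x_3+x_4=k\}$, which is where $\phi(\C)$ lives anyway.
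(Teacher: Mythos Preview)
Your proof is correct and follows essentially the same route as the paper's: both reduce ``$\phi(I)$ is in the interior of $P$'' to the sign pattern $d_1<0,\ d_2>0,\ d_3<0,\ d_4>0$ on $\phi(I)-\phi(J)$, and then extract four cyclically alternating elements $q_1<q_2<q_3<q_4$ to contradict weak separation. Your write-up is in fact a bit more careful than the paper's --- you explicitly identify the relative interior with the sign pattern via the linear system and note the one-parameter freedom coming from $\alpha_1+\alpha_3=\alpha_2+\alpha_4$, whereas the paper only argues that some representation must have $t_1,t_3>0$ or $t_2,t_4>0$ --- but the underlying argument is the same.
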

\begin{proof}
Assume in contradiction that there is an element $I \in \C$ such that $\phi(I)$ lies in the interior of $P$. Therefore $\phi(I)=\phi(J)+\sum_{i=1}^4 t_i\alpha_i$ such that either $t_1,t_3>0$ or $t_2,t_4>0$ (or both). Indeed, otherwise if $t_i=t_{i+1}=0$ for some $i=1,2,3,4$ then $\phi(I)$ would lie on the boundary of $P$. Thus without loss of generality we may assume that $t_1,t_3>0$. Then $$\phi(I)_1<\phi(J)_1, \phi(I)_2>\phi(J)_2, \phi(I)_3<\phi(J)_3, \phi(I)_4>\phi(J)_4,$$ and hence there exist $i_2,i_4 \in I\setminus J$ and $j_1,j_3 \in J \setminus I$ such that\\ $j_1<i_2<j_3<i_4$. This contradicts the assumption that $\C$ is weakly separated. We can similarly prove the statement for $-P$, so we are done.
\end{proof}
\begin{lemma}\label{lem:octahedron}
Let $\C \subset {[n] \choose k}$ be a weakly separated collection, $S \in {[n] \choose k-2}$ and $a,b,c,d \notin S$ such that the following holds:
\begin{enumerate}
  \item $a<b<c<d$ (modulo $n$).
  \item $S \cup \{a,b\}, S \cup \{b,c\}, S \cup \{c,d\}, S \cup \{a,d\}, S \cup \{a,c\} \in \C$.
\end{enumerate}
Let $\hat{\C}$ be the collection that is obtained from $\C$ by applying a square move on $S \cup \{a,c\}$, so $\hat{\C}=(\C\setminus S \cup \{a,c\}) \cup (S \cup \{b,d\})$. Then the following holds:
\begin{enumerate}
  \item If the 4 numbers $a,b,c,d$ belong to the 4 different intervals $[1,x],[x+1,x+y],[x+y+1,x+y+z],[x+y+z+1,x+y+z+w]$ then
  \[\phi(S \cup \{b,d\})=\phi(S \cup \{a,c\})\pm (\alpha_1+\alpha_3)=\phi(S \cup \{a,c\})\pm(-1,1,-1,1).\]
  \item Otherwise, $\phi(\C)=\phi(\hat{\C})$.
\end{enumerate}
\end{lemma}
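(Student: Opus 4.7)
The lemma is a computational statement, so the plan is to reduce everything to a single observation: for any $x, y \in [n] \setminus S$, the vector $\phi(S \cup \{x,y\}) - \phi(S)$ has $\ell$-th coordinate equal to the number of elements of $\{x, y\}$ that lie in the $\ell$-th interval. Consequently $\phi(S \cup \{x,y\})$ is determined by $\phi(S)$ together with the multiset of interval-memberships of $\{x, y\}$. Let $i_1, i_2, i_3, i_4 \in \{1,2,3,4\}$ record the intervals containing $a, b, c, d$ respectively. Because $a < b < c < d$ cyclically and each interval is a cyclically contiguous block, the sequence $(i_1, i_2, i_3, i_4)$ is cyclically non-decreasing.

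For part~(1), the hypothesis that $a, b, c, d$ lie in four different intervals forces $(i_1, i_2, i_3, i_4)$ to be a cyclic shift of $(1,2,3,4)$. If $(i_1, i_2, i_3, i_4) = (1,2,3,4)$, then
\[\phi(S \cup \{b,d\}) - \phi(S \cup \{a,c\}) = (0,1,0,1) - (1,0,1,0) = (-1,1,-1,1) = \alpha_1 + \alpha_3.\]
Each rotation of the assignment by one cyclic position flips the sign of this difference, so the four admissible arrangements yield exactly $\pm(\alpha_1 + \alpha_3)$, as claimed.

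For part~(2), the failure of $a,b,c,d$ to meet four distinct intervals, combined with the cyclic monotonicity of $(i_1,i_2,i_3,i_4)$, forces at least one cyclically consecutive equality $i_j = i_{j+1}$ (indices mod~$4$). The plan is then to check each of the four subcases $i_1 = i_2$, $i_2 = i_3$, $i_3 = i_4$, $i_4 = i_1$ and show that both $\phi(S \cup \{a,c\})$ and $\phi(S \cup \{b,d\})$ coincide with $\phi$-values of sets lying in $\mathcal{C} \cap \hat{\mathcal{C}}$, specifically among the four ``unchanged'' sets $S \cup \{a,b\}, S \cup \{b,c\}, S \cup \{c,d\}, S \cup \{a,d\}$. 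For example, when $i_1 = i_2$ the elements $a$ and $b$ lie in the same interval, so swapping them does not change $\phi$, yielding $\phi(S \cup \{a,c\}) = \phi(S \cup \{b,c\})$ and $\phi(S \cup \{b,d\}) = \phi(S \cup \{a,d\})$. The cases $i_2 = i_3$, $i_3 = i_4$, $i_4 = i_1$ are entirely analogous, each simultaneously pairing both $\phi$-values with two of the unchanged sets. This shows that replacing $S \cup \{a,c\}$ by $S \cup \{b,d\}$ leaves the set $\phi(\mathcal{C})$ unaffected.

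The only real obstacle is bookkeeping: for each of the four subcases in part~(2) one must correctly identify which of the four unchanged sets absorbs $\phi(S \cup \{a,c\})$ and which absorbs $\phi(S \cup \{b,d\})$. Once this small table is written out, the statement reduces to elementary arithmetic on indicator vectors.
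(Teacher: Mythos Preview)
Your proposal is correct and follows essentially the same approach as the paper: both proofs reduce part~(2) to a small case analysis on the interval memberships of $a,b,c,d$, showing in each case that $\phi(S\cup\{a,c\})$ and $\phi(S\cup\{b,d\})$ each coincide with the $\phi$-value of one of the four unchanged sets $S\cup\{a,b\},S\cup\{b,c\},S\cup\{c,d\},S\cup\{a,d\}$. The paper enumerates cases by the overall pattern of interval memberships (all four in one interval, three-and-one, two-and-two, two-one-one), while you enumerate by which cyclically adjacent pair lies in the same interval; these are just two ways of slicing the same finite check.
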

\begin{proof}
The first claim is clear, so we consider the second claim. Without loss of generality, there are 4 cases that we need to check:
\begin{enumerate}
  \item $a,b,c,d$ belong to the same interval. In this case $$\phi(S \cup \{a,c\})=\phi(S \cup \{b,d\}).$$
  \item $a,b,c \in [1,x]$ and $d \notin [1,x]$. Then $$\phi(S \cup \{a,c\})=\phi(S \cup \{b,c\}) \textrm{ and } \phi(S \cup \{b,d\})=\phi(S \cup \{c,d\}).$$
  \item $a,b\in [1,x]$ and $c,d\in [x+1,x+y]$. Then $$\phi(S \cup \{a,c\})=\phi(S \cup \{b,c\}) \textrm{ and } \phi(S \cup \{b,d\})=\phi(S \cup \{a,d\}).$$
  \item $a,b\in [1,x]$, $c\in [x+1,x+y]$, $d\in [x+y+1,x+y+z]$. Then $$\phi(S \cup \{a,c\})=\phi(S \cup \{b,c\}) \textrm{ and } \phi(S \cup \{b,d\})=\phi(S \cup \{a,d\}).$$
\end{enumerate}
In all the cases, $\phi(\C)=\phi(\hat{\C})$ so we are done.
\end{proof}
We are now ready to present the proof of Theorem~\ref{thm:twointervals}. Throughout the proof, we let $\phi:=\phi_{p_1,p_2,p_3,p_4}$.
\begin{proof}
Let $\C,\hat{\C} \in {[2k] \choose k}$ be two maximal weakly separated collections that contain $A$ and $\AA$ respectively (since $A$ and $\AA$ are not weakly separated, $\C \neq \hat{\C}$). Denote by
\begin{equation}\label{eq:mutationsequence}
\C_0=\C \rightarrow \C_1 \rightarrow \C_2 \rightarrow \ldots \rightarrow \C_m = \hat{\C}
\end{equation}
the shortest sequence of square moves that transforms $\C$ into $\hat{\C}$. Note that $\phi(A)=(p_1,0,p_3,0), \phi(\AA)=(0,p_2,0,p_4)$ and $p_1+p_3=p_2+p_4=k$. Since rotating $A$ and $\AA$ (modulo $2k$) and switching between them has no effect on the result, we can assume without loss of generality that $0 < p_2,p_3,p_4 \leq p_1$ (and hence $p_3 \leq p_1,p_2,p_4$).
Let $P$ and $Q$ be the square pyramids consisting of the points $\phi(A)+\sum_{i=1}^4 t_i\alpha_i$ and $\phi(\AA)+\sum_{i=1}^4 t_i(-\alpha_i)$ respectively, such that $t_i \geq 0$ for all $i$. We use $\alpha_i$'s from Lemma~\ref{lemma:abovepyramid}.
By this lemma, all the points in $\phi(\C)$ cannot lie in the interior of $P$ (so they lie either on the surface or outside the pyramid). Similarly, all the points in $\phi(\hat{\C})$ cannot lie in the interior of $Q$. Recall that $p_1-p_4+p_3=p_2, p_4 \leq p_1$, and hence
$$\phi(\AA)=\phi(A)+(p_1-p_4)\alpha_3+p_4\alpha_4+p_3\alpha_2$$
and $\phi(\AA)$ lies in the interior of the pyramid $P$.
Therefore, the interior of $P\cap Q$ does not contain points of $\phi(\C)$ or $\phi(\hat{\C})$. Let $Z$

be the set of points that lie in the intersection of the interior of $Q$ with the set
$$\{x \mid x=\phi(A)+t_i\alpha_i+t_{i+1}\alpha_{i+1} \textrm{ for } i \in \{1,2,3,4\}, 0 \leq t_i,t_{i+1} \in \mathbb{Z}\}$$
(we define $i+1=1$ for $i=4$). Note that $Z$ consists of all the "integral" points on the boundary of $P$, that lie in the interior of $Q$ (an integral point is a point for which all the four $t_i$'s are integers).

Lemma~\ref{lem:octahedron} implies that for any $0 \leq i \leq m-1$, either $\phi(\C_i)=\phi(\C_{i+1})$, or $\phi(\C_{i+1})$ is obtained from $\phi(\C_i)$ by taking a point $O \in \phi(\C_i)$ and adding to it (or subtracting form it) $\alpha_1+\alpha_3=(-1,1,-1,1)$. By the lemma, the latter option is possible only if all the four points $O+\alpha_1, O+\alpha_2, O+\alpha_3, O+\alpha_4$ (or all the four points $O-\alpha_1, O-\alpha_2, O-\alpha_3, O-\alpha_4$) are in $\phi(\C_i)$. Recall that the image of $\phi$ lies in the 3-dimensional space $F$ that formed by the vectors in $\mathbb{R}^4$ whose sum is $k$. Let $N$ be the plane that is orthogonal to the line $t(-1,1,-1,1)$ in $F$. Finally, let $g:F \rightarrow N$ be the orthogonal projection onto $N$. By Lemma~\ref{lem:octahedron} and the discussion above,
\begin{equation}\label{eq:projection_g}
g(\phi(\C_i))=g(\phi(\C_j)) \textrm{ for all } 0 \leq i < j \leq m.
\end{equation}
In addition, since $\alpha_1+\alpha_3=\alpha_2+\alpha_4=(-1,1,-1,1)$, then for a point $O$ as above we have
$$g(\{O+\alpha_1, O+\alpha_2, O+\alpha_3, O+\alpha_4\})=g(\{O-\alpha_1, O-\alpha_2, O-\alpha_3, O-\alpha_4\}).$$
This holds since $O-\alpha_1+(-1,1,-1,1)=O+\alpha_3, O-\alpha_2+(-1,1,-1,1)=O+\alpha_4$, and $N$ is orthogonal to $(-1,1,-1,1)$.
Consider now the sequence in (\ref{eq:mutationsequence}). Clearly, $g(\phi(A))\in g(\phi(\C)) \cap g(Z)$. The set $A$ must be mutated at some stage in the sequence, since $ A \notin \C_m$. Therefore, from the discussion above, each one of $g(\phi(A)+\alpha_1)$, $g(\phi(A)+\alpha_2)$, $g(\phi(A)+\alpha_3)$ and $g(\phi(A)+\alpha_4)$ must lie in $g(\phi(\C_j))$ for some $j$. By (\ref{eq:projection_g}) we get that each one of them lies in $g(\phi(\C))$, and therefore corresponds to an element $E_i$ in $\C$ (for $i=1,2,3,4$ respectively). For each such $i$, if $g(\phi(E_i)) \in g(Z)$ then $E_i$ lies above (or on the surface) of $P$, and does not belong to $\C_m$. Therefore it should be mutated at some stage in (\ref{eq:mutationsequence}). Hence applying to $E_i$ the same argument that we applied to $A$ leads to the fact that each one of $g(\phi(E_i)+\alpha_1)$, $g(\phi(E_i)+\alpha_2)$, $g(\phi(E_i)+\alpha_3)$ and $g(\phi(E_i)+\alpha_4)$ lies in $g(\phi(\C))$. Continuing this way we get that $g(Z) \subset g(\phi(\C))$. Let $g(Z)=\{x_1,\ldots,x_l\}$. From the definition of $Z$, $g$ acts injectively when restricted to $Z$, and hence
$l=|Z|$.
Therefore, there are at least $l$ elements that lie in $\C$ and do not lie in $\hat{\C}$. We will now show that
\begin{equation}\label{eq:l}
l=1+k^2-2k-\sum_{i=1}^{4} {p_i\choose 2}.
\end{equation}
Note that this would imply that the number of elements in $\C \cap \hat{\C}$ is at most \[k^2+1-\Big(1+k^2-2k-\sum_{i=1}^{4} {p_i\choose 2}\Big)=2k+\sum_{i=1}^{4} {p_i\choose 2},\]
and since this holds for any pair of maximal weakly separated collections $\C,\hat{\C}$ for which $A \in \C,\AA \in \hat{\C}$, this implies the first part of (1) in Theorem~\ref{thm:twointervals}. In order to show that this bound is tight, we can use the same construction as in the proof of Theorem~\ref{thm:main}, and that will conclude the proof of (1). Let us now prove equation (\ref{eq:l}). As the cases $p_2 \leq p_4$ and $p_2 \geq p_4 $ can be handled similarly, we will assume without loss of generality that $p_2 \geq p_4$. First, observe that the structure of $g(Z)$ is the interior portion of the shape in Figure~\ref{fig:interiorP1}. For specific examples see Figures~\ref{fig:P4323andP6314} and ~\ref{fig:rotations}. We will count the number of integer points in the interior of this shape. The interior of the square with $p_3$ side length contains $x_1=4{p_3+1 \choose 2}-4p_3+1$ integer points. The remainder parts of the right and the left sections contain $x_2=p_3(p_4-p_3)$ and $x_3=p_3(p_2-p_3)$ integer points respectively. The remainder part of the bottom section contains $x_4=2{p_2 \choose 2}-{p_3 \choose 2}-{p_3-1 \choose 2}+(p_2-1)(p_4-p_2-1)$ points. Note that for the calculation of $x_4$, we added two "large" triangles (with $p_2$ base), subtracted two "small" triangles (with $p_3$ base), and added the parallelogram with sides $p_2$ and $p_4-p_2$. Thus,
$l=x_1+x_2+x_3+x_4$. We need to show that this sum equals the RHS of (\ref{eq:l}). First, rewrite the RHS of (\ref{eq:l}) as $4\Big(\frac{k-1}{2}\Big)^2-\sum_{i=1}^{4}\frac{p_i(p_i-1)}{2}$. Thus
\begin{equation}
\begin{aligned}
RHS ={ }{ } & \Big(\frac{p_1}{2}+\frac{p_3-1}{2}\Big)^2+\Big(\frac{p_3}{2}+\frac{p_1-1}{2}\Big)^2+
\Big(\frac{p_2}{2}+\frac{p_4-1}{2}\Big)^2+ \\
      & +\Big(\frac{p_4}{2}+\frac{p_2-1}{2}\Big)^2-2\sum_{i=1}^{4}\frac{p_i(p_i-1)}{4}.\\
\end{aligned}
\end{equation}
Expanding the expression above, we get
\begin{equation}
\begin{aligned}
RHS ={ }{ } & \sum_{i=1}^{4} \Big(\frac{p_i}{2}\Big)^2+\Big(\frac{p_i-1}{2}\Big)^2-\frac{2p_i(p_i-1)}{4}+\\
      & + \frac{p_1(p_3-1)}{2}+\frac{p_3(p_1-1)}{2}+\frac{p_2(p_4-1)}{2}+\frac{p_4(p_2-1)}{2},\\
\end{aligned}
\end{equation}
which equals
$$\sum_{i=1}^{4} \Big(\frac{p_i}{2}-\frac{p_i-1}{2}\Big)^2+p_1p_3+p_2p_4-\frac{p_1+p_2+p_3+p_4}{2}= 1+p_1p_3+p_2p_4-k.$$

Therefore, we need to show that $x_1+x_2+x_3+x_4=1+p_1p_3+p_2p_4-k$. Observe that
\[x_1+x_2+x_3+x_4-(1+p_1p_3+p_2p_4-p_2-p_4)=p_3(p_2+p_4-p_1-p_3)=p_3(k-k)=0,\]
and we are done with (1). Part (2) follows directly from part (1). For part (3), from the proof of part (1) we get that $D(A,\AA)$ is bounded from below by the number of integral points in the interior of $P \cap Q$. This looks like a cuboid with sides $p_2,p_3,p_4$ with pyramids removed from opposite sides, resulting in $p_2p_3p_4-2{p_3+1 \choose 3}$ integral points. In order to show that this bound is tight, one needs to construct a pair of weakly separated sets $C,\hat{C}$ that satisfy the following properties: $A \in C, \AA \in \hat{C}$, and $\phi(C)$ ($\phi(\hat{C})$) contains the surface of $P$ ($Q$) that lies in the interior of $Q$ ($P$). This can be done easily using plabic graphs that have a form of a honeycomb (see section 12 in \cite{FP}), so we are done.

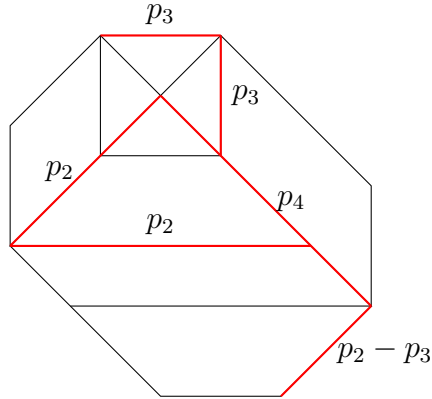
\begin{figure}[h]
\centering
\begin{tikzpicture}[scale=0.4]
\draw (5,0) -- (9,0) -- (12,3) -- (12,7) -- (7,12) -- (3,12) -- (0,9) -- (0,5) --cycle;
\draw (3,12) -- (12,3) -- (2,3);
\draw (7,12) -- (0,5) -- (10,5);
\draw (3,12) -- (3,8) -- (7,8) -- (7,12);
\draw[red,thick] (3,12) -- (7,12) node[midway,above,black] {$p_3$};
\draw[red,thick] (7,8) -- (7,12) node[midway,right,black] {$p_3$};
\draw[red,thick] (0,5) -- (5,10) node[midway,left,black] {$p_2$};
\draw[red,thick] (0,5) -- (10,5) node[midway,above,black] {$p_2$};
\draw[red,thick] (5,10) -- (12,3) node[midway,right,black] {$p_4$};
\draw[red,thick] (12,3) -- (9,0) node[midway,right,black] {$p_2-p_3$};

\end{tikzpicture}
\caption{The length of the red lines is denoted by the symbol next to them. By \emph{length} we mean the number of integer points in the red segment+1.}
\label{fig:interiorP1}
\end{figure}

\begin{figure}[h]
\centering
\includegraphics[height=2in]{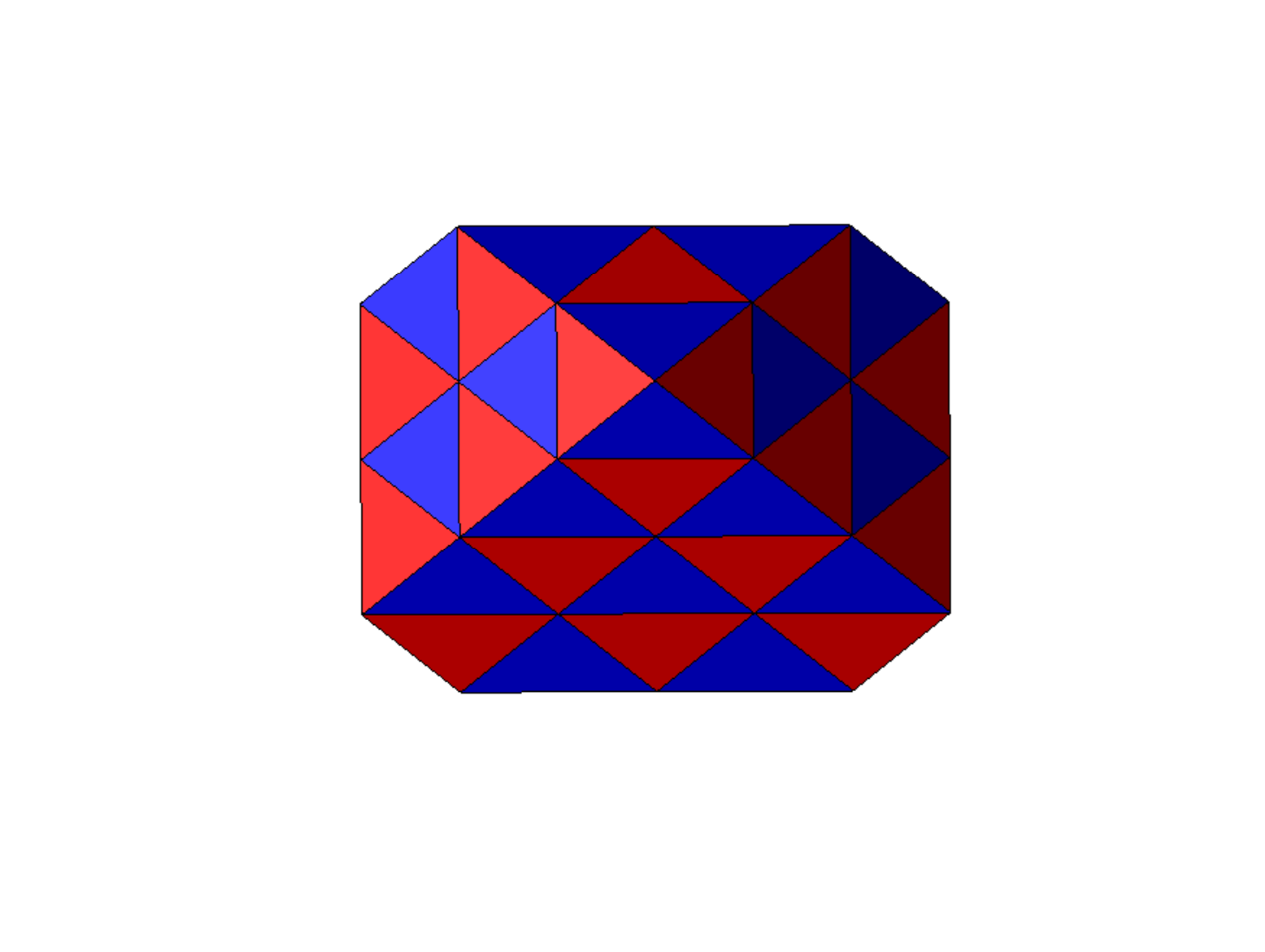}%
\includegraphics[height=2in]{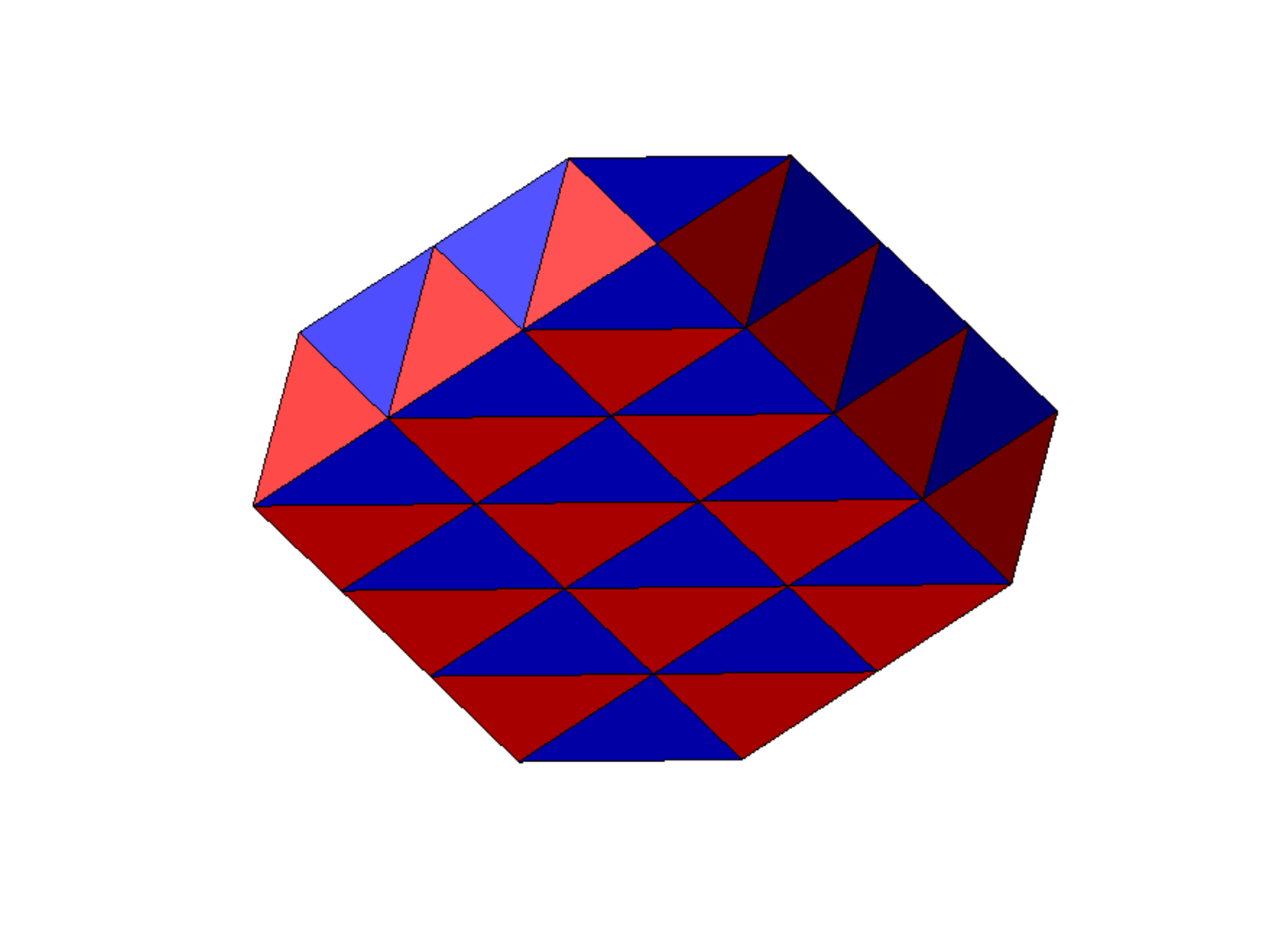}%
\caption{The left figure corresponds to the case $p_1=4,p_2=3,p_3=2,p_4=3$. The right Figure corresponds to the case $p_1=6,p_2=3,p_3=1,p_4=4$. Note that these figures match the description in Figure~\ref{fig:interiorP1}. For example, the top horizontal line has $p_3+1$ integer points, as indicated in the description of Figure~\ref{fig:interiorP1}.}
\label{fig:P4323andP6314}
\end{figure}

\begin{figure}[h]
\centering
\includegraphics[height=1.1in]{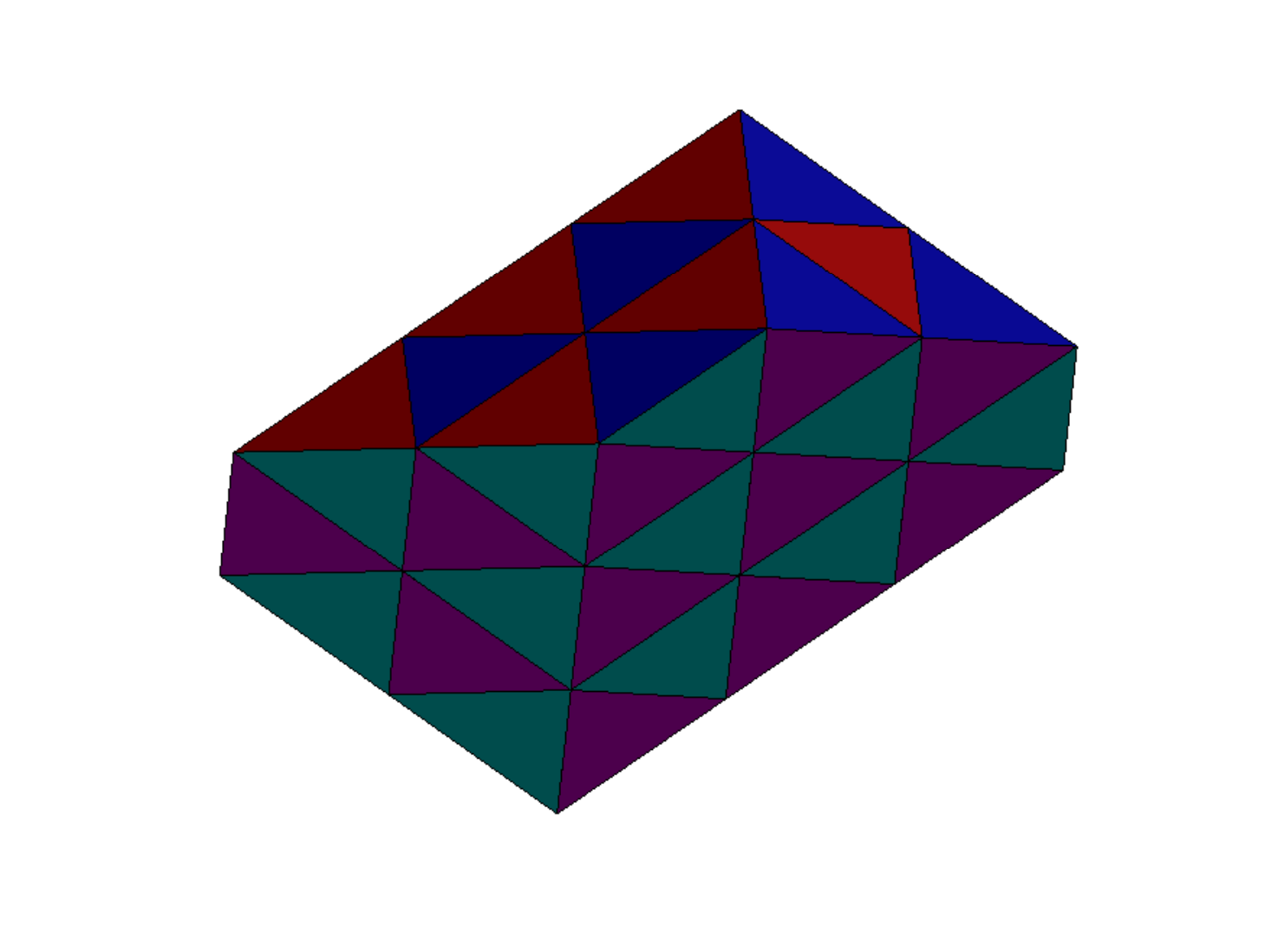}%
\includegraphics[height=1.1in]{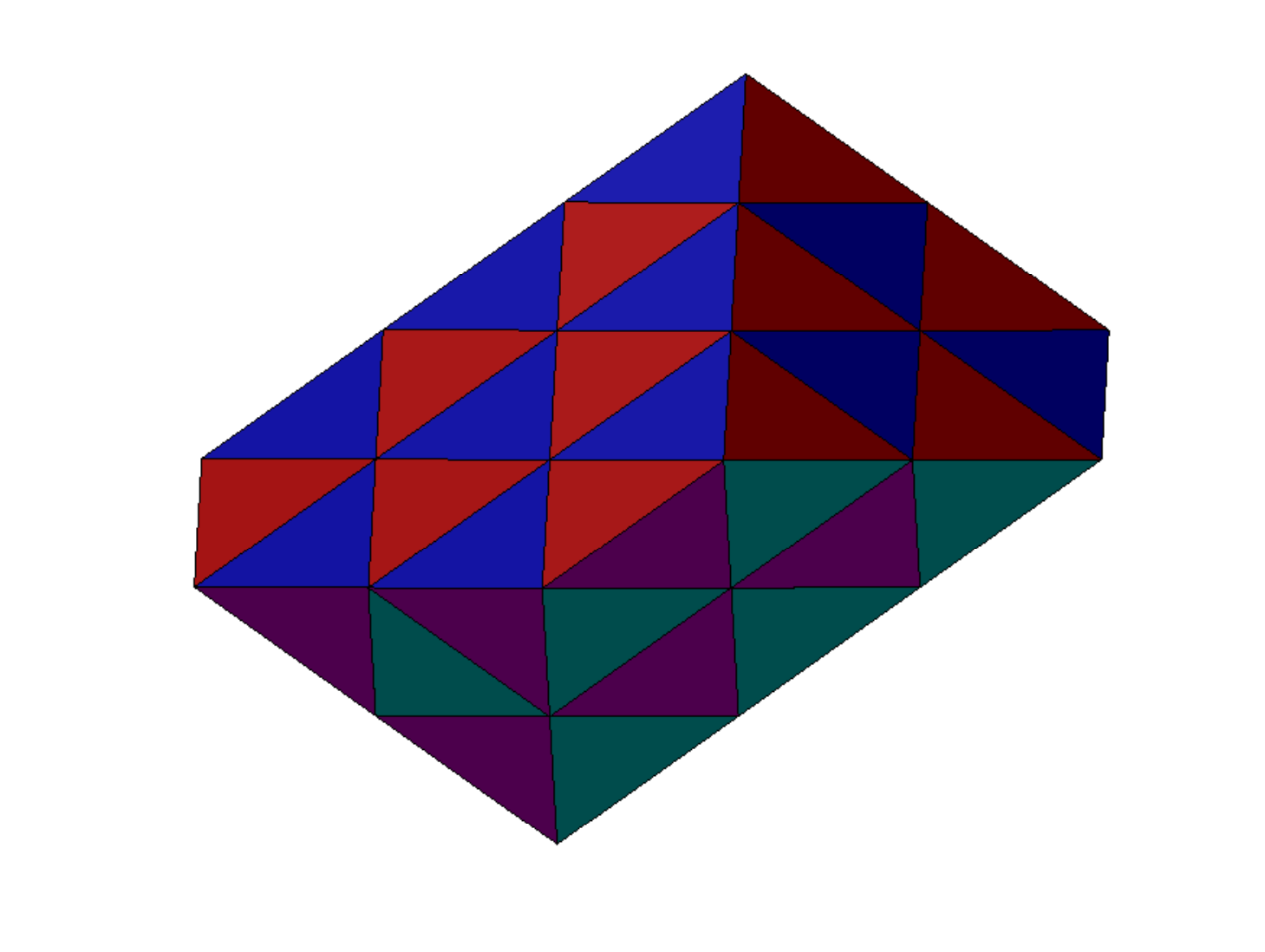}%
\includegraphics[height=2in]{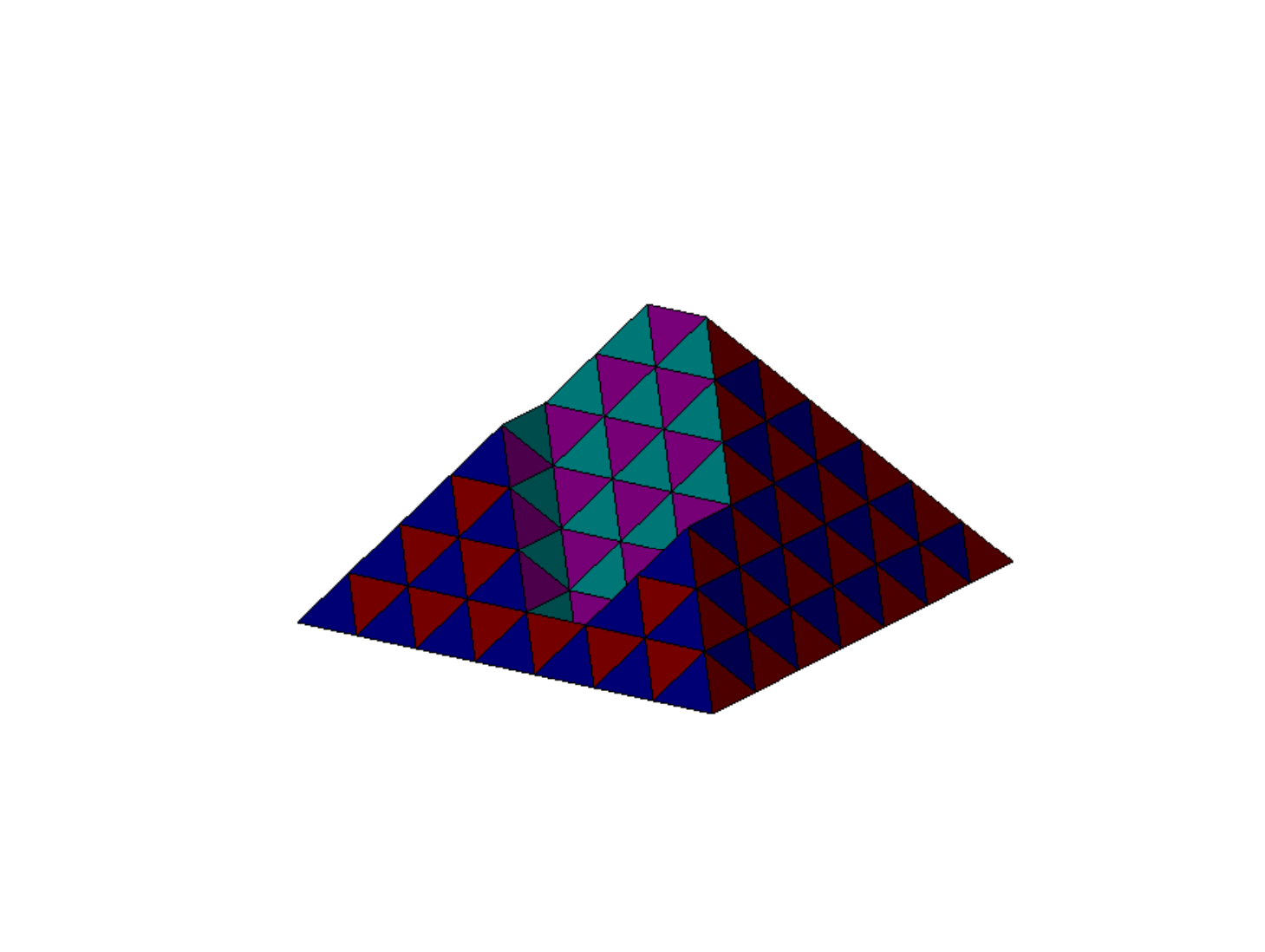}%
\caption{The left two figures are rotations of the shape formed by the intersection of the pyramids $P$ and $Q$ for the case $p_1=4,p_2=3,p_3=2,p_4=3$. The right figure shows the remainder of the pyramid $P$ after we removed the portion which intersects with $Q$ for the case $p_1=6,p_2=3,p_3=1,p_4=4$.}
\label{fig:rotations}
\end{figure}

\end{proof}
We conclude this section with the proof of Theorem~\ref{thm:lowerboununbalanced}.
\begin{proof}[Proof of Theorem~\ref{thm:lowerboununbalanced}]
In order to prove this theorem, it is enough to construct a weakly separated collection in $\A_{A,\AA}$ of cardinality (\ref{eq:unbalanced}). Let $S_i=[s,\ldots,s+p_i-1]$ be an interval in the partition of the circle associated with $A$, and let $x<y \in S_i$. Define $T_i$ to be the collection of all the $k$-tuples of the form $[x-k+s+p_i-y,x-1] \cup [y,s+p_i-1]$. Note that $|T_i|={p_i \choose 2}$. Now, for any $i<j$ such that $i \neq j-1$, let $[s,s+a_i-1], [t,t+b_j-1]$ be the two corresponding intervals in the partition of the circle associated with $A$. Define $M_{ij}$ to be the collection of all $k$-tuples of the form $[x,s+a_i-1] \cup [y,t+b_j-1]$ such that $x \in [s,s+a_i-1], y \in [t,t+b_j-1]$. Note that there are $\chi_{i,j}(a_i+b_j-k+1)$ such $k$-tuples. Finally, for all $i$ denote by $B_i$ the interval $[i,i+k-1]$. In order to prove the theorem, it is enough to show that
\[\C=\left(\cup_{i}T_i\right) \bigcup \left(\cup_{ij}M_{ij}\right) \bigcup \left(\cup_{i}B_i\right)\]
is a weakly separated collection in $\A_{A,\AA}$ (note that from the definition of $T_i, M_{ij}$ and $B_i$, all the $k$-tuples in the union are different). First, it is easy to see that every $k$-tuple in $\C$ is weakly separated from both $A$ and $\AA$. This holds since for any $I \in \C$, $I \setminus A$ is either an interval, or a union of two intervals such that no element of $A \setminus I$ lies in between them (from one of the two sides). The same holds for $\AA$. Hence $\C \subset \A_{A,\AA}$. We now need to show that $\C$ is weakly separated. Now, if $M_{ij}$ is nonempty for some pair $i<j, i \neq j-1$, then $a_i+b_j \geq k$. This implies that for all other nonempty sets of the form $M_{uv}$, either we always have $u=i$, or we always have $v=j$. From here, by simple case analysis, it is easy to verify that $\C$ is indeed weakly separated, and we are done.
\end{proof}

\subsection*{Acknowledgments}
We thank Alex Postnikov for introducing us into the subject and a lot of fruitful conversations.

\end{document}